\allowdisplaybreaks \theoremstyle{definition}
\newtheorem{Definition}{Definition}
\newtheorem{Remark}{Remark}
\newtheorem{Example}{Example}
\theoremstyle{plain}
\newtheorem{Theorem}{Theorem}
\newtheorem{Corollary}{Corollary}
\newtheorem{Proposition}{Proposition}
\title{Metric geometry of nonregular
weighted Carnot-Carath\'eodory spaces}
\author{Svetlana Selivanova\footnote{This work was partially supported from the Integration project SB RAS -- DWO RAS, No. 56}}
\begin{document}
\large
\date{}
 \maketitle
\begin{abstract}
We investigate local and metric geometry of weighted
Carnot-Carath\'eodory spaces which are a wide generalization of
sub-Riemannian manifolds and arise in nonlinear control theory,
subelliptic equations etc. For such spaces the intrinsic
Carnot-Carath\'eodory metric might not exist, and some other new
effects take place. We describe the local algebraic structure of
such a space, endowed with a certain quasimetric (first introduced
by A. Nagel, E.M. Stein and S. Wainger), and compare local
geometries of the initial C-C space and its tangent cone at some
fixed (possibly nonregular) point. The main results of the present
paper are new even for the case of sub-Riemannian manifolds.
Moreover, they yield new proofs of such classical results as the
Local approximation theorem and the Tangent cone theorem, proved
for H\"ormander vector fields by M. Gromov, A.Bellaiche,
J.Mitchell etc.

 {\bf Key words}:
Carnot-Carath\'{e}odory space, weighted vector fields, local
tangent cone, quasimetric, nonregular points

{\bf MSC}: Primary   53C17, 49J20; Secondary 51F99, 35H20.
\end{abstract}

\section{Introduction}

We investigate local and metric geometry of a general class of
Carnot-Carath\'eodory spaces (see Definition \ref{cc-space}) which
generalize classical sub-Riemannian manifolds (see e.g. \cite{bel,
herm,jean, gro,lan,mit,mon} and references therein) and naturally
arise in different areas, in particular, geometric control theory,
harmonic analysis and subelliptic equations.

As it is well-known, if $X_1,X_2,\ldots,X_m$ are smooth
``horizontal'' vector fields on a smooth connected manifold
$\mathbb M$ ($\operatorname{dim}\mathbb M=N,\ m\leq N$), a
necessary and sufficient condition for a system
\begin{equation}\label{lin}\dot{x}=\sum\limits_{i=1}^ma_iX_i(x)
\end{equation} to be locally controllable
is that $X_1,X_2,\ldots,X_m$ span, together with their
commu\-ta\-tors up to some finite order $M$, the tangent space
$T_v\mathbb M$ at any point $v\in\mathbb M$ (H\"{o}rmander's
condition \cite{hor}), i.e. define a sub-Riemannian geometry on
$\mathbb M$. The existence of a controllable ``horizontal'' path,
joining two arbitrary points $v,w\in\mathbb M$, is equivalent to
the Rashevsky-Chow connectivity theorem \cite{Chow,Rash}. This
theorem implies existence of an intrinsic Carnot-Carath\'eodory
metric $d_c(v,w)$ defined as the infimum of lengths of all
horizontal curves (with their tangent vectors belonging to the
subbundle $H\mathbb M=\operatorname{span}\{X_1,X_2,\ldots,X_m\}$)
joining $v$ and $w$. Investigation of local geometry of
sub-Riemannian manifolds is important e.g. for constructing
optimal motion planning algorithms for \eqref{lin} and studying
their complexity \cite{bel,bian,herm,jean,jean1,jean2}. In
particular, investigation of the algebraic structure of the
tangent cone (in Gromov's sense \cite{bur,gro1,gro,gro2})
$(\mathbb M, d_c^u)$ to the metric space $(\mathbb M,d_c)$ plays
here a crucial role, as well as obtaining estimates on comparison
of the metrics $d_c$ and $d_c^u$. In contrast to the Riemannian
case they are not bilipshitz-equivalent, but the following
estimate holds:

{\bf Theorem} (Local approximation theorem
\cite{bel,gro,jean,vk1}). If, for $u,v\in\mathbb M$,
$d_c(u,v)=O(\varepsilon)$ and $d_c(u,w)=O(\varepsilon)$, then
$|d_c(v,w)-d_c^u(v,w)|=O(\varepsilon^{1+\frac{1}{M}})$, where $M$
is the depth of the sub-Riemannian manifold $\mathbb M$.

If the dependence of the right-hand part of a control system is
nonlinear on the control functions  (see \cite{agmar,cor} and
references therein):
\begin{equation}\label{nonlin}\begin{cases}\dot{x}=f(x,a),\\x(0)=
x_0,\end{cases}\end{equation} where $x\in \mathbb M,a\in\mathbb
R^m$,  then a sufficient (but not necessary) controllability
condition is that
$$\operatorname{span}\bigl\{h(0):h\in\operatorname{Lie}
\frac{\partial^{|\alpha|}} {\partial
a^{\alpha}}f(0,\cdot),\alpha\in\mathbb N^M \bigr\}=T_{x_0}\mathbb
M$$ for some $M\in\mathbb N$. Letting
$$F_{\nu}=\bigl\{\frac{\partial^{\alpha}}{\partial
a^{\alpha}}f(0,\cdot):|\alpha|\leq\nu\bigr\}$$ and
$$H_k(q)=\operatorname{span}\{[f_1,[f_2,\ldots,[f_{i-1},f_i]
\ldots](q):f_j\in
F_{\nu_j}, \nu_1+\nu_2+\ldots+\nu_i\leq k, i>0\},$$ one obtains a
weighted filtration $$\{0\}\subseteq H_1\subseteq
H_2\subseteq\ldots\subseteq H_M=T\mathbb M, \text{ such that }
[H_i,H_j]\subseteq H_{i+j},$$ of the tangent bundle. The condition
of having such a filtration is obviously weaker than the
H\"{o}rmander's condition, and in this case it may happen that not
all points can be joined by a horizontal path (see Example 2),
i.e. the Rashevsky-Chow theorem fails to hold and the intrinsic
metric $d_c$ might not exist.

Other examples, where weighted Carnot-Carath\'eodory spaces
appear, stem from the theory of subelliptic equations
\cite{bram,cnsw,mm,nsw}. Besides weakening the H\"ormander's
condition, an important line of generalization of sub-Riemannian
geometry is minimizing the smoothness assumptions on the vector
fields $X_i$ generating the space (see e.g.
\cite{bram,bram1,gre,karm,karm1,
karm2,vk,morbid,morbid2,dan,sus,sus1,str,Tao,vk1,vk2}).

In this paper we consider the following notion of a weighted
Carnot-Carath\'eodory space (this definition is close to the one
of the paper \cite{cnsw}). A smooth manifold $\mathbb M$  will be
called a {\it (weighted) Carnot-Carath\'eodory space} (shortly,
C-C space) if there are $C^{2M+1}$-smooth vector fields
$X_1,X_2,\ldots,X_q$ given on an area $U\subseteq\mathbb M$ (the
number $M$ is defined below), endowed with formal weights
 $\operatorname{deg}(X_i)=d_i$, $1\leq d_1\leq
d_2\leq\ldots\leq d_q$, $d_j\in \mathbb N$, with the following
properties. It is assumed that
$\operatorname{span}\{X_I(v)\}_{|I|_h\leq M}=T_v\mathbb M$ for all
$v\in U$ and some $M\in\mathbb N$, where
$$\operatorname{deg} X_I=|I|_h=d_{i_1}+\ldots+d_{i_k}$$ is the
homogeneous degree of the commutator
$X_I=[X_{i_1},[\ldots,[X_{i_{k-1}},X_{i_k}]\ldots]$. Letting
$H_j=\operatorname{span}\{X_I\}_{|I|_h\leq j}$ we get a weighted
filtration of the tangent bundle $H_1\subseteq
H_2\subseteq\ldots\subseteq H_M=T\mathbb M,$  which meets the
property $ [H_i,H_j]\subseteq H_{i+j}. $ A point $u\in U$  is called
{\it regular} if there is a neighborhood of $u$ in which the
dimensions of all $H_k$ are constant; otherwise this point is called
{\it nonregular}.

This notion of a C-C space is suitable to describe nonlinear
control systems \eqref{nonlin}. One of the peculiarities stemming
from the presence of a formal degree structure is that different
choices of weights may lead to different distributions of regular
and nonregular points on the space (see Example 1).

Because of the mentioned difficulties, new methods for studying
local geometry of such spaces are needed. In particular, since the
metric $d_c$ might not exist, we obtain all estimates w.r.t. the
following distance function, first introduced in \cite{nsw}, which
is actually not a metric, but a quasimetric, i.e. the triangle
inequality holds only in the generalized sense, with some
constant.
$$\rho(v,w)=\inf\Bigl\{\delta>0\mid \text{ there is a curve }
 \gamma:[0,1]\to U\text{ such that }$$
$$\gamma(0)=v, \gamma(1)=w, \dot{\gamma}(t)=\sum\limits_{|I|_h\leq
M} w_I X_I(\gamma(t)), |w_I|<\delta^{|I|_h}\Bigr\}.$$

A crucial result on local geometry, which we prove in Section 5,
is the estimate on comparison of this quasimetric w.r.t. the
initial vector fields and the quasimetric $\rho^u$ (see Section
3), induced in by their nilpotentizations $\widehat{X}_i^u$ at a
point $u$, which is possibly nonregular.

{\bf Theorem} (Theorem on divergence of integral lines) If $u,v\in
U$, $\rho(u,v)=O(\varepsilon)$ and $r=O(\varepsilon)$, then we
have
$$R(u,v,r)=O(\varepsilon^{1+\frac{1}{M}}),$$
where
$$R(u,v,r)=\max\{\underset{\widehat{y}\in
B^{\rho^u}(v,r)}\sup\{\rho^u(y, \widehat{y})\},\underset{y\in
B^{\rho}(v,r)}\sup\{\rho(y, \widehat{y})\}\}.$$ Here the points
$y$ and $\widehat{y}$ are defined as follows. Let $\gamma(t)$ be
an arbitrary curve such that
$$\begin{cases}\dot{\gamma}(t)=\sum\limits_{|I|_h\leq M}b_I
\widehat{X}^u_I(\gamma(t)),\\
\gamma(0)=v, \gamma(1)=\widehat{y},
\end{cases}$$
and $$\rho^u(v,\widehat{y})\leq\underset{|I|_h\leq
M}\max\{|b_I|^{1/|I|_h}\}\leq r.$$ Define
$y=\text{exp}(\sum\limits_{|I|_h\leq M}b_I X_I)(v)$. In this way,
the supremum is taken not only over $\widehat{y}\in
B^{\rho^u}(v,r)$, but also over the infinite set of admissible
$\{b_I\}_{|I|_h\leq M}$.

This theorem allows construction of motion planning algorithms for
the system \eqref{nonlin} like it was done for \eqref{lin} in
\cite{bel,bian,herm,jean,jean1}, and to prove an analog of the
local approximation theorem, as well as to study the algebraic
structure of the tangent cone.

{\bf Theorem} (Local approximation theorem). For any points $u\in
U$ and $v,w\in U$, such that $\rho(u,v)=O(\varepsilon)$,
$\rho(u,w)=O(\varepsilon)$, we have
$$|\rho(v,w)-\rho^u(v,w)|=O(\varepsilon^{1+\frac{1}{M}}).$$

{\bf Theorem} (Tangent cone theorem). The quasimetric space
$(U,\rho^u)$ is a local tangent cone at the point $u$ to the
quasimetric space $(U,\rho)$. The tangent cone is a homogeneous
space $G/H$, where $G$ is a nilpotent graded group with a weight
structure.

This theorem, see Section 6, generalizes an analogous fact for
sub-Riemannian manifolds, known as Mitchell's cone theorem.
Namely, it is known that, at a regular point, the tangent cone to
a sub-Riemannian manifold is a nilpotent stratified group
\cite{gro,mit}, while at a nonregular point it is a homogeneous
space \cite{bel,jean}.

The notion of the tangent cone to a quasimetric space, extending
the Gromov's notion for metric spaces, was introduced and studied
recently in \cite{dan,smj}. Note that a straightforward
generalization of the Gromov-Hausdorff convergence theory would
make no sense for quasimetric spaces, since the Gromov-Haussdorff
distance between any two quasimetric spaces would be equal to
zero. However, such generalization can be done for particular
classes of compact quasimetric spaces \cite{gre}.

All of the mentioned three results are new even for the case of
``classical'' sub-Riemannian manifolds; moreover, methods of their
proofs allow to prove in a new way the classical results for
sub-Riemannian manifolds (see Section 7). In particular, in
contrast to the proof of the Local approximation theorem in
\cite{bel}, we do not need special polynomial ``privileged''
coordinates  and do not use Newton-type approximation methods.

The proofs of the main results of this paper heavily rely on
results of \cite{vk,vk1} for the case of regular C-C spaces, see
Definition \ref{cc-space-reg}, and on methods of submersion of a
C-C space into a regular one, \cite{rs,bel,cnsw,herm,jean}, as
well as on obtaining new geometric properties for the quasimetrics
$\rho$ and $\rho^u$ (Section 4).

This paper is essentially an extended version of the short notes
\cite{dan,izv}.

I am deeply grateful to Professor Sergey Vodopyanov for suggesting
me this problematic and many fruitful discussions. I am also
grateful to Doctor Maria Karmanova for a consultation on her paper
\cite{karm1}.

\section{Basic definitions, examples and known facts}

Recall that locally any vector field $X_i$ on a manifold $\mathbb
M$ can be viewed as a first-order differential operator
$X_i=\sum\limits_{j=1}^Na_{ij}(x)\frac{\partial}{\partial x_j}$
acting on a function $f\in C^{\infty}(\mathbb M)$, and its
smoothness coincides with the smoothness of the coordinate
functions $a_{ij}(x)$. A commutator of two vector fields is a
vector field defined as $[X_i,X_j]=X_iX_j-X_jX_i$.

In this paper we will use the following definition of a weighted
Carnot-Carath\'eodory space (this definition is very close to the
one formulated in \cite{cnsw}). It is easy to see that this
definition can be reformulated in such a way that it involves only
first-order  (not higher-order) commutators
 of the vector fields $X_1,X_2,\ldots, X_q$
and thus can be applied to the case of $C^1$-smooth vector fields.

\begin{Definition}\label{cc-space} Let $X_1,X_2,\ldots,X_q \in
$ be $C^{2M+1}$-smooth vector fields given on an area $U$ in a
connected $C^{\infty}$-smooth manifold $\mathbb M$  (the number
$M$ is defined below) and associated with formal weights
 $\operatorname{deg}(X_i)=d_i$, $1\leq d_1\leq
d_2\leq\ldots\leq d_q$, $d_j\in \mathbb N$. To the commutator
$X_I=[X_{i_1},[\ldots,[X_{i_{k-1}},X_{i_k}]\ldots]$ a weight equal
to its homogeneous degree is assigned:
\begin{equation}\label{deg}
\operatorname{deg} X_I=|I|_h=d_{i_1}+\ldots+d_{i_k}.
\end{equation}
It is assumed that $\operatorname{span}\{X_I(v)\}_{|I|_h\leq
M}=T_v\mathbb M$ for all $v\in U$. Letting $H_j=\operatorname{span}
\{X_I\}_{|I|_h\leq j}$  we get a weighted
filtration of the tangent bundle
\begin{equation}\label{filtr}
H_1\subseteq H_2\subseteq\ldots\subseteq H_M=T\mathbb
M,\end{equation}  which meets the property
\begin{equation}\label{filtr_commut}
[H_i,H_j]\subseteq H_{i+j}.
\end{equation}

 A manifold
$\mathbb M$ endowed with the described structure will be called a
{\it (weighted) Carnot-Carath\'eodory space} (shortly, C-C space).

The minimal number $M$ of the elements $H_i$ in the filtration
\eqref{filtr} is called  the {\it depth} of the given
Carnot-Carath\'eodory space.
\end{Definition}

Note that \eqref{deg}, \eqref{filtr_commut} relate the natural
algebraic structure, induced by commutators of the vector fields
$X_1,X_2,\ldots,X_q$, with the additional formal degree structure.

If $X_j\in C^{\infty}(U)$ and $d_1=\dots=d_q=1$ then Definition
\ref{cc-space} coincides with the classical definition of a
sub-Riemannian manifold. The subbundle $H_1$ is then called {\it
horizontal} and generates, by commutation, the whole tangent
bundle ({\it H\"ormander's condition}).

\begin{Remark}
For simplicity of notation we will carry out all computations for
the basic case when $d_1=1$, $d_q=M$. All results of this
paper hold in the framework of Definition $\ref{cc-space}$,
replacing $\frac{1}{M}$ by
$\frac{d_1}{\operatorname{max}\{d_q,M\}}$.
\end{Remark}

\begin{Definition}\label{nereg} A point $u\in U$ of a
Carnot-Carath\'eodory space is called {\it regular} if there is a
neighborhood of $u$ in which the dimensions of all $H_k$ are
constant; otherwise this point is called {\it nonregular} or {\it
singular}.\end{Definition}

\begin{Definition} Let us consider a distance function on $U$
defined as
$$\rho(v,w)=\inf\Bigl\{\delta>0\mid \text{ there is a curve }
 \gamma:[0,1]\to U\text{ such that }$$
\begin{equation}\label{rho}\gamma(0)=v, \gamma(1)=w,
\dot{\gamma}(t)=\sum\limits_{|I|_h\leq M} w_I X_I(\gamma(t)),
|w_I|<\delta^{|I|_h}\Bigr\}.\end{equation}\end{Definition}

The distance function \eqref{rho} was first introduced in
\cite{nsw} where it was proved that it is continuous and, for
``classical'' sub-Riemannian manifolds, equivalent to the
intrinsic Carnot-Carath\'eodory metric $d_c$ (Ball-Box theorem,
see also \cite{bel,jean,karm2,vk,morbid,morbid2}).

\begin{Definition}[\rm{\cite{Stein}}]\label{quasim} A {\it quasimetric
space}
$(X,d_X)$ is a topological space $X$ endowed with a quasimetric
$d_X$. A {\it quasimetric } is a mapping $d_X:X\times X\to \mathbb
R^+$ meeting the following properties

(1) $d_X(u,v)\geq 0$; $d_X(u,v)= 0$ iff $u=v$;

(2) $d_X(u,v)\leq c_Xd_X(v,u)$, where $1\leq c_X<\infty$ is a
constant independent of $u,v\in X$ (generalized symmetry
property);

(3) $d_X(u,v)\leq Q_X(d_X(u,w)+d_X(w,v))$, where $1\leq
Q_X<\infty$ is a constant independent of $u,v,w\in X$ (generalized
triangle inequality);

(4) $d_X(u,v)$ is upper-semicontinuous on the first argument.

If $c_X= Q_X=1$, then $(X,d_X)$ is a metric space.\end{Definition}

\begin{Proposition} $(U,\rho)$ is a quasimetric space.\end{Proposition}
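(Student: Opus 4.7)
The plan is to verify the four axioms of Definition \ref{quasim} in turn, in all cases exploiting only the admissibility definition of $\rho$ and the spanning hypothesis on the commutators $X_I$.

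For axiom (1), non-negativity $\rho\geq 0$ is immediate from taking the infimum over positive $\delta$, and the identity $\rho(u,u)=0$ follows from the admissible constant curve $\gamma\equiv u$ with all $w_I=0$, valid for every $\delta>0$. For the converse $\rho(u,v)=0\Rightarrow u=v$, I plan to estimate the speed of an admissible curve in a local chart about $u$: since $|\dot\gamma(t)|\leq\sum_{|I|_h\leq M}|w_I|\,\|X_I\|_{C^0}\leq C\delta$ for $\delta\leq 1$, integration gives $|u-v|\leq C\delta$ in that chart, forcing $u=v$ as $\delta\to 0$. Axiom (2) follows from the time-reversed curve $\tilde\gamma(t)=\gamma(1-t)$, whose coefficients $-w_I$ satisfy the same bound $|-w_I|<\delta^{|I|_h}$; this gives symmetry with $c_\rho=1$.

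For axiom (3) I plan a concatenation with rescaling. Given $\delta_1>\rho(u,w)$, $\delta_2>\rho(w,v)$ with admissible curves $\gamma_1,\gamma_2$, set $\delta=\delta_1+\delta_2$ and $\lambda_i=\delta/\delta_i\geq 1$, and splice $\gamma_1(\lambda_1 t)$ on $[0,1/\lambda_1]$ with $\gamma_2(\lambda_2(t-1/\lambda_1))$ on $[1/\lambda_1,1]$. The chain rule inflates each coefficient by $\lambda_i$, and the bound $\lambda_i\delta_i^{|I|_h}\leq\lambda_i^{|I|_h}\delta_i^{|I|_h}=\delta^{|I|_h}$ (valid because $\lambda_i\geq 1$ and $|I|_h\geq 1$) gives admissibility with parameter $\delta$, hence $\rho(u,v)\leq\delta_1+\delta_2$. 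If the intended reading forces the $w_I$ to be constants rather than $t$-dependent, the concatenation is not literal and must be replaced by a Baker-Campbell-Hausdorff estimate, which will produce some constant $Q_\rho\geq 1$ rather than $Q_\rho=1$.

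Axiom (4), upper semicontinuity in the first argument, is where I expect the main obstacle, and the only one where the spanning hypothesis $\operatorname{span}\{X_I(u)\}_{|I|_h\leq M}=T_u\mathbb M$ is used essentially. Fixing $v$, $\varepsilon>0$, and $\delta<\rho(u,v)+\varepsilon/2$ together with an admissible curve $\gamma$ from $u$ to $v$, I plan to select a basis $X_{I_1},\ldots,X_{I_N}$ of $T_u\mathbb M$ from the commutator family and apply the inverse function theorem to the smooth map $\mathbf{s}\mapsto\exp(\sum_j s_j X_{I_j})(u')$: for $u'$ close to $u$ this produces coefficients with $|s_j|\leq C|u-u'|$ whose associated integral curve runs from $u'$ to $u$. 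Taking $\eta=\max_j|s_j|^{1/|I_j|_h}\to 0$ as $u'\to u$, and concatenating this short curve with $\gamma$ as in axiom (3), gives $\rho(u',v)\leq\delta+\eta<\rho(u,v)+\varepsilon$ once $u'$ is sufficiently close to $u$. The quantitative control of $\eta$ via the inverse function theorem is the subtle point, as that map is only as smooth as the commutators $X_I$ allow.
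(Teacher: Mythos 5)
Your handling of axioms (1) and (2) is fine and matches the paper's (very terse) proof, which simply says these follow from ODE theory and that $\rho$ is in fact symmetric. The genuine gap is in axiom (3). In the definition \eqref{rho} the coefficients $w_I$ are constants, not functions of $t$, so an admissible curve is an exponential $\exp\bigl(\sum_{|I|_h\leq M}w_IX_I\bigr)$ and your splicing-with-rescaling argument is not available; you notice this yourself, but the fallback you offer (``replace it by a Baker--Campbell--Hausdorff estimate'') is precisely the hard part of the whole proposition, not a routine repair. The obstruction is that BCH expresses the composition of two exponentials through higher-order commutators $X_K$ with $|K|_h>M$, and at a \emph{nonregular} point the decomposition of such commutators in the basis $\{Y_i\}$ (Proposition \ref{treug_prop}) respects the degree filtration only \emph{at} the base point, not in a neighborhood, so the degree bookkeeping needed to bound the resulting coefficients by $\delta^{|I|_h}$ breaks down. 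The paper does not attempt this on $U$: it defers the generalized triangle inequality entirely to Proposition \ref{treug_ineq} in Section 4, where the space is first lifted (Proposition \ref{lift}) to a free-up-to-order-$M$, hence regular, C-C space $\tilde U$, the BCH computation is carried out there where all commutator relations are uniform, the known regular-case inequality from \cite{vk} is invoked, and the estimate is pushed down via the distance-decreasing projection $\rho(v,w)\leq\tilde\rho(\tilde v,\tilde w)$. Your proof as written is missing this mechanism, and without it axiom (3) is unproved at nonregular points.

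Axiom (4) inherits the same defect and adds one of its own. Your plan is to join $u'$ to $u$ by a short admissible curve and concatenate; besides the concatenation problem above, even granting the generalized triangle inequality you only get $\rho(u',v)\leq Q\bigl(\rho(u',u)+\rho(u,v)\bigr)$ with $Q\geq1$, whence $\limsup_{u'\to u}\rho(u',v)\leq Q\,\rho(u,v)$ --- this is \emph{not} upper semicontinuity unless $Q=1$. The correct elementary route, and presumably what the paper means by ``follows from properties of solutions of ODEs,'' is to keep a single admissible curve and perturb its coefficients: for $\delta>\rho(u,v)$ fix $(w_I)$ with $|w_I|<\delta^{|I|_h}$ and $\exp\bigl(\sum w_IX_I\bigr)(u)=v$; the endpoint map $(u',(w_I))\mapsto\exp\bigl(\sum w_IX_I\bigr)(u')$ is continuous and, by the spanning hypothesis, a submersion in the $(w_I)$ variables, so for $u'$ close enough to $u$ one can adjust $(w_I)$ inside the same open box so that the endpoint is still exactly $v$, giving $\rho(u',v)<\delta$ directly. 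I would recommend restructuring your proof accordingly: prove (1), (2), (4) by these direct ODE arguments and state explicitly that (3) requires the lifting construction, rather than presenting a concatenation that the definition does not support.
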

\begin{proof}
Properties (1), (2) and (4) immediately follow from the properties
of solutions of ordinary differential equations (and we have
$\rho(v,w)= \rho(w,v)$). The generalized triangle inequality will
be proved below (Proposition \ref{treug}). 
\end{proof}

The simplest examples of (regular) weighted Carnot-Carath\'eodory
spaces are Carnot groups endowed with an additional degree
structure.

\begin{Example}[\rm{\cite{fol,fs}}]
Consider the Heisenberg group  $\mathbb H^n$: let $\mathbb
M=\mathbb R^N$, $N=2n+1$, with the coordinates
$(x_1,x_2,\ldots,x_n,y_1,y_2,\ldots,y_n,t) \in\mathbb R^N$.
Consider the vector fields
$$X_j=\partial_{x_j}-\frac{y_j}{2}\partial_{t},\
Y_j=\partial_{y_j}+\frac{x_j}{2}\partial_{t},\ \partial_t $$ with
commutator relations
$$[X_j,Y_j]=T.$$

Let us first assign to all of these vector fields the weights
naturally defined by their commutator table:
$$\text{deg}(X_j)=\text{deg}(Y_j)=1,\ \text{deg}(T)=2,$$
then for $w=\text{exp}(x_jX_j+y_jY_j+tT)(v)$ we have
$$\rho(v,w)=\max\{|x_1|,\ldots,|x_n|,|y_1|,
\ldots,|y_n|,|t|^{1/2}\}.$$ Now let $$\text{deg}(X_j)=a_j,\
\text{deg}(Y_j)=b_j,\ \text{deg}(T)=c,\text{ where } a_j+b_j=c$$
for all $j=1,\ldots,N$.  Then
$$\rho(v,w)=\max\{|x_ j|^{1/a_j},|y_j|^{1/b_j},
|t|^{1/c}\}$$ is a quasimetric not equivalent to the previous one.
In both cases all points of $\mathbb R^N$ are
regular.
\end{Example}

The next example illustrates that, for the C-C spaces from
Definition \ref{cc-space}, the Rashevsky-Chow theorem may fail to
hold, i.e. the intrinsic C-C metric might not exist.

\begin{Example}[\rm{\cite{Stein}}]\label{ex2}
Consider the Euclidean space $\mathbb R^N$ with the standard basis
$$\partial_{x_1},\partial_{x_2},\ldots,\partial_{x_N}$$ and let
$\text{deg}(\partial_{x_i})=d_j$ for
$i=k_j+1,k_j+2,\ldots,k_{j+1}$, where $k_1\leq k_2\leq\ldots\leq
k_M=N$.

Obviously, the subbundles
$$H_i=\text{span}\{\partial_{x_1},\partial_{x_2},\ldots,\partial_{x_i}\}$$
 meet the condition $[H_i,H_j]\subseteq H_{i+j}$, since
 $[H_i,H_j]=\{0\}$. At the same moment, none of the subsets of the set
  of vector fields
 $\{\partial_{x_i}\}$ meets the H\"ormander's condition, and, for
 any ``horizontal'' subbundle, there are points of $\mathbb R^N$
 which can not be joined by a horizontal curve.

In the considered example all points of $\mathbb R^N$ are regular.
If $v,w\in\mathbb R^N$ and
$w-v=(x_1,x_2,\ldots,x_N),$ then
$$\rho(v,w)=\underset{i=1,\ldots,N}\max\{|x_i|^{1/d_i}\}.
$$
\end{Example}

A further peculiarity of the considered weighted C-C spaces is that
different choices of weights $d_i$ may lead to different
combinations of regular and nonregular points.

\begin{Example}\label{reg_nereg}
Consider on the Euclidean space $\mathbb M=\mathbb R^3$ the vector
fields
 $$\{X_1=\partial_y, X_2=\partial_x+y\partial_t,X_3=\partial_x\}$$
 with the only nontrivial commutator relation $[X_1,X_2]=\partial_t$.

 Let first $\text{deg}(X_i):=1$,
$i=1,2,3$. Then $\text{deg}([X_1,X_2])=2$ and
$$H_1=\text{span}\{X_1,X_2,X_3\},\
H_2=H_1\cup\text{span}\{[X_1,X_2]\}.$$ In this case $\{y=0\}$ is a
plane consisting of nonregular points. Really, for $y\neq 0$ we
have $\text{dim}(H_1)=3$, while for $y=0$ we have
$\text{dim}(H_1)=2$.

Now assume that $\text{deg}(X_1):=a,$ $\text{deg}(X_2):=b$ and
$\text{deg}(X_3):=a+b$, where $a\leq b$. Then
$\text{deg}([X_1,X_2])=a+b$, hence
$$H_a=\text{span}\{X_1\},\ H_b=H_a\cup\text{span}\{X_2\},\
H_{a+b}=H_a\cup H_b\cup\text{span}\{X_3,[X_1,X_2]\}.$$ In this
case  all points of $\mathbb R^3$ are regular.
\end{Example}

Let us now briefly recall the approach of  the papers of S.
Vodopyanov and M. Karmanova
 \cite{karm, karm1, vk, vk1, vk2}, devoted to regular C-C spaces
 (they are particular cases of weighted C-C spaces from Definition
  \ref{cc-space}) in minimal smoothness assumptions, and some main
  results of those papers, on which the proofs of the main results
  of the present paper heavily rely.

\begin{Definition}[\rm \cite{vk, vk1, vk2, gre,karm,karm1}, cf.
\cite{bel,gro,lan,mm,nsw, rs} etc.]\label{cc-space-reg}
 Let $\mathbb M$ be a connected $C^{\infty}$-smooth Riemannian
 manifold of dimension $N$.  The manifold $\mathbb M$
is called a  {\it regular Carnot-Carath\'eodory space},
if there is a filtration of its tangent bundle $T\mathbb M$
\begin{equation}\label{filtr1}
H\mathbb M=H_1\subsetneq \ldots\subsetneq H_i\subsetneq\ldots
H_M=T\mathbb M,\end{equation} such that in some area
$U\subseteq\mathbb M$ there are $C^{p}$-smooth vector fields
$X_1,\ldots,X_N$, where $p>1$, meeting the following conditions.

\noindent For all $u\in U$ we have

(i) $H_i(u)=\text{span}\{X_1(u),\ldots,X_{\text{dim} H_i}(u)\}$
is a subbundle of $T_u\mathbb M$ of dimension
 $\text{dim}H_i,\
i=1,\ldots,M$;

(ii) The following decomposition holds
 \begin{equation}\label{commut}[X_i,X_j](v)=
 \sum\limits_{k:\deg X_k\leq \deg X_i+\deg
X_j}c_{ijk} (v) X_k(v),\end{equation} where
 $\deg X_k=\min\{m|X_k\in H_m\}$ is the {\it degree}
of the vector field $X_k$.

The number $M$ is again called the {\it depth} of the C-C space
$\mathbb M$.
\end{Definition}

The condition (i) is equivalent to \eqref{filtr_commut} in Definition
\ref{cc-space}.

\begin{Remark}
In the present paper it suffices to have, for regular C-C spaces,
smoothness $p=M+1$, but most of the results of this section are
true for $C^{1,\alpha}$-smooth vector fields $X_1,\ldots,X_N$,
where $\alpha>0$ is the H\"older constant of the first-order
derivatives. In this case, the expression $\frac{1}{M}$ in the
estimates below is replaced by $\frac{\alpha}{M}$.
\end{Remark}

Consider on $U\subseteq\mathbb M$ {\it canonical first-order coordinates}
  defined in a neighborhood of a point
$g\in\mathbb M$ as
\begin{equation}\label{exp}
\theta_g(v_1,\ldots,v_N)=\exp\left(\sum\limits_{i=1}^Nv_iX_i\right)(g).
\end{equation}
From theorems on continuous dependence of the solutions of ODE on
the initial data (see e.g. \cite{pon}) it follows that $\theta_g$ is
a $C^1$-diffeomorphism of the Euclidean ball $B_E(0,r)\subset
\mathbb R^N$ to $\mathbb M$, where $0\leq r<r_g$ for a sufficiently
smooth $r_g>0$. Let $U_g=\theta_g(B_E(0,r_g))$. The tuple
$(v_1,v_2,\ldots, v_N)=\theta_g^{-1}(v)\in B_E(0,r_g)$ is called the
first-order coordinates of the point $v\in U_g$. Further we assume
that $U\subseteq\bigcup_{g\in U}U_g$.

In the regular case, the tuple  $(v_1,v_2,\ldots, v_N)$ is
uniquely defined, thus the quasimetric \eqref{rho}, denoted
in the above-mentioned papers as $d_{\infty}$, is defined for points
$w,v\in {\cal U}$, such that
$v=\exp\left(\sum\limits_{i=1}^Nv_iX_i\right)(w)$, as
$$d_{\infty}(w,v)=\rho(w,v)=\max\limits_i\{|v_i|^{\frac{1}{\deg X_i}}\}. $$

 The generalized triangle inequality for $d_{\infty}$ is proved in
 \cite{karm,vk,vk2} in minimal smoothness assumptions and in \cite{nsw}
 for sufficiently smooth vector fields (in the general case, not just
 near regular points).


The balls w. r. t. the quasimetric $d_{\infty}$ will be denoted as
$\operatorname{Box}(u,r)=\{v\in U\ |\ d_{\infty}(v,u)<r\}.$

The vector fields $\widehat{X}_i^u$, obtained from the commutator
table \eqref{commut} replacing the inequality by equality, i. e.
$$[\widehat{X}^u_i,\widehat{X}^u_j](v)=
 \sum\limits_{k:\deg X_k= \deg X_i+\deg
X_j}c_{ijk} (u) \widehat{X}_k(v),$$ define a graded nilpotent Lie
algebra $V=V_1\oplus\ldots\oplus V_M$, where $[V_1,V_i]\subseteq
V_{i+1},\ i=1,\ldots M-1$, due to the following result.

\begin{Theorem}[\rm \cite{vk}]\label{al}
For a fixed point $u\in  U$ consider a family of coefficients
$$\{\bar{c}_{ij}^k\}=\{c_{ijk}(u)\}_{\operatorname{deg}X_k=
\operatorname{deg}X_i+\operatorname{deg}X_j}.$$

Then these constants $\{\bar{c}_{ij}^k\}$ meet the Jacobi identity
and hence define a Lie algebra. This Lie algebra is graded and nilpotent
and it can be defined by canonical
$C^{\infty}$-smooth vector fields
$\{(\widehat{X}_i^g)^{\prime}\}_{i=1}^N$ on $\mathbb R^N$, such that
$(\widehat{X}_i^g)^{\prime}(0)=e_i$.

By means of the exponential mapping \eqref{exp} the obtained
vector fields can be pushed into the initial space:
$D\theta_g\langle (\widehat{X}_i^g)^{\prime}
\rangle=\widehat{X}_i^g\in C^{\alpha}({\cal U})$ in such way that
\begin{equation}\label{equal_g}\widehat{X}_i^g(g)=X_i(g),\quad i=1,
\ldots,N.\end{equation}
\end{Theorem}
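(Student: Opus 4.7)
The plan is to (i) verify that the truncated coefficients $\bar c_{ij}^k=c_{ijk}(u)$ satisfy the Jacobi identity, (ii) read off the grading and nilpotency from the weight constraint, (iii) realize the resulting Lie algebra $\mathfrak g$ by $C^\infty$ vector fields on $\mathbb R^N$, and (iv) push the realization into $\mathcal U$ through the canonical coordinate map $\theta_g$ and check the normalization $\widehat X_i^g(g)=X_i(g)$.

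For step (i) I would start from the exact Jacobi identity $[X_i,[X_j,X_k]]+[X_j,[X_k,X_i]]+[X_k,[X_i,X_j]]=0$ and substitute \eqref{commut} twice in each double bracket. A term of the form $X_i(c_{jk\ell})(v)\,X_\ell$ lies in $H_{\deg X_j+\deg X_k}$, so by \eqref{filtr_commut} it cannot contribute to the ``top'' weight $\deg X_i+\deg X_j+\deg X_k$ of the triple bracket. The only surviving contributions are the double products $c_{jk\ell}(v)c_{i\ell m}(v)X_m$ in which both weight inequalities are saturated, i.e.\ $\deg X_\ell=\deg X_j+\deg X_k$ and $\deg X_m=\deg X_i+\deg X_\ell$. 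Evaluating at $v=u$ and summing the three cyclic permutations yields precisely the Jacobi relation for $\{\bar c_{ij}^k\}$, so these constants define a finite-dimensional Lie algebra $\mathfrak g$.

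For (ii), setting $V_s=\operatorname{span}\{\widehat X_k:\deg X_k=s\}$, the rule $\bar c_{ij}^k=0$ unless $\deg X_k=\deg X_i+\deg X_j$ gives $[V_i,V_j]\subseteq V_{i+j}$, with $V_{i+j}=\{0\}$ for $i+j>M$; hence $\mathfrak g=V_1\oplus\dots\oplus V_M$ is graded nilpotent of step $M$. For (iii), by Malcev's theorem the simply connected Lie group $G$ with Lie algebra $\mathfrak g$ is, via its exponential, diffeomorphic to $\mathbb R^N$, and in these exponential coordinates the left-invariant extensions $(\widehat X_i^g)'$ are polynomial (the Baker--Campbell--Hausdorff formula applied to a nilpotent algebra), hence $C^\infty$, with $(\widehat X_i^g)'(0)=e_i$ built into the basis. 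For (iv), define $\widehat X_i^g:=D\theta_g\langle(\widehat X_i^g)'\rangle$; since $\theta_g$ is only a $C^{1,\alpha}$-diffeomorphism when the $X_i$ are $C^{p}$ with $p>1$, the pushforward of a smooth field lands in $C^\alpha(\mathcal U)$, and the identity $\widehat X_i^g(g)=X_i(g)$ follows from $(\widehat X_i^g)'(0)=e_i$ together with $D\theta_g(0)e_i=X_i(g)$, which is exactly the definition \eqref{exp} of canonical first-order coordinates.

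The hard part is step (i): one must rigorously justify that the graded projection of the double-expanded triple bracket onto the maximal weight stratum involves only the structure constants $\bar c$, with no interference from the strictly lower-weight remainders in \eqref{commut} nor from the derivative terms $X_i(c_{jk\ell})X_\ell$. This is precisely the content of \eqref{filtr_commut}, but the chief technical work lies in organizing the indices and weights in the expansion carefully enough to isolate that top stratum; once this bookkeeping is done, the rest of the theorem is a standard application of the structure theory of simply connected nilpotent Lie groups.
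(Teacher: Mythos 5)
The paper offers no proof to compare against here: Theorem \ref{al} is imported verbatim from \cite{vk}, where the argument is carried out. Your proposal is correct and follows the standard route of that source: since $X_1(u),\dots,X_N(u)$ is a basis of $T_u\mathbb M$, the decomposition of the triple bracket at $u$ is unique, the derivative terms $X_i(c_{jk\ell})X_\ell$ and all non-saturated products fall into strata of weight strictly below $\deg X_i+\deg X_j+\deg X_k$, and projecting the exact Jacobi identity onto the top stratum gives the Jacobi relation for $\{\bar c_{ij}^k\}$ (for target indices $m$ of lower weight the relation holds vacuously by the degree constraint built into the $\bar c$'s, a triviality worth stating). The grading, nilpotency, realization by polynomial left-invariant fields in exponential coordinates with $(\widehat X_i^g)'(0)=e_i$, and the normalization via $D\theta_g(0)e_i=X_i(g)$ are all as expected, so I see no gap.
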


\begin{Definition}\label{loc_carno} Denote the local Lie group,
corresponding to the Lie algebra $V$ generated by
$\{\widehat{X}_i^g\}_{i=1}^N$, as ${\cal G}^g=(U,\star)$ at the
point $g$. The group operation $\star$ is defined as follows: if
$$x=\exp\Bigl(\sum\limits_{i=1}^Nx_i\widehat{X}^g_i\Bigr)(g),\
y=\exp\Bigl(\sum\limits_{i=1}^Ny_i\widehat{X}^g_i\Bigr)(g),$$ then
$$x\star
y=\exp\Bigl(\sum\limits_{i=1}^Ny_i\widehat{X}^g_i\Bigr)\circ
\exp\Bigl(\sum\limits_{i=1}^Nx_i\widehat{X}^g_i\Bigr)(g) =
\exp\Bigl(\sum\limits_{i=1}^Nz_i\widehat{X}^g_i\Bigr)(g),$$ where
$z_i$ are calculated by means of the Campbell-Hausdorff
formula.\end{Definition}

Note that, if the H\"ormander's condition holds, ${\cal G}^g$ is a
local Carnot group, i. e. $V=V_1\oplus\ldots\oplus V_M$, where
$[V_1,V_i]= V_{i+1},\ i=1,\ldots M-1$.

The quasimetric on ${\cal G}^g$ is defined in a similar way as
$d_{\infty}$: for $u,v\in {\cal G}^g$ such that
$v=\exp\left(\sum\limits_{i=1}^Nv_i\widehat{X}^g_i\right)(u)$ let
$$d_{\infty}^g(u,v)=\rho^g(u,v)=
\max\limits_i\{|v_i|^{\frac{1}{\deg X_i}}\}.$$

In the present paper we will use the following results.

\begin{Theorem}[\rm \cite{vk}]\label{decomp}
 For all $x\in U$, such that $|x_j|\leq
 \varepsilon^{|I_j|}$, the following decompositions hold:
  \begin{equation}\label{dec}
 X_j(x)=\sum\limits_{i=1}^{\tilde{N}}a_{j,k}(x)\widehat{X}_k(x),
 \end{equation}
 where
 $$ a_{j,k}=
 \begin{cases}
\delta_{j,k}+O(\varepsilon),\quad&\operatorname{deg}(X_j)=\operatorname
{deg}(X_k),\\
O(\varepsilon),\quad&\operatorname{deg}(X_j)<\operatorname{deg}(X_k),\\
o(\varepsilon^{|I_k|-|I_j|}),\quad&\operatorname{deg}(X_k)>
\operatorname{deg}(X_j).
 \end{cases}
 $$
 \end{Theorem}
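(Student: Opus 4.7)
The plan is to work entirely in the canonical first-order coordinates $\theta_u$ of \eqref{exp} centered at the base point $u$. In these coordinates $u$ corresponds to the origin, the weighted box $|x_j|\le\varepsilon^{|I_j|}$ is a sublevel set of the anisotropic dilation $\delta_\varepsilon(x_1,\dots,x_N)=(\varepsilon^{\deg X_1}x_1,\dots,\varepsilon^{\deg X_N}x_N)$, and by Theorem \ref{al} the nilpotent fields $\widehat{X}_k^u$ have coefficient functions that are polynomial and weighted-homogeneous of degree $-\deg X_k$. Because $\widehat{X}_k^u(u)=X_k(u)$ by \eqref{equal_g}, the tuple $\{\widehat{X}_k^u(x)\}_{k=1}^N$ spans $T_xU$ in a whole neighborhood of $u$, so the decomposition $X_j(x)=\sum_k a_{j,k}(x)\widehat{X}_k^u(x)$ is well-defined and $a_{j,k}(u)=\delta_{j,k}$. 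This is the starting point for all three estimates.

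The core step is a term-by-term comparison of the Taylor expansions of $X_j$ and $\widehat{X}_k^u$ at the origin. Write $X_j=\sum_l f_{j,l}(x)\,\partial_{x_l}$ and $\widehat{X}_k^u=\sum_l g_{k,l}(x)\,\partial_{x_l}$, where the $g_{k,l}$ are weighted-homogeneous polynomials of weighted degree $\deg X_l-\deg X_k$. Expand each $f_{j,l}$ at $0$ and regroup the monomials by their weighted order $\sum_i\alpha_i\deg X_i$. The key structural fact, which is what the filtration property \eqref{filtr_commut} and the construction of the algebra $V$ in Theorem \ref{al} encode, is that the weighted principal part of $X_j$ equals $\widehat{X}_j^u$; all terms of $X_j-\widehat{X}_j^u$ therefore have strictly higher weighted homogeneity than the corresponding slot of $\widehat{X}_j^u$. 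The $C^{2M+1}$ hypothesis is exactly what is needed to write down and control enough Taylor terms to see this cancellation up to the filtration depth.

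Once the two expansions are inserted into $X_j=\sum_k a_{j,k}\widehat{X}_k^u$ and the resulting linear system is inverted (the leading matrix is the identity, so inversion is a convergent Neumann series on the box), the claimed estimates fall out by homogeneity counting. On $|x_i|\le\varepsilon^{\deg X_i}$ any monomial $x^\alpha$ is of size $O(\varepsilon^{\sum\alpha_i\deg X_i})$, which gives $a_{j,j}=1+O(\varepsilon)$ in the diagonal case, $a_{j,k}=O(\varepsilon)$ when $\deg X_j<\deg X_k$, and an $O(\varepsilon^{|I_k|-|I_j|})$ bound for the upper-triangular entries coming from the weighted-homogeneity gap of $\widehat{X}_k^u$.

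The main obstacle is upgrading the last $O$ to the genuine little-$o$ $o(\varepsilon^{|I_k|-|I_j|})$ asserted in the theorem. This improvement is not a matter of power counting: it requires that the Taylor remainder beyond the weighted-principal part vanish strictly faster than the homogeneous order of $\widehat{X}_k^u$ permits. I would obtain it by combining the cancellation $X_j(u)=\widehat{X}_j^u(u)$ with a uniform continuity argument for the $(2M+1)$-st order Taylor remainder on the shrinking box, which is precisely the estimate packaged in the regular case by \cite{vk} and which I would invoke rather than rederive.
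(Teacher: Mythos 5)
First, a point of comparison you could not have known: the paper does not prove Theorem \ref{decomp} at all. It is stated with the attribution \cite{vk} and used as an imported tool, so there is no internal proof to measure your argument against. The closest internal analogue is Proposition \ref{vf_decomp}, which runs the same program you describe --- push the fields into privileged coordinates, Taylor-expand, isolate the weighted principal part, and invert a perturbation of the identity via a Neumann series --- but in the second-kind coordinates \eqref{loc_coord} rather than the first-order coordinates \eqref{exp}. Your general architecture (monomial counting $x^\alpha=O(\varepsilon^{|\alpha|_h})$ on the weighted box, inversion of $I+C(x)$) matches that template, so the skeleton of your sketch is the right one.

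The genuine gap is in what you call the core step. You assert that ``the weighted principal part of $X_j$ equals $\widehat{X}_j^u$'' and attribute this to the filtration property \eqref{filtr_commut} together with Theorem \ref{al}. But Theorem \ref{al} only states that the structure constants $c_{ijk}(u)$ with $\deg X_k=\deg X_i+\deg X_j$ satisfy the Jacobi identity, hence generate a graded nilpotent Lie algebra realized by canonical fields with $\widehat{X}^u_i(u)=X_i(u)$; it says nothing about $X_j-\widehat{X}^u_j$ consisting only of terms of higher weighted order away from $u$. That assertion \emph{is} the nilpotentization theorem --- the paper explicitly remarks that Theorem \ref{decomp} implies Gromov's nilpotentization theorem --- so taking it as an input is circular. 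Moreover, the only genuinely delicate part of the estimates, the little-$o$ in $o(\varepsilon^{|I_k|-|I_j|})$ (which is exactly what cannot be obtained by power counting and what drives the $\varepsilon^{1+\frac{1}{M}}$ gains downstream), you explicitly propose to ``invoke rather than rederive'' from \cite{vk}; since the entire theorem is already a citation to \cite{vk}, deferring its hardest step back to that source does not amount to a proof. A minor but real issue with the statement itself: as printed, the second and third cases of \eqref{dec} impose the same condition ($\deg X_j<\deg X_k$ versus $\deg X_k>\deg X_j$); comparison with the analogous decomposition \eqref{dec1} and the Remark following Theorem \ref{theorem_int_lines} shows the second case is meant to read $\deg X_j>\deg X_k$, and your reading of the third case (higher-degree columns) is the intended one.
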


Note that Theorem \ref{decomp} implies  Gromov's nilpotentization
theorem, which is proved in \cite{gro,bel,rs,vk} for smooth vector
fields, in  \cite{vk} for $C^1$ vector fields and depth $M=2$, in
\cite{gre} for $C^2$ vector fields, in \cite{karm1} for
$C^{1,\alpha}$ vector fields, where $\alpha>0$.

\begin{Theorem}[\rm Theorem on divergence of integral lines \cite{vk1}]
\label{int_lines_perem} Let $u,v\in U$,
$d_{\infty}(u,v)=C\varepsilon$, $C<\infty$. Consider the curves
$\gamma(t)$, $\widehat{\gamma}(t)$ in
$\operatorname{Box}(v,K\varepsilon)$, satisfying the equations
$$\begin{cases}\dot{\gamma}(t)=\sum\limits_{i=1}^Nb_i(t)X_i(\gamma(t)),\\
\gamma(0)=v,\end{cases}\quad\text{and}\quad
\begin{cases}\dot{\widehat{\gamma}}(t)=
\sum\limits_{i=1}^Nb_i(t)\widehat{X}^u_i(\widehat{\gamma}(t)),\\
\widehat{\gamma}(0)=v,\end{cases}$$ where
$$\int\limits_{0}^1|b_i(t)|dt<S\varepsilon^{\operatorname{deg}X_i},
S<\infty.$$
 Then
$$\max\{d_{\infty}(w,\widehat{w}),d_{\infty}^u(w,\widehat{w})\}=
O(\varepsilon^{1+\frac{1}{M}})$$ uniformly on $U$.
\end{Theorem}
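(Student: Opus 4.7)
The plan is to express both ODEs in terms of the same nilpotent family $\{\widehat X_k^u\}$ using Theorem \ref{decomp}, and then propagate the discrepancy through the triangular structure of the nilpotent group. Since $d_\infty(u,v)=C\varepsilon$ and both curves lie in $\operatorname{Box}(v,K\varepsilon)$, the entire motion stays in a box of size $O(\varepsilon)$ about $u$, so Theorem \ref{decomp} applies pointwise along $\gamma$ and gives $X_j(x)=\sum_k a_{j,k}(x)\,\widehat X_k^u(x)$. Substituting this into the ODE for $\gamma$, I rewrite it as
$$\dot\gamma(t)=\sum_k \tilde b_k(t)\,\widehat X_k^u(\gamma(t)),\qquad \tilde b_k(t):=\sum_j b_j(t)\,a_{j,k}(\gamma(t)),$$
so that $\gamma$ and $\widehat\gamma$ are both driven by the \emph{same} vector fields $\widehat X_k^u$, merely with perturbed coefficients $\tilde b_k$ versus $b_k$. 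The $L^1$-discrepancy $\int_0^1|\tilde b_k-b_k|\,dt$ is then controlled by combining the pointwise estimates on $a_{j,k}$ from Theorem \ref{decomp} with the given bounds $\int|b_j|\,dt<S\varepsilon^{\deg X_j}$.

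\textbf{Layer-by-layer analysis.} Next I would pass to canonical coordinates based at $u$ using the nilpotent frame, so that each $\widehat X_k^u$ becomes a polynomial vector field whose $l$-th component is weighted-homogeneous of degree $\deg X_l-\deg X_k$ (and vanishes if $\deg X_l<\deg X_k$). The resulting ODE system is therefore triangular with respect to the filtration: the $l$-th coordinate depends only on coordinates with degree $\leq\deg X_l$ and on coefficients $\tilde b_k$ (respectively $b_k$) with $\deg X_k\leq \deg X_l$. By induction on $\deg X_l$, applying Gronwall's inequality to the difference $\gamma_l-\widehat\gamma_l$ with source term $\sum_k(\tilde b_k-b_k)(\widehat X_k^u)_l(\gamma)$ together with the already-controlled lower-layer estimates, I expect to obtain the componentwise bound
$$|\gamma_l(1)-\widehat\gamma_l(1)|=O\bigl(\varepsilon^{\deg X_l+1/M}\bigr).$$

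\textbf{From coordinates to the quasimetric.} Given this estimate, the definition of $d_\infty^u$ yields
$$d_\infty^u(w,\widehat w)=\max_l|\gamma_l(1)-\widehat\gamma_l(1)|^{1/\deg X_l}\leq\max_l O\bigl(\varepsilon^{1+1/(M\deg X_l)}\bigr)=O(\varepsilon^{1+1/M}),$$
the worst case being the top layer $\deg X_l=M$. The analogous bound for $d_\infty(w,\widehat w)$ follows by running the same expansion in the reverse direction, i.e.\ writing each $\widehat X_k^u$ in terms of the $X_j$'s along $\widehat\gamma$ via Theorem \ref{decomp}, which inverts the coefficient matrix $(a_{j,k})$ to the same order of precision.

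\textbf{Principal obstacle.} The sharp gain of $\varepsilon^{1/M}$ is the crux. Terms with $\deg X_j\geq \deg X_k$ in the decomposition contribute discrepancies of order $\varepsilon^{\deg X_k+1}$, safely better than needed. The delicate case is $\deg X_j<\deg X_k$, where the bare estimate $a_{j,k}=o(\varepsilon^{\deg X_k-\deg X_j})$ combined with $\int|b_j|\,dt=O(\varepsilon^{\deg X_j})$ yields only $o(\varepsilon^{\deg X_k})$, at the same scale as the main term. To extract the additional factor of $\varepsilon^{1/M}$ one must promote the $o(\cdot)$-statement to a quantitative bound of order $O(\varepsilon^{\deg X_k-\deg X_j+1/M})$ reflecting the Hölder regularity of the derivatives of the $X_i$, and then propagate it carefully through the layered Gronwall iteration. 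This is the technical heart of the proof.
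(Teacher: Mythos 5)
The paper does not actually prove this theorem: it is imported verbatim from \cite{vk1}, and the text only records that the proof there rests on the decomposition of Theorem \ref{decomp}, the Campbell--Hausdorff formula and Gromov's nilpotentization theorem (with \cite{karm,karm1} giving an alternative proof for constant $b_i$). Your overall strategy --- rewrite $\dot\gamma$ in the frame $\{\widehat X_k^u\}$ via Theorem \ref{decomp}, exploit the weighted-triangular structure of the nilpotent fields in canonical coordinates, and run a layered Gronwall iteration --- is the right family of arguments and matches the role the paper assigns to the estimates \eqref{dec}.

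However, there is a genuine quantitative error that breaks the conclusion. Your target componentwise bound $|\gamma_l(1)-\widehat\gamma_l(1)|=O(\varepsilon^{\deg X_l+1/M})$ does not yield the theorem: taking the $\deg X_l$-th root gives $\varepsilon^{1+1/(M\deg X_l)}$, and the worst (top) layer $\deg X_l=M$ then gives only $O(\varepsilon^{1+1/M^{2}})$, not $O(\varepsilon^{1+1/M})$. The exponent $1/M$ in the statement does not come from a gain of $\varepsilon^{1/M}$ in each coordinate; it comes from a gain of a \emph{full power} of $\varepsilon$ in each coordinate, $|\gamma_l(1)-\widehat\gamma_l(1)|=O(\varepsilon^{\deg X_l+1})$, after which the $\deg X_l$-th root gives $\varepsilon^{1+1/\deg X_l}$, whose worst case over $l$ is exactly $\varepsilon^{1+1/M}$. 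Consequently your ``principal obstacle'' is mis-stated as well: in the case $\deg X_j<\deg X_k$ one must upgrade $o(\varepsilon^{\deg X_k-\deg X_j})$ to $O(\varepsilon^{\deg X_k-\deg X_j+1})$ --- which is what the homogeneity expansion actually delivers for $C^{M+1}$-smooth fields, since the remainder $X_j-\widehat X_j^u$ consists of terms of homogeneous order at least $-\deg X_j+1$ (cf. Proposition \ref{vf_decomp} and its Corollary) --- and not merely to $O(\varepsilon^{\deg X_k-\deg X_j+1/M})$. A secondary gap: $d_\infty^u(w,\widehat w)$ is defined through the exponential coordinates of $w$ based at $\widehat w$, not through the difference of the coordinates of $w$ and $\widehat w$ in a chart centered at $v$; converting the latter into the former costs an application of the Campbell--Hausdorff formula, which is precisely the ingredient the paper says the proof in \cite{vk1} relies on.
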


Note that in \cite{karm,karm1} an analog of this   result (with
constant coefficients $b_i$) is proved without using the
Campbell-Hausdorff formula and Gromov's nilpotentization theorem,
for the case of $C^{1,\alpha}$-smooth vector fields, by means of
estimates obtained in \cite{karm,vk}.



Theorem \ref{int_lines_perem} and its analogs have many important
corollaries, in particular, each of them allows to prove the local
approximation theorem, in the smoothness assumptions considered in
each case, and also the Ball-Box theorem in the framework of the
following definition.

\begin{Definition} [\rm\cite{vk}]\label{cc-manifold}
If in Definition \ref{cc-space-reg} the following assumption (3)
holds, then $\mathbb M$ is called a {\it Carnot manifold}.

(3) The factor-mapping $[\,\cdot ,\cdot\,]:H_1\times
H_j/H_{j-1}\to H_{j+1}/H_{j}$, induced by the Lie bracket, is an
epimorphism for all $1\leq j<M$ (here it is assumed that
$H_0=\{0\}$).

In this case, the subbundle $HM=H_1$ is called {\it horizontal}.
\end{Definition}

By means of Theorem \ref{int_lines_perem}, an analog of the
Rashevsky-Chow theorem is proved in \cite{karm,vk} for Carnot
manifolds defined by $C^{1,\alpha}$-smooth vector fields. Thus it
is possible to define the intrinsic C-C
metric\begin{equation}\label{dc}d_c(u,v)=\inf\limits_
{\substack{\dot{\gamma}\in H\mathbb M, \\ \gamma(0)=u,\
\gamma(1)=v}}\{L(\gamma)\}.\end{equation}



The following assertion is formulated and proved in \cite{vk1}, in
the proof of the local approximation theorem.

\begin{Theorem}\label{int_lines_dc}
Consider the curves $\gamma$ and $\widehat{\gamma}$, satisfying
the equations
$$\begin{cases}\dot{\gamma}(t)=\sum\limits_{i=1}^ma_i(t)
\xi_i(\gamma(t)),\\
\gamma(0)=\tilde{v},
\end{cases}
\quad\text{and}\quad \begin{cases}\dot{\widehat{\gamma}}\
(t)=\sum\limits_{i=1}^ma_i (t)\widehat{\xi
}_i(\widehat{\gamma}(t)),\\
\widehat{\gamma}(0)=v.
\end{cases}$$
Denote $\gamma(1)=w$, $\widehat{\gamma}(1)=\widehat{w}$. If we
have  $d_c(u,v)=O(\varepsilon)$ and $d_c(v,w)=O(\varepsilon)$,
then
\begin{equation}\label{int_lines_est}\max\{d_c(w,\widehat{w}),d_c^{u}(w,
\widehat{w})\}=O(\varepsilon^{1+\frac{1}{M}}).\end{equation}
\end{Theorem}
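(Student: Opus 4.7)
The approach is to deduce Theorem \ref{int_lines_dc} from the already-established Theorem \ref{int_lines_perem} via the Ball-Box theorem. On a Carnot manifold (Definition \ref{cc-manifold}), the intrinsic C-C metric $d_c$ and the NSW quasimetric $d_\infty$ are bilipschitz equivalent, with constants uniform on compact subsets of $U$; the analogous equivalence $d_c^u \asymp d_\infty^u$ holds on the tangent Carnot group $\mathcal{G}^u$, uniformly in the base point $u$. The vector fields $\xi_i$, $i=1,\dots,m$, appearing in the statement are horizontal ($\deg \xi_i = 1$), since only horizontal curves make the $d_c$-hypotheses meaningful.

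First I would translate the hypotheses into the framework of Theorem \ref{int_lines_perem}. Ball-Box converts $d_c(u,v) = O(\varepsilon)$ to $d_\infty(u,v) = O(\varepsilon)$ directly. The hypothesis $d_c(v,w) = O(\varepsilon)$, interpreted with $\gamma$ taken to be a near-minimizing horizontal curve from $v$ to $w$ (which is the setting in which Theorem \ref{int_lines_dc} is invoked in the proof of the local approximation theorem in \cite{vk1}), yields the $L^1$ bound $\int_0^1 |a_i(t)|\,dt = O(\varepsilon) = O(\varepsilon^{\deg \xi_i})$ on each driving control. With all non-horizontal coefficients set to zero, this matches exactly the integrability hypothesis $\int_0^1 |b_i(t)|\,dt < S\varepsilon^{\deg X_i}$ required by Theorem \ref{int_lines_perem}.

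Next, applying Theorem \ref{int_lines_perem} with $b_i(t) = a_i(t)$ on the horizontal indices (and zero elsewhere) gives
$$\max\{d_\infty(w,\widehat{w}),\, d_\infty^u(w,\widehat{w})\} = O(\varepsilon^{1+1/M}).$$
A second application of Ball-Box, this time in the direction $d_c \le C\, d_\infty$ on $U$ and $d_c^u \le C\, d_\infty^u$ on $\mathcal{G}^u$, yields
$$\max\{d_c(w,\widehat{w}),\, d_c^u(w,\widehat{w})\} \le C\cdot\max\{d_\infty(w,\widehat{w}),\, d_\infty^u(w,\widehat{w})\} = O(\varepsilon^{1+1/M}),$$
which is precisely \eqref{int_lines_est}.

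The main technical point requiring care is the \emph{uniformity} of the Ball-Box constants as $u$ varies over the relevant neighborhood and as one passes to the varying tangent groups $\mathcal{G}^u$; both facts are established in \cite{karm, vk} in the present minimal-smoothness setting and do not introduce any additive error that could spoil the $\varepsilon^{1+1/M}$ rate. A secondary subtlety — extracting the $L^1$ bound on the controls $a_i(t)$ from the pointwise hypothesis $d_c(v,w) = O(\varepsilon)$ — is resolved by reducing to near-geodesic $\gamma$, which covers all intended applications; if one wishes a more general statement, one must simply add the $L^1$ bound on the $a_i$ as an explicit hypothesis, after which the argument above goes through verbatim.
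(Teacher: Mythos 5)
The paper does not actually contain a proof of Theorem \ref{int_lines_dc}: it is imported verbatim from \cite{vk1} (``formulated and proved in \cite{vk1}, in the proof of the local approximation theorem''), so there is no internal argument to compare against, and your reduction must be judged on its own merits. It holds up. Taking $b_i=a_i$ on the horizontal indices and $b_i=0$ otherwise, applying Theorem \ref{int_lines_perem}, and then transferring the resulting $d_{\infty}$/$d_{\infty}^u$ estimates back to $d_c$/$d_c^u$ via the Ball-Box equivalence is a legitimate and self-contained route given the cited results, and you correctly isolate the two points that genuinely need care: (i) the statement as written carries no hypothesis on the controls $a_i$, and without the bound $\int_0^1|a_i(t)|\,dt=O(\varepsilon)$ the conclusion is false (a curve driven by large controls that wanders and returns to a nearby endpoint gives a counterexample), so one must either restrict to near-minimizing $\gamma$ --- which is exactly how the theorem is invoked in the proof of Theorem \ref{lat}, where $\gamma$ is a geodesic --- or add the $L^1$ bound as an explicit hypothesis; (ii) the direction of Ball-Box you need, $d_c\le C\,d_{\infty}$ and $d_c^u\le C\,d_{\infty}^u$ with constants uniform in the base point, is the nontrivial connectivity direction, but it is available in this setting from \cite{nsw,vk,karm}, and being a multiplicative comparison it preserves the exponent $1+\frac{1}{M}$. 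Two small points you should still make explicit: Theorem \ref{int_lines_perem} requires the curves to stay in $\operatorname{Box}(v,K\varepsilon)$, which for a near-geodesic follows by running Ball-Box along the curve; and the mismatch of initial points ($\tilde v$ versus $v$) in the statement must be read as a typo, since Theorem \ref{int_lines_perem} needs both curves to issue from the same point. Whether \cite{vk1} argues by this reduction or proves the $d_c$ version directly by Campbell--Hausdorff estimates cannot be determined from the present paper, but your derivation is correct.
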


\begin{Theorem}[\rm Local approximation theorem \cite{vk1}]
\label{lat_dc}  Uniformly on $u\in U$, $v,w\in
B^{d_c}(u,\varepsilon)$ the following estimate holds
$$\left|d_c(v,w)-d_c^u(v,w)\right|=
O(\varepsilon^{1+\frac{1}{M}}).$$
\end{Theorem}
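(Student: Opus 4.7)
My approach is to deduce the estimate directly from the divergence-of-integral-lines estimate in Theorem~\ref{int_lines_dc}, using the standard ``near-geodesic transport'' trick: copy the control functions of an almost-minimizing horizontal curve from one of the two sub-Riemannian geometries ($d_c$ or $d_c^u$) into the other, then close the loop via Theorem~\ref{int_lines_dc}.

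For the bound $d_c^u(v,w)\le d_c(v,w)+O(\varepsilon^{1+1/M})$, fix $\eta>0$ and pick a horizontal curve $\gamma:[0,1]\to U$ with $\gamma(0)=v$, $\gamma(1)=w$, $\dot\gamma(t)=\sum_{i=1}^m a_i(t)\xi_i(\gamma(t))$, whose length $L(\gamma)=\int_0^1|a(t)|\,dt$ satisfies $L(\gamma)\le d_c(v,w)+\eta$. With the same controls $a_i(t)$ but the nilpotentized horizontal frame $\widehat\xi_i^u$, solve the corresponding ODE starting at $v$ and denote the endpoint by $\widehat w$. Since $d_c(u,v)=O(\varepsilon)$ and $d_c(v,w)\le d_c(v,u)+d_c(u,w)=O(\varepsilon)$, Theorem~\ref{int_lines_dc} applies and yields $d_c^u(w,\widehat w)=O(\varepsilon^{1+1/M})$. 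Since $\{\widehat\xi_i^u\}$ is orthonormal for the sub-Riemannian structure defining $d_c^u$, the nilpotent length of $\widehat\gamma$ equals $L(\gamma)$, so $d_c^u(v,\widehat w)\le L(\gamma)\le d_c(v,w)+\eta$. The triangle inequality then gives
\[
d_c^u(v,w)\le d_c^u(v,\widehat w)+d_c^u(\widehat w,w)\le d_c(v,w)+\eta+O(\varepsilon^{1+1/M}),
\]
and letting $\eta\to 0$ completes this direction.

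The reverse inequality $d_c(v,w)\le d_c^u(v,w)+O(\varepsilon^{1+1/M})$ is obtained by the symmetric construction: take a near-minimizer $\widehat\gamma$ for $d_c^u$ from $v$ to $w$ with $L(\widehat\gamma)\le d_c^u(v,w)+\eta$, and transport its controls onto the frame $\xi_i$ at $v$ to produce $\gamma$ ending at some $w'$; then $d_c(v,w')\le L(\gamma)=L(\widehat\gamma)\le d_c^u(v,w)+\eta$. To apply Theorem~\ref{int_lines_dc} we need $d_c(v,w')=O(\varepsilon)$, which follows because $d_c^u(v,w)\le d_c(v,w)+O(\varepsilon^{1+1/M})=O(\varepsilon)$ by the forward inequality already proved. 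Theorem~\ref{int_lines_dc} then gives $d_c(w,w')=O(\varepsilon^{1+1/M})$, and the triangle inequality for $d_c$ together with $\eta\to 0$ yields the matching bound.

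The main obstacle is the book-keeping of the domains and the uniformity of constants: one must verify that both $\gamma$ and $\widehat\gamma$ stay inside a $d_c$-neighborhood of $u$ of radius $O(\varepsilon)$ on which the nilpotentized frame $\widehat X_i^u$ is defined and on which Theorem~\ref{int_lines_dc} applies with constants uniform in $u\in U$. The decisive conceptual ingredient, however, is the identity $L(\widehat\gamma)=L(\gamma)$, which arises from the orthonormality of $\{\xi_i\}$ and $\{\widehat\xi_i^u\}$ in their respective sub-Riemannian structures and converts ``same controls'' into ``same lengths,'' thereby making the near-geodesic chasing possible.
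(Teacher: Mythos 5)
Your proposal is correct and follows essentially the same route as the paper's proof of this statement (given for Theorem~\ref{lat} in Section~7): transport the controls of a (near-)minimizing horizontal curve between the frames $\{\xi_i\}$ and $\{\widehat\xi_i^u\}$, invoke Theorem~\ref{int_lines_dc} to bound the endpoint discrepancy by $O(\varepsilon^{1+\frac1M})$, and close with the triangle inequality in both directions. The only cosmetic differences are that you work with $\eta$-almost minimizers instead of exact geodesics and get $L(\widehat\gamma)=L(\gamma)$ exactly from the orthonormality convention, where the paper uses a geodesic and an $O(\varepsilon^2)$ length comparison from \cite{vk1}; both are harmless at the stated order.
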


\section{Choice of basis, nilpotent approximation and
a homo\-ge\-neous quasimetric}

\begin{Definition}\label{min_bas}
Among the vector fields $\{X_I\}_{|I|_h\leq M}$ we choose a basis
\begin{equation}\label{bas}
\{Y_1, Y_2,\ldots, Y_N\}
\end{equation}
  as follows:

(i) the vector fields $Y_1,Y_2,\ldots, Y_N$ are linearly
independent at the point $u$ (hence, in some neighborhood of $u$);

(ii) the sum of their weights $\sum\limits_{i=1}^N \text{deg} Y_i$
is minimal;

(iii) the sum of orders $\sum\limits_{j=1}^N |I_j|$ of the
commutators $X_{I_j}$, corresponding to $Y_j$, is
minimal.

We say that the basis meeting conditions (i), (ii), (iii) is {\it
associated with the filtration} \eqref{filtr} at the point $u$.
\end{Definition}

 Denote the dimension of the $k$-th element $H_k$ of filtration \eqref{filtr}  at the point $u$ as $n_k=\operatorname{dim}H_k(u)$.
Then items (i), (ii) of Definition \ref{min_bas} are equivalent to
the fact that the vectors $\{Y_1(u),\ldots,Y_{n_k}(u)\}$ form
bases of $H_k(u)$ for all $k=1,\ldots,M$.

\begin{Remark}
Bases satisfying (i), (iii) were considered for ``classical''
sub-Riemannian geometry in \cite{bel,jean,mon} and other papers
(``normal'' or ``mimimal''  frame), when (ii) and (iii) coincide. In
our case the necessity of considering both (ii) and (iii) can be
seen from the Example \ref{reg_nereg}: having only (i), (ii) we can
choose both the basis $\{X_1,X_2,X_3\}$ and $\{X_1,X_2,[X_1,X_2]\}$;
these bases define a different algebraic structure. Adding both
conditions excludes such examples.
\end{Remark}
\begin{Proposition}\label{treug_prop}
For any vector field $X\in H_s$ we have
\begin{equation}\label{treugX}
X(v)=\sum\limits_{i=1}^N\xi_i(v)Y_i(v), \text{ where }\xi_i(u)=0
\text{ for }
 \operatorname{deg}Y_i>s.\end{equation}\end{Proposition}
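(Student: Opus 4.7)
My plan is a short local linear-algebra argument that uses only conditions (i) and (ii) of Definition \ref{min_bas} together with continuity of the frame. First I would note that since $Y_1(u),\ldots,Y_N(u)$ are linearly independent in the $N$-dimensional space $T_u\mathbb M$, by continuity of the $Y_j$ they stay linearly independent, hence form a frame of $T\mathbb M$, on some neighborhood $U'\subseteq U$ of $u$. Consequently, any vector field $X$ defined near $u$ admits a unique continuous decomposition
$$X(v)=\sum_{i=1}^N \xi_i(v)\, Y_i(v),\qquad v\in U',$$
obtained by inverting the frame matrix of the $Y_j$ at each point.

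The next step is to match the weight-ordering of the basis with the dimensions of the filtration. Ordering so that $\deg Y_1\leq \deg Y_2\leq\ldots\leq \deg Y_N$, I would use the recorded fact (equivalent to (i)+(ii)) that $\{Y_1(u),\ldots,Y_{n_k}(u)\}$ is a basis of $H_k(u)$ for every $k$, to conclude
$$\#\{i:\deg Y_i\leq s\}=n_s\qquad\text{for every }s=1,\ldots,M.$$
Indeed, each $Y_i$ is a commutator $X_{I_i}$, so $Y_i\in H_{\deg Y_i}$ as a global section; if $\deg Y_i\leq s$ then $Y_i(u)\in H_s(u)$, which bounds the count above by $n_s$. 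Conversely, if one of $Y_1,\ldots,Y_{n_s}$ had $\deg Y_i>s$, then it could be replaced by a commutator of weight $\leq s$ that completes a basis of $H_s(u)$, contradicting the minimality of $\sum \deg Y_j$ in condition (ii). Hence $\deg Y_i\leq s$ if and only if $i\leq n_s$.

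Finally I would evaluate the decomposition at $v=u$. Since $X\in H_s$ we have $X(u)\in H_s(u)=\operatorname{span}\{Y_1(u),\ldots,Y_{n_s}(u)\}$, and by linear independence of $Y_1(u),\ldots,Y_N(u)$ the coefficients $\xi_i(u)$ are uniquely determined; they must therefore vanish for $i>n_s$, i.e.\ whenever $\deg Y_i>s$, giving \eqref{treugX}. The main (rather mild) obstacle is the middle weight-count step, which requires unpacking condition (ii); once it is in place, the proposition reduces to uniqueness of coordinates with respect to a basis at a single point, and neither condition (iii) of Definition \ref{min_bas} nor the values of $\xi_i$ away from $u$ enter the argument.
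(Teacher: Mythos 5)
Your proof is correct and follows essentially the same route as the paper: reduce to the fact that $\{Y_1(u),\ldots,Y_{n_s}(u)\}$ is a basis of $H_s(u)$ and conclude $\xi_i(u)=0$ for $i>n_s$, i.e.\ for $\operatorname{deg}Y_i>s$. The only difference is that you spell out (via the exchange argument using condition (ii)) the counting identity $\#\{i:\operatorname{deg}Y_i\leq s\}=n_s$, which the paper simply asserts in the remark preceding the proposition as an equivalent reformulation of conditions (i) and (ii).
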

\begin{proof}
Really, by choice of the basis \eqref{bas} the vectors
$Y_1(u),\ldots, Y_{n_s}(u)$ constitute a basis of $H_s(u)$, hence
$\xi_i(u)=0$ for $i>n_s$. Consequently,  $\xi_i(u)=0$ for
 $\operatorname{deg}Y_i>s$.
\end{proof}

 \begin{Proposition}\label{treug_prop_reg} At a fixed point $u\in U$
 the following identity holds:
\begin{equation}\label{treug}[Y_i,Y_j](u)=\sum\limits_
{\operatorname{deg}Y_k\leq
\operatorname{deg}Y_i+\operatorname{deg}Y_j}
c_{ijk}(u)Y_k(u).\end{equation} If the point $u$ is regular, this
identity holds not just in $u$, but in some neighborhood of $u$.
\end{Proposition}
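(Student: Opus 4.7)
The plan is to deduce this directly from the preceding Proposition \ref{treug_prop} together with the fundamental property $[H_\alpha,H_\beta]\subseteq H_{\alpha+\beta}$ from \eqref{filtr_commut}. The observation is that the commutator $[Y_i,Y_j]$ is itself a vector field lying in a known level of the filtration, namely in $H_s$ with $s=\operatorname{deg}Y_i+\operatorname{deg}Y_j$, so Proposition \ref{treug_prop} applies to it verbatim.

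Concretely, I would argue as follows. Since $Y_i\in H_{\operatorname{deg}Y_i}$ and $Y_j\in H_{\operatorname{deg}Y_j}$ by the choice of the basis, condition \eqref{filtr_commut} gives $[Y_i,Y_j]\in H_s$. Applying Proposition \ref{treug_prop} with $X=[Y_i,Y_j]$ yields a decomposition
$$[Y_i,Y_j](v)=\sum_{k=1}^N\xi_k(v)Y_k(v)$$
on a neighborhood of $u$, with $\xi_k(u)=0$ whenever $\operatorname{deg}Y_k>s$. Setting $c_{ijk}(u):=\xi_k(u)$ and evaluating at $v=u$ kills every term of degree exceeding $s$ and leaves precisely the identity \eqref{treug}.

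For the regular case, I would use the hypothesis that $\dim H_k$ is constant in a neighborhood of $u$. By Definition \ref{min_bas}, the vectors $\{Y_1(u),\ldots,Y_{n_k}(u)\}$ form a basis of $H_k(u)$ for each $k$; by continuity of the $Y_i$ and nonvanishing of the relevant minor determinants, they remain linearly independent at every $v$ in a sufficiently small neighborhood, so they span a subspace of $H_k(v)$ of dimension $n_k$, which by regularity equals $\dim H_k(v)$ and therefore coincides with all of $H_k(v)$. Hence the basis $\{Y_1,\ldots,Y_N\}$ satisfies conditions (i)--(iii) of Definition \ref{min_bas} at every point of a neighborhood of $u$, and the argument above applies pointwise, producing coefficients $c_{ijk}(v)$ for which \eqref{treug} holds on that neighborhood.

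I do not expect any genuine obstacle here: the filtration condition \eqref{filtr_commut} does all the work, and Proposition \ref{treug_prop} is already set up precisely for this kind of application. The only point requiring care is the regular case, where one must verify that the basis property $H_k(v)=\operatorname{span}\{Y_1(v),\ldots,Y_{n_k}(v)\}$ propagates from $u$ to a neighborhood; this is where the constancy of $\dim H_k$ is essential, since in general a basis vector $Y_k$ chosen as a commutator of high order can leave the level it belonged to at $u$ once the filtration dimensions jump, which is exactly the phenomenon that makes the nonregular statement only pointwise at $u$.
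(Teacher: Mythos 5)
Your proof is correct and follows essentially the same route as the paper: the identity at $u$ is deduced from $[H_m,H_l]\subseteq H_{m+l}$ via Proposition \ref{treug_prop}, and the neighborhood statement at a regular point comes from the fact that the same basis remains associated with the filtration near $u$. You have merely spelled out the details that the paper's two-line proof leaves implicit.
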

\begin{proof}
The identity \eqref{treug} follows from the fact that
$[H_m,H_l]\subseteq H_{m+l}$.

In some neighborhood of a regular point we can choose the same
basis, satisfying (i), (ii), (iii), for all points, by definition of
regularity.
\end{proof}


\begin{Definition} Consider {\it second-kind canonical coordinates}
$\Phi^u:\mathbb R^N\to U$ on $U$ defined as
\begin{equation}\label{loc_coord}\Phi^u(x_1,\ldots,x_N)
 =\operatorname{exp}(x_1Y_1)\circ
 \operatorname{exp}(x_2Y_2)\circ\ldots\circ\operatorname{exp}(x_NY_N)
 (u)\end{equation}
\end{Definition}
Due to the smoothness assumptions of the Definition \ref{cc-space}
and theorems on continuous dependence of solutions of ordinary
differential equations on parameters \cite{pon}, the mapping $\Phi^u$
is a
$C^{M+1}$-diffeomorphism onto some neighborhood of zero
$V\subseteq\mathbb R^N$.

We will construct nilpotent approximations in these coordinates
\eqref{loc_coord} in the same way as it was done in
\cite{bel,herm}. Dilations are defined like in \cite{bel,fs,herm}:
on $\mathbb R^N$ let $\delta_{\varepsilon} (x_1,x_2,\ldots,
x_N)=(x_1\varepsilon^{\operatorname{deg}Y_1},x_2\varepsilon^
{\operatorname{deg}Y_2},
\ldots,x_N\varepsilon^{\operatorname{deg}Y_N}).$ The function
$f:\mathbb R^N\to\mathbb R$ is {\it homogeneous of order} $l$, if
$f(\delta_{\varepsilon}x)=\varepsilon^lf(x).$ 

\begin{Definition}\label{odnor_vf}
A vector field $X$ on $\mathbb R^N$ is homogeneous of order $s$,
if $\delta_{\varepsilon}^*X=\varepsilon^sX,$ where the action of
dilations on a vector field is defined as
$\delta_{\varepsilon}^*X(f\circ\delta_{\varepsilon})=(Xf)
\circ\delta_{\varepsilon}.$
\end{Definition}




The proofs of the next Proposition \ref{vf_decomp} and Corollary
\ref{alg_prop} follow the scheme of \cite{herm} for $C^{\infty}$
vector fields meeting the H\"ormander's condition. We recall
briefly main steps of these proofs.
\begin{Proposition}\label{vf_decomp}
In coordinates $\Phi^u$ for the   $C^{M+1}$-smooth vector field $X_I$
the following decomposition holds:
$$X_I^{\prime}(x):=(\Phi^u)^{-1}_*X_I(\Phi^u(x))=\sum\limits_{j=1}^Na_j(x)
\frac{\partial}{\partial x_j}=$$ $$=
\sum\limits_{j=1}^N\left(\sum\limits_{\substack{|\alpha|_h\geq
\operatorname{deg}Y_j- \operatorname{deg}X_I, \\ |\alpha|\leq
M}}f_{(j,\alpha)} x^{\alpha}+o(||x||^M)\right)
\frac{\partial}{\partial x_j} \text{ for }||x||\to 0,$$ where
$\alpha=(\alpha_1,\ldots,\alpha_N)$, $|\alpha|_h=
\sum\limits_{i=1}^N\alpha_ i\operatorname{deg}Y_i$, $|\alpha|=
\sum\limits_{i=1}^N\alpha_ i$, $f_{(j,\alpha)} \in\mathbb R$,
$||x||$ is the Euclidean norm in $\mathbb R^N$.
\end{Proposition}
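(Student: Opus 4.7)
The plan is to compute the pullback $X_I' = (\Phi^u)^{-1}_*X_I$ by peeling off the factors of the second-kind coordinate map one at a time, and to track the filtration grade of iterated brackets throughout. Recall that for a $C^{M+1}$ vector field $Y_k$ and a $C^{M+1}$ vector field $Z$, the flow pullback admits the truncated Taylor expansion
\begin{equation*}
(\exp(tY_k))^{-1}_{*}Z = \sum_{m=0}^{M}\frac{(-t)^{m}}{m!}\operatorname{ad}(Y_k)^{m}Z \;+\; r_{M}(t),\qquad r_M(t)=o(|t|^{M}),
\end{equation*}
uniformly on compact sets. Iterating this identity through the $N$ factors of $\Phi^u=\exp(x_1Y_1)\circ\cdots\circ\exp(x_NY_N)$ and multiplying out (discarding all terms whose total Taylor order in $x$ exceeds $M$) yields
\begin{equation*}
X_I'(x) \;=\; \sum_{|\alpha|\leq M}\frac{(-x)^{\alpha}}{\alpha!}\bigl(\operatorname{ad}(Y_1)^{\alpha_1}\cdots\operatorname{ad}(Y_N)^{\alpha_N}X_I\bigr)\!\big|_{u}\;+\;o(\|x\|^{M}),
\end{equation*}
where the iterated bracket is read off in the tangent basis $\{\partial/\partial x_j\}$ of $\mathbb R^N$ via the identification induced by $\Phi^u$.

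Next, I would use the filtration property $[H_s,H_t]\subseteq H_{s+t}$ (Definition \ref{cc-space}) inductively to conclude that
\begin{equation*}
\operatorname{ad}(Y_1)^{\alpha_1}\cdots\operatorname{ad}(Y_N)^{\alpha_N}X_I \;\in\; H_{\operatorname{deg}X_I+|\alpha|_{h}},
\end{equation*}
since each bracket with a $Y_i$ increases the filtration grade by $\operatorname{deg}Y_i$. Applying Proposition~\ref{treug_prop} to this vector field at the point $u$, the decomposition in the basis $\{Y_1,\ldots,Y_N\}$ has vanishing coefficients in front of every $Y_j$ with $\operatorname{deg}Y_j>\operatorname{deg}X_I+|\alpha|_h$. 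At the origin of the coordinates $\Phi^u$, one has $Y_j(u)=\partial/\partial x_j|_{0}$ (a standard property of second-kind canonical coordinates, since differentiating $\Phi^u$ in $x_j$ at $x=0$ produces $Y_j(u)$). Therefore in the Taylor expansion above, the monomial $x^{\alpha}$ appears in the coefficient of $\partial/\partial x_j$ only when $\operatorname{deg}Y_j\leq\operatorname{deg}X_I+|\alpha|_h$, which is precisely the asserted inequality $|\alpha|_{h}\geq\operatorname{deg}Y_j-\operatorname{deg}X_I$.

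The remainder $o(\|x\|^M)$ is controlled by combining the flow remainders $r_M$ from each of the $N$ successive pullbacks, using that $\Phi^u$ is a $C^{M+1}$-diffeomorphism and all vector fields $Y_j$ (hence all commutators $X_I$ with $|I|_h\leq M$) are at least $C^{M+1}$-smooth by Definition~\ref{cc-space}.

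The main obstacle is the careful bookkeeping of weighted degrees through the iterated pullbacks: one must verify that when the truncated Taylor series from each of the $N$ factors $\exp(x_kY_k)$ are composed, the product term corresponding to multi-index $\alpha$ really is a polynomial-in-$x$ coefficient times the iterated bracket evaluated at $u$, up to an error genuinely of size $o(\|x\|^M)$ in the weighted sense. A second subtle point is that, because $u$ need not be regular, Proposition~\ref{treug_prop} applies only at the single point $u$ and not in a neighborhood; this is enough, however, because the iterated bracket only contributes constant (leading) coefficients to the Taylor expansion, and the variable-coefficient part is absorbed into the $o(\|x\|^M)$ remainder once one expands the $C^{M+1}$ fields $\operatorname{ad}(Y_{i_1})\cdots\operatorname{ad}(Y_{i_m})X_I$ around $u$.
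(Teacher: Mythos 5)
Your overall strategy --- pulling $X_I$ back through the $N$ flow factors of $\Phi^u$, expanding each pullback in powers of $\operatorname{ad}(Y_k)$, and invoking Proposition~\ref{treug_prop} at the single point $u$ to kill the low-weight coefficients --- is the same as the paper's, but there is a genuine gap at the central step. Your displayed formula identifies the coefficients of $X_I'(x)$ in the coordinate basis $\{\partial/\partial x_j\}$ with the coefficients of the iterated-bracket expansion of the flow pullback of $X_I$ to $u$, read off via $Y_j(u)\leftrightarrow\partial/\partial x_j$. That identification is valid only at $x=0$. For $x\neq 0$ the image $D\Phi^u(x)\langle\partial/\partial x_j\rangle$, pulled back to $T_u\mathbb M$ through the successive flows, is not $Y_j(u)$ but $Y_j(u)+\sum_k c_{jk}(x)Y_k(u)$, where the $c_{jk}$ collect the brackets $(\operatorname{ad}^{\nu_N}Y_N\ldots\operatorname{ad}^{\nu_{j+1}}Y_{j+1},Y_j)(u)$ coming from conjugating $Y_j$ by the flow factors standing to its right in \eqref{loc_coord}. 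Hence the coefficient vector $a(x)$ is obtained from your bracket-expansion coefficients $b(x)$ only after inverting the matrix $I+C(x)$ --- this is exactly the paper's identity \eqref{lev} and the final display of its proof --- and one must further check that each $c_{jk}$ consists of monomials of weighted degree at least $\operatorname{deg}Y_k-\operatorname{deg}Y_j$, so that $C(x)b(x)$ and the higher Neumann terms still satisfy the lower bound $|\alpha|_h\geq\operatorname{deg}Y_j-\operatorname{deg}X_I$. Your proposal contains neither the correction matrix nor this verification.

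The intermediate identity you assert is in fact false. Take $\mathbb R^3$ with $Y_1=\partial_1$, $Y_2=\partial_2+x_1\partial_3$ (both of weight $1$) and $Y_3=[Y_1,Y_2]=\partial_3$ (weight $2$), $u=0$. Then $\Phi^u$ is the identity map, so $(\Phi^u)^{-1}_*Y_1=\partial_1$; but the iterated pullback of $Y_1$ to the origin equals $Y_1(0)-x_2Y_3(0)$, so your formula produces a spurious contribution $\mp x_2$ to $a_3$. In this example (and, as one checks, in general) the spurious terms generated by $I+C(x)$ happen to respect the degree bound of the Proposition, so the statement you are proving is true; but the argument as written does not establish it, because it silently replaces the basis $\{(\Phi^u)^{-1}_*$-pullback of $\partial/\partial x_j\}$ of $T_u\mathbb M$ by the basis $\{Y_j(u)\}$.
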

\begin{proof} Applying to both parts of the obvious equality
$$\Phi^u_*X_I^{\prime}(x)=X_I(\Phi_u(x)),\ x\in V\subseteq \
\mathbb R^N$$ the mapping $\operatorname{exp}(-x_NY_N)_*
\ldots\operatorname{exp}(-x_1Y_1)_*$ and carrying out all the
differentiations \cite{herm} in the obtained equality
$$\sum\limits_{j=1}^Na_j(x)\operatorname{exp}(-x_NY_N)_*
\ldots\operatorname{exp}(-x_1Y_{1})_*\Phi^u_*\left(\frac{\partial}
{\partial x_j}\right)=$$
\begin{equation}\label{tozhd}=\operatorname{exp}(-x_NY_N)_*
\ldots\operatorname{exp}(-x_1Y_1)_*X_I(\Phi^u(x)),\end{equation}
we get the identity
$$\sum\limits_{|\nu|=0}^M\frac{(-x_1)^{\nu_1}}{\nu_1!}\frac{(-x_2)^
{\nu_2}}{\nu_2!}
\ldots\frac{(-x_N)^{\nu_N}}{\nu_N!}(\operatorname{ad}^{\nu_N}Y_N\ldots
\operatorname{ad}^{\nu_2}Y_2
\operatorname{ad}^{\nu_1}Y_1,X_I)(u)+o(||x||^M)=$$
\begin{equation}\label{tozhd1}
=\sum\limits_{|\nu|=0}^Ma_j(x)\frac{(-x_{j+1})^{\nu_{j+1}}}{\nu_{j+1}!}
\ldots\frac{(-x_N)^{\nu_N}}{\nu_N!}(\operatorname{ad}^{\nu_N}Y_N\ldots
\operatorname{ad}^{\nu_{j+1} }Y_{j+1},Y_j)(u)+o(||x||^M),
\end{equation}
where $$(\operatorname{ad}Z,Y)=[Z,Y];\
(\operatorname{ad}^{\nu+1}Z,Y)=[Z,(\operatorname{ad}^{\nu}Z,Y)];\
[\operatorname{ad}^0Z,Y]=Y.$$ According to Proposition
\ref{treug_prop}, the following decomposition holds:
$$(\operatorname{ad}^{\nu_N}Y_N\ldots\operatorname{ad}^{\nu_2}Y_2
\operatorname{ad}^{\nu_1}Y_1,X_I)(u)=\sum\limits_{k=1}^N\beta_{\nu}^k
Y_k(u),$$ where
\begin{equation}\label{beta}
\beta_{\nu}^k=0 \text{ for }
|\nu|_h=\sum\limits_{j=1}^N\nu_j\operatorname{deg}Y_j<\operatorname{deg}
Y_k-\operatorname{deg}X_I.
\end{equation}

Denoting
$$b_k(x)=\sum\limits_{|\nu|=0}^M\beta_{\nu}^k\frac{(-x_1)^{\nu_1}}
{\nu_1!}\frac{(-x_2)^{\nu_2}}{\nu_2!}
\ldots\frac{(-x_N)^{\nu_N}}{\nu_N!}$$
and
$$c_{jk}(x)=\frac{(-x_{j+1})^{\nu_{j+1}}}{\nu_{j+1}!}
\ldots\frac{(-x_N)^{\nu_N}}{\nu_N!}\gamma_{\nu,j}^k,\quad\text{where }(\operatorname{ad}^{\nu_N}Y_N\ldots\operatorname{ad}^{\nu_{j+1}
}Y_{j+1},Y_j)(u)=\sum\limits_{k=1}^N\gamma_{\nu,j}^kY_k(u),$$
we derive
\begin{equation}\label{lev}\sum\limits_{j=1}^Na_j(x)\left[Y_j(u)+
\sum\limits_{k=1}^Nc_{jk}(x)Y_k(u)\right]=\sum\limits_{k=1}^Nb_k(x)Y_k(u)+o(||x||^M)
 \text{ for }||x||\to 0,
\end{equation}
Here $b_k(x)$ is a polynomial function beginning from terms $x_1^{\nu_1}\ldots x_N^{\nu_N}$ of order
$|\nu|_h\geq\operatorname{deg}Y_k-\operatorname{deg}X_I$, while $||(c_{jk}(x))||<1$ in some neighborhood of zero. Denoting
$$a(x)=(a_1(x),a_2(x),\ldots,a_N(x)),$$
$$b(x)=(b_1(x),b_2(x),\ldots,b_N(x)),\ C(x)=(c_{jk}(x))_{j,k=1}^N,$$
we finally obtain
$$a(x)=(I+C(x))^{-1}(b(x)+o(||x||^M))=b(x)-C(x)b(x)+o(||x||^M)
\text{ for }||x||\to 0,$$ from where, according to the properties
of $b_k(x)$, the proposition follows.
\end{proof}

Since $\operatorname{deg}X_I=|I|_h$ and the vector field $\frac{\partial}
{\partial x_j}$ is homogeneous of order $-\operatorname{deg}Y_j$, we have
\begin{Corollary}
The vector field $X_{I}^{\prime}\in C^{M}$, $|I|_h\leq M$, can be written as
$$X_{I}^{\prime}(x)=(X_{I}^{\prime})^{(-|I|_h)}(x)+(X_{|I|_h}^{\prime})^
{(-|I|_h+1)}(x)+
\ldots+(X_{I}^{\prime})^{(-|I|_h+M)}(x)+o(||x||^M) \text{ for
}||x||\to 0,$$ where the $C^{\infty}$-smooth vector field
$(X_{I}^{\prime})^{(-j)}$ is homogeneous of order $-j$.
\end{Corollary}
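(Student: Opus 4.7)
The plan is to derive the Corollary directly from Proposition \ref{vf_decomp} by regrouping its polynomial part according to $\delta_\varepsilon$-homogeneity.

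First I would record the elementary observation, implicit in Definition \ref{odnor_vf} together with the explicit form of the dilations $\delta_\varepsilon(x_1,\dots,x_N)=(x_1\varepsilon^{\operatorname{deg}Y_1},\dots,x_N\varepsilon^{\operatorname{deg}Y_N})$, that the monomial vector field $x^\alpha\tfrac{\partial}{\partial x_j}$ is $\delta_\varepsilon$-homogeneous of order $|\alpha|_h-\operatorname{deg}Y_j$, since $x^\alpha$ is homogeneous of order $|\alpha|_h$ and $\tfrac{\partial}{\partial x_j}$ is homogeneous of order $-\operatorname{deg}Y_j$.

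Next, starting from the expansion
$$X_I'(x)=\sum_{j=1}^{N}\sum_{\substack{|\alpha|_h\geq\operatorname{deg}Y_j-|I|_h\\ |\alpha|\leq M}} f_{(j,\alpha)}\,x^\alpha\,\tfrac{\partial}{\partial x_j}+o(\|x\|^M)$$
provided by Proposition \ref{vf_decomp}, I would sort the monomials by their homogeneous order $k:=|\alpha|_h-\operatorname{deg}Y_j$ and, for each $s=0,1,\ldots,M$, set
$$(X_I')^{(-|I|_h+s)}(x):=\sum_{j=1}^{N}\sum_{\substack{|\alpha|_h-\operatorname{deg}Y_j=-|I|_h+s\\ |\alpha|\leq M}} f_{(j,\alpha)}\,x^\alpha\,\tfrac{\partial}{\partial x_j}.$$
By construction each such vector field has constant polynomial coefficients, hence is $C^\infty$-smooth (in fact real-analytic), and is exactly $\delta_\varepsilon$-homogeneous of order $-|I|_h+s$, by the observation above. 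The constraint $|\alpha|_h\geq\operatorname{deg}Y_j-|I|_h$ from Proposition \ref{vf_decomp} is equivalent to $k\geq 0$, so no component of order below $-|I|_h$ appears, which pins down the leading order of the expansion.

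Finally, the remaining homogeneous pieces (of order $>-|I|_h+M$) together with the $o(\|x\|^M)$ analytic remainder from Proposition \ref{vf_decomp} are collected into the new remainder, which is $o(\|x\|^M)$ as $\|x\|\to 0$. I do not expect any real obstacle: the argument reduces to sorting finitely many terms by their $\delta_\varepsilon$-weight. The only point requiring a brief check is that the regrouping is consistent, i.e.\ each monomial $x^\alpha\tfrac{\partial}{\partial x_j}$ enters exactly one homogeneous component, which is immediate since the assignment $(j,\alpha)\mapsto |\alpha|_h-\operatorname{deg}Y_j$ is a well-defined map into $\mathbb{Z}$.
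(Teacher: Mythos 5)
Your argument is correct and is essentially the paper's own proof: the paper derives the Corollary in a single line from Proposition \ref{vf_decomp} by observing that $\partial/\partial x_j$ is homogeneous of order $-\deg Y_j$, so each monomial $x^{\alpha}\partial/\partial x_j$ is homogeneous of order $|\alpha|_h-\deg Y_j\geq -|I|_h$, which is exactly the regrouping you carry out. The one step you make explicit that the paper leaves implicit---that the finitely many homogeneous pieces of order above $-|I|_h+M$ can be folded into the $o(\|x\|^M)$ remainder---is already built into the paper's formulation of the Corollary, so your proposal matches the intended argument.
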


\begin{Corollary}\label{alg_prop}
The $C^{M+1}$-smooth vector fields
$\{\widehat{X}^u_{I}\}_{|I|_h\leq M}$ on $\mathbb M$,
where $\widehat{X}^u_{I}=
\Phi^u_*\langle(X_{I}^{\prime})^{(-|I|_h)}\rangle$,
 constitute a nilpotent Lie algebra
\begin{equation}\label{lie}L=\operatorname{Lie}
\{\widehat{X}^u_1,\ldots,\widehat{X}^u_q\}\end{equation}
and we have $$H_l(u)=\widehat{H}_l(u), \quad\text{ where }
\widehat{H}_l=\operatorname{span}\{\widehat{X}^u_I\}_{|I|_h\leq l}.$$
The vector fields $\{\widehat{Y}^u_1,\widehat{Y}^u_2,\ldots,\widehat{Y}^u_N\},$
chosen from the commutators $\widehat{X}^u_{I}$ in the same way as the basis
\eqref{bas} from the commutators $X_{I}$, form a basis, associated with the filtration \eqref{filtr} on some neighborhood $\widehat{U}$ of the point $u$.
\end{Corollary}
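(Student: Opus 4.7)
The plan is to extract everything from Proposition \ref{vf_decomp} together with the weighted-homogeneity structure on $\mathbb R^N$ introduced in Definition \ref{odnor_vf}.

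For the Lie algebra and nilpotency statement, I first observe that the bracket of two homogeneous vector fields of orders $-p$ and $-q$ is itself homogeneous of order $-(p+q)$; this is immediate from Definition \ref{odnor_vf}. A smooth vector field on $\mathbb R^N$ homogeneous of order $-s$ has coefficient of $\partial/\partial x_j$ a smooth function homogeneous of weight $\operatorname{deg} Y_j - s$; since any $C^\infty$-smooth function on $\mathbb R^N$ expands into weighted monomials $x^\alpha$ of nonnegative weight $|\alpha|_h$, such a coefficient can be nonzero only when $\operatorname{deg} Y_j\geq s$. Hence for $s>M$ every coefficient vanishes and the field is identically zero. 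In particular, all iterated brackets of $\widehat{X}^u_1,\ldots,\widehat{X}^u_q$ of total homogeneous order exceeding $M$ vanish, so the Lie algebra $L$ of \eqref{lie} is finite-dimensional, graded and nilpotent.

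For the equality $H_l(u)=\widehat{H}_l(u)$, the crux is to compute $\widehat{X}^u_I(u)$ exactly. Differentiating $\Phi^u(x_1,\ldots,x_N)=\exp(x_1Y_1)\circ\cdots\circ\exp(x_NY_N)(u)$ at the origin yields $(\Phi^u)_*(\partial/\partial x_j)|_0 = Y_j(u)$, hence $Y_j'(0)=\partial/\partial x_j|_0$. Applied to $X_{I_j}=Y_j$, Proposition \ref{vf_decomp} shows that the leading piece $(Y_j')^{(-\operatorname{deg} Y_j)}$ has coefficient of $\partial/\partial x_k$ a weighted-homogeneous polynomial of weight $\operatorname{deg} Y_k-\operatorname{deg} Y_j$; matching with $Y_j'(0)=\partial/\partial x_j|_0$ forces the value of this polynomial at $x=0$ to equal $\delta_{jk}$. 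Consequently $\widehat{Y}^u_j(u)=Y_j(u)$, giving $H_l(u)\subseteq\widehat{H}_l(u)$. Conversely, for any commutator $X_I$ with $|I|_h=d$, the same decomposition shows that the constant term at $x=0$ of the leading piece contributes only along directions with $\operatorname{deg} Y_k=d$, so $\widehat{X}^u_I(u)\in H_d(u)\subseteq H_l(u)$ whenever $|I|_h\leq l$, giving the reverse inclusion.

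For the basis property on a neighborhood $\widehat{U}$, linear independence of $\{\widehat{Y}^u_j(u)\}$ at $u$ transfers by continuity to a full neighborhood of $u$. The weighted homogeneity of the nilpotent approximation makes the filtration $\widehat{H}_k$ of constant dimension on $\widehat{U}$ (every point is regular for the nilpotent approximation), so the minimality conditions (ii) and (iii) of Definition \ref{min_bas}, built into the definition $\widehat{Y}^u_j:=\widehat{X}^u_{I_j}$ via the multi-indices $I_j$ inherited from $Y_j=X_{I_j}$, persist on $\widehat{U}$. The main obstacle I anticipate is extracting from Proposition \ref{vf_decomp} the sharp identity $\widehat{Y}^u_j(u)=Y_j(u)$ rather than just the weaker containment $\widehat{Y}^u_j(u)\in H_{\operatorname{deg} Y_j}(u)$; once this is secured, both the algebraic and neighborhood claims follow formally from weighted homogeneity and continuity.
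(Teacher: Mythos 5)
Your argument is correct and follows essentially the same route as the paper's proof: both rest on the homogeneous decomposition of Proposition \ref{vf_decomp}, the identity $(\Phi^u)_*\langle\partial/\partial x_j\rangle(0)=Y_j(u)$ (whence $\widehat{Y}^u_j(u)=Y_j(u)$ and, by counting weights at the origin, $\widehat{X}^u_I(u)\in H_{|I|_h}(u)$), and continuity of linear independence for the neighborhood claim. The only soft spot is your appeal to weighted homogeneity for the constancy of $\dim\widehat{H}_k$ near $u$ --- dilations control dimensions only along the rays $\varepsilon\mapsto\Delta^u_\varepsilon v$, and the constancy really comes from the transitive local action of the nilpotent group constructed in Proposition \ref{lift} --- but the paper's own proof is equally terse on this point, settling for linear independence near $u$.
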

\begin{proof}
The smoothness assertion follows from the fact that $\Phi^u$
is a $C^{M+1}$-diffeomorphism and that
$\Phi^u_*[X,Y]=[\Phi^u_*X,\Phi^u_*Y].$ The Lie algebra is nilpotent since for $|I|_h>M$ we have
 $(X_I^{\prime})^{(-|I|_h)}=0$.

To prove the second part of the corollary it is sufficient to note
that $(\widehat{Y}_i^u)^{\prime}(0)=\frac{\partial}{\partial x_i}$
due to differentiation rules and homogeneity of the vector fields
$(\widehat{Y}_i^u)^{\prime}$. Thus the vector fields
$\widehat{Y}_i^u$ are linearly independent at the point $u$ and
hence in some its neighborhood. Moreover, if
$$X_{I}(v)=\sum\limits_{i=1}^N\xi_i(v)Y_i(v),\
\widehat{X}_I(v)=\sum\limits_{i=1}^N\eta_i(v)Y_i(v),$$ then
$\xi_i(u)=\eta_i(u)$ for $n_{|I|_h-1}+1\leq i\leq N$. Indeed, in
coordinates \eqref{loc_coord} we have
$$X_I^{\prime}(0)=(\Phi^u_*)^{-1}X_I(u)=\sum\limits_{i=1}^N\xi_i(u)
\frac{\partial} {\partial x_i};\
\widehat{X}_I^{\prime}(0)=(\Phi^u_*)^{-1}\widehat{X}_I(u)=
\sum\limits_{i=1}^N\eta_i(u)\frac{\partial} {\partial x_i};$$
$$X_I^{\prime}(x)=\widehat{X}_I^{\prime}(x)+Z(x),$$ where the vector field
$Z(x)=\sum\limits_{i=1}^Nz_i(x)\frac{\partial} {\partial x_i}$
consists of summands having order of homogeneity bigger than
$-|I|_h$, hence $z_i(0)=0$ for $n_{|I|_h-1}+1\leq i\leq N$.
\end{proof}

W.l.o.g. assume that $U=\widehat{U}$.

\begin{Definition}
The vector fields $\{\widehat{X}^u_{I}\}_{|I|_h\leq M}$ are called
{\it nilpotent approximations} of the vector fields
$\{X_I\}_{|I|_h\leq M}$.
\end{Definition}

\begin{Definition}
Define a dilation group, associated with the basis \eqref{bas},
$\Delta^v_{\varepsilon}=\Phi^v\delta^v_{\varepsilon}(\Phi^v)^{-1}$
on $U$: if
$$w=\operatorname{exp}(w_1Y_1)
\circ\operatorname{exp}(w_2Y_2)\circ\ldots\circ\operatorname{exp}(w_NY_N)
(v),$$ then
\begin{equation}\label{dil}\Delta_{\varepsilon}^v w=\operatorname{exp}(
w_1\varepsilon^
{\operatorname{deg}Y_1}Y_1)\circ\operatorname{exp}(w_2\varepsilon^
{\operatorname{deg}Y_2}Y_2)
\circ\ldots\circ\operatorname{exp}(w_N\varepsilon^{\operatorname{deg}
Y_N}Y_N)(v).\end{equation}
\end{Definition}

From Proposition \ref{vf_decomp} it follows immediately

\begin{Corollary}\label{gromov}
On $U$ the following convergence takes place:
$$(\Delta^u_{\varepsilon^{-1}})_*\varepsilon^{|I|_h}X_{I}
(\Delta^u_{\varepsilon}(v))\to\widehat{X}_{I}^u(v)
\text{ for } \varepsilon\to 0, |I|_h\leq M.$$
\end{Corollary}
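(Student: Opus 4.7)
The plan is to translate the statement into the canonical coordinate chart $\Phi^u$ so that the dilation $\Delta^u_\varepsilon$ becomes the standard coordinate dilation $\delta_\varepsilon$, and then directly apply the decomposition from Proposition \ref{vf_decomp}. Since $\Delta^u_\varepsilon = \Phi^u \circ \delta^u_\varepsilon \circ (\Phi^u)^{-1}$ and pushforward is functorial, it suffices to show, setting $x=(\Phi^u)^{-1}(v)$, that
\[
\varepsilon^{|I|_h}(\delta_{\varepsilon^{-1}})_*\,X_I^{\prime}(\delta_\varepsilon x)\;\longrightarrow\;(X_I^{\prime})^{(-|I|_h)}(x)\quad\text{as }\varepsilon\to 0,
\]
because the right-hand side pushed forward by $\Phi^u_*$ is precisely $\widehat{X}_I^u(v)$ by the definition of nilpotent approximation given in Corollary \ref{alg_prop}.

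First, I would record the basic homogeneity computation. For a monomial vector field $Y = x^\alpha\partial_{x_j}$ of homogeneous order $s=|\alpha|_h-\deg Y_j$ (with $|\alpha|_h=\sum_i\alpha_i\deg Y_i$), a direct coordinate calculation gives $Y(\delta_\varepsilon x)=\varepsilon^{|\alpha|_h}x^\alpha\partial_{x_j}|_{\delta_\varepsilon x}$, while $(\delta_{\varepsilon^{-1}})_*\partial_{x_j}|_{\delta_\varepsilon x}=\varepsilon^{-\deg Y_j}\partial_{x_j}|_x$, so altogether $(\delta_{\varepsilon^{-1}})_*Y(\delta_\varepsilon x)=\varepsilon^s Y(x)$. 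Multiplying by $\varepsilon^{|I|_h}$ rescales the result by $\varepsilon^{|I|_h+s}$.

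Next, apply this termwise to the expansion of Proposition \ref{vf_decomp}:
\[
X_I^{\prime}(x)=\sum_{s=-|I|_h}^{-|I|_h+M}(X_I^{\prime})^{(s)}(x)+o(\|x\|^M),
\]
where $(X_I^{\prime})^{(s)}$ is a polynomial vector field homogeneous of order $s$. By the computation above, each summand contributes $\varepsilon^{|I|_h+s}(X_I^{\prime})^{(s)}(x)$; the leading term $s=-|I|_h$ survives with coefficient $1$, while all higher-order terms carry a positive power of $\varepsilon$ and vanish in the limit. Fixing $x$ in a compact coordinate neighborhood of $0$, one finishes by estimating the remainder: since each $d_j=\deg Y_j\geq 1$, we have $\|\delta_\varepsilon x\|\leq\varepsilon\|x\|$ for $\varepsilon\leq 1$, so the scalar factor multiplying $\partial_{x_j}$ in the error term is of size $\varepsilon^{|I|_h-d_j}\cdot o(\|\delta_\varepsilon x\|^M)=o(\varepsilon^{|I|_h-d_j+M})$, and $|I|_h-d_j+M\geq 1>0$ since $d_j\leq M$ and $|I|_h\geq 1$. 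Hence the remainder contribution tends to zero uniformly on compact subsets.

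The main obstacle, if any, is purely bookkeeping: one must verify that the exponent $|I|_h-d_j+M$ is strictly positive for every coordinate index $j$ and every multi-index $I$ with $|I|_h\leq M$, which in turn uses the structural fact from Proposition \ref{vf_decomp} that the coefficient of $\partial_{x_j}$ in $X_I^{\prime}$ has no polynomial terms of $h$-degree less than $d_j-|I|_h$; without this vanishing, the term $s=-|I|_h$ alone would not describe the limit. Once this is in place, pushing the coordinate identity back to $U$ via $\Phi^u_*$ yields the claimed convergence of $(\Delta^u_{\varepsilon^{-1}})_*\varepsilon^{|I|_h}X_I(\Delta^u_\varepsilon v)$ to $\widehat{X}_I^u(v)$.
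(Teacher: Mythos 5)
Your proposal is correct and follows essentially the same route as the paper: pass to the coordinates $\Phi^u$, apply the expansion of Proposition \ref{vf_decomp}, and track the power of $\varepsilon$ on each homogeneous monomial term so that only the order $-|I|_h$ part survives. Your treatment is in fact slightly more careful than the paper's, since you explicitly verify that the $o(\|\delta_\varepsilon x\|^M)$ remainder, after multiplication by $\varepsilon^{|I|_h-\deg Y_j}$, still carries a strictly positive power of $\varepsilon$.
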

\begin{proof}
Really, in coordinates \eqref{loc_coord} we have
$$(\delta_{\varepsilon^{-1}})_*\varepsilon^{|I|_h}X_{I}^{\prime}
(\delta_{\varepsilon}(x))$$$$=\delta_{\varepsilon}^*
\varepsilon^{|I|_h}\sum\limits_{j=1}^N\left(\sum
\limits_{\operatorname{deg}Y_j- |I|_h\leq|\alpha|_h\leq
M}f_{(j,\alpha)} (\varepsilon x)^{\alpha}+o(||\varepsilon x||^M)
\right)\frac{\partial}{\partial x_j}
$$
$$=
\varepsilon^{|I|_h}\sum\limits_{j=1}^N\varepsilon^{\operatorname{deg}Y_j}
\left(\sum\limits_{\operatorname{deg}Y_j- |I|\leq|\alpha|_h\leq
M}f_{(j,\alpha)} \varepsilon^{|\alpha|_h}
x^{\alpha}+o(||\varepsilon x||^M) \right)\frac{\partial}{\partial
x_j}\to$$ $$\to \sum\limits_{j=1}^N
\left(\sum\limits_{|\alpha|_h=\operatorname{deg}Y_j-
|I|_h}f_{(j,\alpha)} x^{\alpha} \right)\frac{\partial}{\partial
x_j}=(X_I^{\prime})^{(-|I|_h)}$$ for $\varepsilon\to 0$.
\end{proof}

Introduce a distance function on $U$, generated by nilpotent
approximations, in a similar way as in \eqref{rho}:
$$\rho^u(v,w)=\inf\{\delta>0\mid \text{ there is a curve}
\gamma:[0,1]\to U, \text{ such that }$$
\begin{equation}\label{rho_u}\gamma(0)=v, \gamma(1)=w,
\dot{\gamma}(t)=\sum\limits_{|I|_h\leq M} w_I
\widehat{X}^u_I(\gamma(t)), |w_I|<\delta^{|I|_h}\}.\end{equation}

Actually, $\rho^u$ is again a quasimetric; the generalized triangle
inequality will be proved in the next subsection.

\begin{Proposition}
The quasimetric $\rho^u$ meets the conical property
\begin{equation}\label{cone_prop}\rho^u(\Delta_{\varepsilon}^uv,\Delta_
{\varepsilon}^uw)= \varepsilon \rho^u(v,w).
\end{equation}
\end{Proposition}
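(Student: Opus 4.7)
The plan is to prove both inequalities $\rho^u(\Delta^u_\varepsilon v,\Delta^u_\varepsilon w)\le\varepsilon\rho^u(v,w)$ and $\rho^u(v,w)\le\varepsilon^{-1}\rho^u(\Delta^u_\varepsilon v,\Delta^u_\varepsilon w)$ by transporting admissible curves through the diffeomorphism $\Delta^u_\varepsilon$ and using homogeneity of the nilpotent approximations.

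First, I would fix $\delta>\rho^u(v,w)$ and pick an admissible curve $\gamma\colon[0,1]\to U$ with $\gamma(0)=v$, $\gamma(1)=w$, $\dot\gamma(t)=\sum_{|I|_h\le M}w_I\widehat{X}^u_I(\gamma(t))$, and $|w_I|<\delta^{|I|_h}$. Then I set $\tilde\gamma(t):=\Delta^u_\varepsilon\gamma(t)$; its endpoints are $\Delta^u_\varepsilon v$ and $\Delta^u_\varepsilon w$, and its velocity equals $\dot{\tilde\gamma}(t)=\sum_I w_I\,\bigl((\Delta^u_\varepsilon)_*\widehat{X}^u_I\bigr)(\tilde\gamma(t))$.

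The key step is the identity
$$(\Delta^u_\varepsilon)_*\widehat{X}^u_I=\varepsilon^{|I|_h}\widehat{X}^u_I.$$
This follows by unwinding the definitions: by Corollary \ref{alg_prop} we have $\widehat{X}^u_I=\Phi^u_*\langle(X_I')^{(-|I|_h)}\rangle$, and by construction $\Delta^u_\varepsilon=\Phi^u\circ\delta_\varepsilon\circ(\Phi^u)^{-1}$, so
$$(\Delta^u_\varepsilon)_*\widehat{X}^u_I=\Phi^u_*(\delta_\varepsilon)_*(X_I')^{(-|I|_h)}.$$
Since $(X_I')^{(-|I|_h)}$ is homogeneous of order $-|I|_h$ in the sense of Definition \ref{odnor_vf} (i.e.\ $\delta_\varepsilon^*(X_I')^{(-|I|_h)}=\varepsilon^{-|I|_h}(X_I')^{(-|I|_h)}$), the pushforward by $\delta_\varepsilon$ multiplies it by $\varepsilon^{|I|_h}$, and pushing further by $\Phi^u_*$ gives exactly $\varepsilon^{|I|_h}\widehat{X}^u_I$. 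Consequently
$$\dot{\tilde\gamma}(t)=\sum_{|I|_h\le M}(\varepsilon^{|I|_h}w_I)\widehat{X}^u_I(\tilde\gamma(t)),\qquad |\varepsilon^{|I|_h}w_I|<(\varepsilon\delta)^{|I|_h},$$
so $\tilde\gamma$ is admissible for the definition of $\rho^u$ with parameter $\varepsilon\delta$. Hence $\rho^u(\Delta^u_\varepsilon v,\Delta^u_\varepsilon w)\le\varepsilon\delta$, and letting $\delta\downarrow\rho^u(v,w)$ yields $\rho^u(\Delta^u_\varepsilon v,\Delta^u_\varepsilon w)\le\varepsilon\rho^u(v,w)$.

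For the reverse inequality I would apply the same argument to the pair $\Delta^u_\varepsilon v,\Delta^u_\varepsilon w$ with dilation factor $\varepsilon^{-1}$, using $\Delta^u_{\varepsilon^{-1}}\Delta^u_\varepsilon=\mathrm{id}$, to obtain $\rho^u(v,w)\le\varepsilon^{-1}\rho^u(\Delta^u_\varepsilon v,\Delta^u_\varepsilon w)$. The two inequalities together give \eqref{cone_prop}. The only delicate point is to keep track of signs and direction of homogeneity (the paper's convention $\delta_\varepsilon^*X=\varepsilon^sX$ puts $s=-|I|_h$ for $\widehat{X}^u_I$, which reverses to $+|I|_h$ after pushforward); once this is correctly unpacked, the argument is essentially a bookkeeping calculation and poses no real obstacle.
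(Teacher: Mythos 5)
Your proof is correct and follows essentially the same route as the paper: both push an admissible curve through $\Delta^u_\varepsilon$, use the homogeneity identity $(\Delta^u_\varepsilon)_*\widehat{X}^u_I=\varepsilon^{|I|_h}\widehat{X}^u_I$ to rescale the coefficients, and recover the reverse inequality from the fact that $\Delta^u_{\varepsilon^{-1}}$ inverts the correspondence between admissible curves. Your version merely makes the homogeneity bookkeeping and the two-sided inequality structure more explicit than the paper does.
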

\begin{proof}
By definition, $\rho^u(v,w)$ is the infimum of
$\underset{|I|_h\leq M}\max\{|a_I|^{1/|I|_h}\}$ over all curves
$\gamma$ such that
$$\begin{cases}
\dot{\gamma}(t)=\sum\limits_{|I|_h\leq M} a_I
\widehat{X}^u_I(\gamma(t)),\\ \gamma(0)=v, \gamma(1)=w.
\end{cases}$$
 Consider the curve
\begin{equation}\label{gamma_eps}
\gamma_{\varepsilon}(t)=\Delta_{\varepsilon}^u\gamma(t).\end{equation}
Due to homogeneity of the vector fields $\widehat{X}^u_I$ we have
$$\begin{cases}
\dot{\gamma_{\varepsilon}}(t)=\sum\limits_{|I|_h\leq M}
a_I\varepsilon^{|I|_h}
\widehat{X}^u_I(\gamma_{\varepsilon}(t)),\\
\gamma_{\varepsilon}(0)=\Delta_{\varepsilon}^uv,
\gamma_{\varepsilon}(1)=\Delta_{\varepsilon}^uw.
\end{cases}$$ Note that all curves connecting the points
$\Delta_{\varepsilon}^uv$ � $\Delta_ {\varepsilon}^uw$ have the form
\eqref{gamma_eps}: really, let $\kappa(t)$ be an arbitrary curve connecting the points $\Delta_{\varepsilon}^uv$ and
$\Delta_ {\varepsilon}^uw$, then the curve $\gamma(t)=\Delta_
{\varepsilon^{-1}}^u\kappa(t)$ connects the points $v$ and $w$. Hence
$ \rho^u(\Delta_{\varepsilon}^uv,\Delta_ {\varepsilon}^uw)$ is the infimum of $\varepsilon\underset{|I|_h\leq
M}\max\{|a_I|^{1/|I|_h}\}$ over $\gamma$, from where the proposition follows.
\end{proof}

\section{The lifting construction and further properties of
 the quasimetrics $\rho$ and $\rho^u$}

In this section we first recall the lifting construction proposed
in by L.~Rotshild and E.~M. Stein \cite{rs} and developed in many
other papers (\cite{good,hormel,cnsw, jean, bram1} etc.). We
present this construction in the form suitable for our purposes,
making essentially a synthesis of the ideas of papers \cite{cnsw}
and \cite{jean}, in order to get a (quasi)metric-decreasing
embedding of our C-C space into a regular one.

Using this embedding and results  for regular quasimetric C-C
spaces \cite{vk,vk1} we will derive some important geometric
properties of the quasimetrics $\rho$ and $\rho^u$, in particular
prove the generalized triangle inequality for both of them.
Crucial for proving main theorems of the next section is the
``rolling-of the-box lemma'' (Proposition \ref{prokat}).

Let us recall the construction of a free nilpotent Lie algebra ${\cal
N}_{d_1,\ldots,d_q}^M$ with $q$ generators ${\cal X}_1,\ldots,{\cal
X}_q$ of weights $\{d_i\}_{i=1}^q$ and depth $M$ \cite{cnsw}.

Let ${\cal F}_q$ be a free (infinite-dimensional) Lie algebra with
$q$ generators, i. e. the only interrelation between commutators of
vector fields $\{{\cal X}_i\}$ are  the scewcommutativity and the
Jacobi idenity. Introduce on ${\cal F}_q$ dilations acting as
\begin{equation}\label{dil_n}\delta_{\varepsilon}
(\sum\limits_{j=1}^qc_j{\cal X}_j)=
\sum\limits_{j=1}^qc_j\varepsilon^{d_j}{\cal X}_j,\quad
\delta_{\varepsilon}({\cal X}_I)=\varepsilon^{|I|_h}{\cal
X}_I.\end{equation} Consider subspaces ${\cal F}_q^l$,
 invariant of order $l$
under dilations \eqref{dil_n}. Then ${\cal
F}_q=\bigoplus\limits_{l=1}^{\infty}{\cal F}_q^l.$ Let
\begin{equation}\label{al_svob} {\cal N}={\cal
N}_{d_1,\ldots,d_q}^M={\cal F}_q/I_M,\quad\text{ where
}I_M=\bigoplus\limits_{l>M}{\cal F}_q^l\end{equation} is an Lie
algebra ideal in ${\cal F}_q$. Note that ${\cal F}_q/I_M$
isomorphic to the direct sum $\bigoplus\limits_{l\leq M}{\cal
F}_q^l.$

Let $\psi:{\cal N}\to \bigoplus\limits_{l\leq M}{\cal F}_q^l $ be a Lie algebra isomorphism and $ X_j=\psi({\cal X}_j)$.
 Denote
\begin{equation}\label{dimN}\tilde{N}=\tilde{N}(d_1,\ldots,d_q,M)=
\operatorname{dim} {\cal N}_{d_1,\ldots,d_q}^M.\end{equation}

\begin{Definition} The vector fields $\tilde{X}_1,\tilde{X}_2,\ldots,
\tilde{X}_q$ on $\tilde{U}\subseteq\tilde{\mathbb M}$,
defining a filtration of the form \eqref{filtr}, are called {\it free up to the order $s$} at the point $u\in \tilde{U}$, if
$\operatorname{dim}H_s(u)=\tilde{N}(d_1,\ldots,d_q,s)$.\end{Definition}

\begin{Remark}[\rm\cite{rs,cnsw}]\label{svob_reg} If the vector fields
$\tilde{X}_1,\tilde{X}_2,\ldots, \tilde{X}_q$ on
$\tilde{U}\subseteq\tilde{\mathbb M}$ are free up to the order $M$
at the point $u\in\tilde{U}$, where $M$ is the depth of the C-C
space $\mathbb M$, then the point $u$ is regular.\end{Remark}

The proof of the next proposition follows the same lines as the
proof of a similar assertion in \cite{jean} for the case of smooth
vector fields meeting the H\"ormander's condition. We recall this
proof, since some of its details are needed below.

\begin{Proposition}\label{lift}Let all conditions of Definition
$\ref{cc-space}$ be satisfied and $\tilde{N}$ be the
dimension defined by \eqref{dimN} of the corresponding
free Lie algebra. Consider the manifold
$\tilde{\mathbb M}=\mathbb M\times\mathbb
R^{\tilde{N}-N}$ of the dimension $\tilde{N}$.
Then there are a neighborhood $\tilde{U}$ of the point $(u,0)$
 in $\tilde{\mathbb M}$,
a neighborhood $U$ of the point $u$, where $U\times\{0\}\subseteq\tilde{U}$, coordinates $(y,z)$ on $\tilde{U}$ and two systems of  $C^M$-smooth vector fields
\begin{equation}\label{lift_vf}
\tilde{X}_k(y,z)=X_k(y)+\sum\limits_{j=N+1}^{\tilde{N}}b_{kj}(y,z)\frac
{\partial} {\partial z_j}
\text{ and }\widehat{\tilde{X}}_k(y,z)=\widehat{X}_k^u(y)+\sum\limits_
{j=N+1}^{\tilde{N}} b_{kj}(y,z)\frac{\partial} {\partial z_j},\
\end{equation}
$k=1,2,\ldots,q$, defining a C-C structure of depth  $M$ on $\tilde{U}\subseteq\tilde{\mathbb M}$and, hence, free up to order $M$ on $\tilde{U}$. Here $b_{jk}(y,z)$ are polynomial functions on  $\tilde{U}$, such that the vector fields
$\sum\limits_{j=N+1}^{\tilde{N}} b_{kj}(y,z)\frac{\partial}
{\partial z_j}$ are homogeneous of order $-d_k$, $k=1,2,\ldots,q$.

All points of some neighborhood $\tilde{V}=\tilde{V}(\tilde{u})
\subseteq\tilde{U}$
 are regular.
\end{Proposition}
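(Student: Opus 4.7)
The plan is to follow the Rothschild--Stein lifting scheme in the variant developed by Jean and synthesized with the approach of \cite{cnsw}, adapted here to the weighted nonregular setting. The starting point is the free nilpotent graded Lie algebra $\mathcal{N}=\mathcal{N}_{d_1,\ldots,d_q}^M$, realized by left-invariant $C^{\infty}$ vector fields $\mathcal{X}_1^{\ast},\ldots,\mathcal{X}_q^{\ast}$ on the corresponding free nilpotent group $G\cong\mathbb{R}^{\tilde{N}}$, homogeneous of weight $-d_k$ with respect to the dilation \eqref{dil_n}. Fix an ordered Hall-type basis $\{\mathcal{X}_{I_j}\}_{j=1}^{\tilde{N}}$ of $\mathcal{N}$ with nondecreasing weights $|I_j|_h\leq M$, arranged so that the first $N$ brackets, when interpreted on $\mathbb{M}$, coincide with the basis $\{Y_j\}$ associated with the filtration at $u$ in the sense of Definition~\ref{min_bas}. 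The remaining indices $j>N$ label precisely the free commutators that become linearly dependent at $u$; the corresponding ``missing'' directions are the new coordinates $z_{N+1},\ldots,z_{\tilde{N}}$, each of prescribed weight $|I_j|_h$, on the added factor $\mathbb{R}^{\tilde{N}-N}$.

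To specify the polynomials $b_{kj}(y,z)$ I would work in second-kind canonical coordinates on $G$ built from $\{\mathcal{X}_{I_j}^{\ast}\}$. In those coordinates the Campbell--Hausdorff formula expresses each $\mathcal{X}_k^{\ast}$ as a polynomial vector field whose coefficients are homogeneous of weight $|I_j|_h-d_k$ and involve only variables of weight strictly less than $|I_j|_h$ in the $z$-direction. Transporting the first $N$ coordinates through $\Phi^u$ onto $\mathbb{M}$, and retaining the polynomial expressions of those same Campbell--Hausdorff coefficients as functions of the joint variable $(y,z)$ in the last $\tilde{N}-N$ slots, defines $b_{kj}(y,z)$ automatically polynomial and homogeneous of weight $|I_j|_h-d_k$, so that $\sum_{j>N}b_{kj}\partial/\partial z_j$ is homogeneous of order $-d_k$ as required. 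A crucial observation is that the same $b_{kj}$ serve for both $\tilde{X}_k$ and $\widehat{\tilde{X}}_k$: the Campbell--Hausdorff expansion depends only on the abstract free-algebra structure, which is shared between $X_k$ and its nilpotentization $\widehat{X}_k^u$.

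Two verifications then remain. First, the family $\{\tilde{X}_k\}_{k=1}^q$ defines a weighted C-C structure of depth $M$ on $\tilde{U}$: each bracket $[\tilde{X}_{k_1},\tilde{X}_{k_2}]$ projects to $[X_{k_1},X_{k_2}]$ in the $y$-component and, by construction, to the corresponding free bracket in the $z$-component, so the filtration property \eqref{filtr_commut} is inherited from the original fields and from $\mathcal{N}$. Second, the lifted system is free up to order $M$ at $(u,0)$, hence on a whole neighborhood: for $j\leq N$ the bracket $\tilde{X}_{I_j}(u,0)$ reduces in its $y$-component to $Y_j(u)$, while for $j>N$ it contains $\partial/\partial z_j$ with nonzero coefficient plus a triangular correction in coordinates $z_l$ with $l>j$ that vanishes at $(u,0)$; this upper-triangular structure yields $\tilde{N}$ linearly independent vectors, which is the maximal rank. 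Linear independence persists on an open neighborhood $\tilde{V}$ of $(u,0)$ by continuity, and Remark~\ref{svob_reg} then gives regularity throughout $\tilde{V}$. The main obstacle is the algebraic bookkeeping that produces the $b_{kj}$ as honest polynomials of the correct homogeneous weight; this rests essentially on the \emph{freeness} of $\mathcal{N}$, which prevents any unexpected commutator relations from forcing nonpolynomial corrections, and on the fact that the Campbell--Hausdorff series in the lifted coordinates terminates cleanly at weight $M$.
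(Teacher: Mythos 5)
Your proposal follows essentially the same route as the paper: realize the free nilpotent weighted algebra ${\cal N}^M_{d_1,\ldots,d_q}$ by homogeneous left-invariant fields on $G\cong\mathbb R^{\tilde N}$, extract the polynomial coefficients $b_{kj}$ from the second-kind coordinate expression of these fields via Campbell--Hausdorff, attach the same $b_{kj}$ to both $X_k$ and $\widehat X_k^u$, and conclude freeness at $(u,0)$ and hence regularity on a neighborhood via Remark~\ref{svob_reg}. The one place where the paper is more careful than your sketch is the splitting of coordinates: the $z$-directions are not the raw leftover Hall-basis commutators but a homogeneous basis $\widehat Z_{N+1},\ldots,\widehat Z_{\tilde N}$ of the isotropy subalgebra ${\cal H}=\{\sum_j c_j\widehat{\tilde{X'}}_{I_j}\mid\sum_j c_j\widehat X^u_{I_j}(u)=0\}$ of the $G$-action on $U$ induced by the surjection $\Psi:{\cal N}\to\operatorname{Lie}\{\widehat X^u_1,\ldots,\widehat X^u_q\}$; it is the resulting identification $G/H\cong U$ that makes the $y$-component of the lifted fields reduce cleanly to $\widehat X^h_k(y)$ (and then to $X_k(y)$) with no cross terms, so your ``algebraic bookkeeping'' should be organized around ${\cal H}$ rather than around a subset of the Hall basis.
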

\begin{proof}


 Consider canonical vector fields $\{\widehat{\tilde{X^{\prime}}}_I\}_{|I|_h\leq M}\in C^{\infty}$ on $\mathbb R^{\tilde{N}}$ which generate the Lie algebra ${\cal N}$ defined in \eqref{al_svob} in such way that ${\cal
F}_l^q=\text{span}\{\widehat{\tilde{X}}_I^{\prime}\}_{|I|_h\leq l}$,
$\widehat{\tilde{X^{\prime}}}_{I_j}(0)=e_j$,
$j=1,\ldots,\tilde{N}$ \cite{pos,fs,lan,vk}.

By definition of a free algebra, there is a surjective homomorphism of nilpotent Lie algebras $\Psi:{\cal N}\to
\text{Lie}\{\widehat{X}^u_1,\widehat{X}^u_2,\ldots,\widehat{X}^u_q\}$
such that
$\Psi(\widehat{\tilde{X^{\prime}}}_I)=\widehat{X}^{u}_I$,
$|I|_h\leq M$.

Let $G=\operatorname{exp}({\cal N})(0)$ be the corresponding Lie group $\mathbb R^{\tilde{N}}$. Define the action of $G$ on $\mathbb M$ by means of the homomorphism
$\Psi$: for
$g=\text{exp}\left(\sum\limits_{j=1}^{\tilde{N}}c_j
\widehat{\tilde{X^{\prime}}}_{I_j}\right)(0)\in G,\ v\in U$
let
$$g(v)=\text{exp}(\sum\limits_{j=1}^{\tilde{N}}c_j
\Psi(\widehat{\tilde{X^{\prime}}}))(v)=
\text{exp}(\sum\limits_{j=1}^{\tilde{N}}c_j
\widehat{X}_{I_j}^u)(v).$$

The isotropy subgroup $H=\{g\in G\mid g(u)=u\}\subseteq G$ is connected and invariant under dilations
\begin{equation}\label{dil_lift}
\tilde{\delta}_{\varepsilon}(x_1,x_2,\ldots,x_{\tilde{N}})=
(\varepsilon^{|I_1|_h}x_1, \varepsilon^{|I_2|_h}x_2,\ldots,
\varepsilon^{|I_{\tilde{N}}|_h}x_N),\end{equation} due to homogeneity of the vector fields.
Moreover,
 \begin{equation}\label{H_alg}{\cal
H}=\operatorname{span}\left\{\sum\limits_j
c_j\widehat{\tilde{X^{\prime}}}_{I_j}\mid \sum\limits_j
c_j\widehat{X}^u_{I_j}(u)=0\right\}.\end{equation}

Denote
 by $\widehat{Z}_{N+1},\ldots,\widehat{Z}_{\tilde{N}}$
the basis of the subalgebra ${\cal H}$ consisting of vector fields homogeneous under dilations.

The mapping
$\varphi_u:G\to U\subseteq \mathbb M$ defined as
$\varphi_u(g)=g(u)$ induces a diffeomorfism from the homogeneous space
$G/H=\{Hg\mid g\in G\}$ onto the neighborhood $U$:
$\varphi_u(Hg)=g(u).$ Consider on  $G/H$ left-invariant vector fields
$$\widehat{X}_i^h(Hg)=\frac{d}{dt}[Hg\operatorname{exp}
(t\widehat{\tilde{X^{\prime}}}_i)(0)]\Big\vert_{t=0}, i
=1,\ldots,q.$$ By the diffeomorphism $\varphi_u$ identify them with the vector fields $\widehat{X}_i^u$:
$$(\varphi_u)_*\langle\widehat{X}_i^h\rangle(Hg)=\frac{d}{dt}
[\varphi(
Hg\operatorname{exp}(t\widehat{\tilde{X^{\prime}}}_i)(0))]
\Big\vert_{t=0}=$$
$$=\frac{d}{dt}[\operatorname{exp}(t\widehat{X}^u_i)(g(u))]\Big\vert_{t=0}
= \widehat{X}^u_i(g(u)).$$

Consider on $U$ the basis
\begin{equation}\label{bas_hat}
\widehat{Y}^u_1,\widehat{Y}^u_2,\ldots,\widehat{Y}^u_N,\end{equation}
consisting of the same commutators of the vector fields
$\{\widehat{X}_I^u\}_{|I|_h\leq M}$, as the basis \eqref{bas} of the commutators of  $\{X_I\}_{|I|_h\leq M}$

Taking in account \eqref{H_alg}, we see that the family of vector
fields $\{\widehat{Y}_i\}_{i=1}=\{(\varphi_u)^{-1}_*
\langle\widehat{Y}^u_i\rangle\}_{i=1}^N$ is a basis of the
algebraic complement to $\mathcal{H}$ in the Lie subalgebra
$N_{M,m}$, consisting of homogeneous vector fields.

Introduce on  $G$ coordinates
\begin{equation}\label{odnor_coord}(y,z)\in\mathbb R^{\tilde{N}}\mapsto g=
\operatorname{exp}\left(\sum\limits_
{k=N+1}^{\tilde{N}}z_k\widehat{Z}_k\right)\operatorname{exp}
(y_{N}\widehat{Y}_{N})\ldots\operatorname{exp}
(y_{1}\widehat{Y}_{1})\end{equation}

In these coordinates it holds
\begin{equation}\label{decomp_group}
\widehat{\tilde{X^{\prime}}}_k(y,z)=\widehat{X}_k^h(y)+\sum
\limits_{j=N+1}^{\tilde{N}} b_{kj}(y,z)\frac{\partial} {\partial
z_j},\ j=1,2,\ldots,q.
\end{equation}
Indeed,
$$\widehat{\tilde{X^{\prime}}}_i(g)=\frac{d}{dt}[g\operatorname{exp}
(t\widehat{\tilde{X^{\prime}}}_i)](0) \mid_{t=0};$$ in coordinates
\eqref{odnor_coord} we have
$$g\operatorname{exp}(t\tilde{X}^{\prime}_i)(0)=
\operatorname{exp}\left(\sum\limits_
{k=N+1}^{\tilde{N}}z_k\widehat{Z}_k\right)\operatorname{exp}
(y_{N}\widehat{Y}_{N})\ldots\operatorname{exp}
(y_{1}\widehat{Y}_{1})
\operatorname{exp}(t\widehat{\tilde{X^{\prime}}}_i)(0)=$$
$$=
\operatorname{exp}\left(\sum\limits_
{k=N+1}^{\tilde{N}}z_k\widehat{Z}_k\right) h(t)\operatorname{exp}
(c_N(y,t)\widehat{Y}_{N})\ldots\operatorname{exp}
(c_1(y,t)\widehat{Y}_{1}),$$ where  $h(t)\in H$;
$$\hspace{-40.pt}Hg\operatorname{exp}(t\widehat{\tilde{X^{\prime}}}_i)(0)=
H\operatorname{exp}\left(\sum\limits_
{k=N+1}^{\tilde{N}}z_k\widehat{Z}_k\right) \operatorname{exp}
(y_{N}\widehat{Y}_{N})\ldots\operatorname{exp}
(y_{1}\widehat{Y}_{1})
\operatorname{exp}(t\widehat{\tilde{X^{\prime}}}_i)(0)=$$
$$=
H\operatorname{exp}
(c_{N}(y,t)\widehat{Y}_{N})\ldots\operatorname{exp}
(c_{1}(y,t)\widehat{Y}_{1}).$$ Thus the coordinates of the vector
fields $\widehat{X}^h_i$ and $\widehat{\tilde{X^{\prime}}}_i$ by
$\frac{\partial}{\partial y_k}$ coincide and are equal to
$\frac{d}{dt}c_k(y,0)$. Hence, we have \eqref{decomp_group}.

Now define the vector fields $\tilde{X}_k$,
$\widehat{\tilde{X}}_k$ by formulas \eqref{lift_vf}. Since the
vector fields $\widehat{\tilde{X^{\prime}}}_k$ are homogeneous of
order $-d_k$, then the vector fields
$\sum\limits_{j=N+1}^{\tilde{N}} b_{kj}(y,z)\frac {\partial}
{\partial z_j},\ k=1,2,\ldots,q$ are homogeneous of the same
order. By construction, we have that
$\widehat{\tilde{X}}_k=\tilde{X}_k^{(-d_k)}$ w.r.t. the dilations
\eqref{dil_lift}. Thus the vector fields $\{\tilde{X}_k\}_{k=1}^q$
define a C-C structure of depth  $M$ on $\tilde{\mathbb M}$ and
are free of order $M$ on $U$. The point  $\tilde{u}$ and hence all
points in some of its neighborhoods are regular, according to
Remark \ref{svob_reg}.
\end{proof}

\begin{Proposition}\label{lift_I}
For all multiindices $I$, such that $|I|_h\leq M$, the following decompositions hold:
\begin{equation}\label{lift_vf_I}
\tilde{X}_I(y,z)=X_I(y)+\sum\limits_{j=N+1}^{\tilde{N}}b_{Ij}(y,z)\frac
{\partial} {\partial z_j}\text{ and }
\widehat{\tilde{X}}_I(y,z)=\widehat{X}_I^u(y)+
\sum\limits_{j=N+1}^{\tilde{N}}
\widehat{b}_{Ij}(y,z)\frac{\partial} {\partial z_j},
\end{equation}
where $b_{Ij}(y,z),\widehat{b}_{Ij}(y,z)\in C^{M+1}(\tilde{U})$.
\end{Proposition}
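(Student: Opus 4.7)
The plan is to proceed by induction on the commutator length $k$ of the multi-index $I=(i_1,\ldots,i_k)$; the base case $k=1$ is precisely formula \eqref{lift_vf} from Proposition \ref{lift}. The structural observation driving the induction is the ``triangular'' form of the lift: the coefficients of $X_{i_1},\ldots,X_q$ depend only on $y$, while the difference $W_k := \tilde X_k - X_k$ points purely in the $\partial_{z_j}$-directions and has coefficients depending on $(y,z)$; this shape is preserved by the bracket operation.

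For the inductive step, write $X_I=[X_{i_1},X_J]$ with $|J|=k-1$. By the inductive hypothesis,
$$\tilde X_{i_1}(y,z)=X_{i_1}(y)+W_{i_1}(y,z),\qquad \tilde X_J(y,z)=X_J(y)+W_J(y,z),$$
where $W_{i_1}=\sum_{j=N+1}^{\tilde N} b_{i_1,j}(y,z)\,\partial_{z_j}$ and similarly for $W_J$. Bilinearity of the Lie bracket yields
$$\tilde X_I=[X_{i_1},X_J]+[X_{i_1},W_J]+[W_{i_1},X_J]+[W_{i_1},W_J].$$
The first term equals $X_I(y)$ and carries only $\partial_{y_l}$-components. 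In the three remaining commutators, the ``return actions'' vanish, $W_J X_{i_1}=0=W_{i_1}X_J$, because $\partial_{z_j}$ annihilates the $y$-dependent coefficients of $X_{i_1}$ and $X_J$; the ``forward actions'' $X_{i_1}(W_J)$ and $X_J(W_{i_1})$ differentiate the coefficients $b_{*,j}$ in the $y$-variables and therefore land in the span of $\{\partial_{z_j}\}_{j>N}$; and $[W_{i_1},W_J]$ is a bracket of two pure $\partial_z$-vector fields, hence itself of the same form. Collecting the three $\partial_z$-contributions defines $b_{I,j}(y,z)$ and yields \eqref{lift_vf_I}. The same computation, with the $\widehat{X}_k^u$ replacing the $X_k$, provides the decomposition of $\widehat{\tilde X}_I$.

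The smoothness claim is then a matter of bookkeeping. The generators $X_k$ are $C^{2M+1}$ by Definition \ref{cc-space} and the seed coefficients $b_{k,j}$ are polynomial in $(y,z)$ by the construction of Proposition \ref{lift}; each bracket operation costs at most one derivative of the coefficients, and at most $|I|_h-1\le M-1$ brackets are used, hence $b_{I,j}\in C^{M+2}\subset C^{M+1}$. For $\widehat{b}_{I,j}$ one uses that the $\widehat{X}_k^u$ are $C^{M+1}$ by Corollary \ref{alg_prop} and that in the canonical coordinates \eqref{loc_coord} both the $\widehat{X}_k^u$ and the lifted $\widehat{\tilde X}_k$ are polynomial and homogeneous with respect to the dilations \eqref{dil_lift}; this actually makes each $\widehat{b}_{I,j}$ polynomial in $(y,z)$ in those coordinates, which is stronger than $C^{M+1}$.

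The only point that requires care is verifying the ``triangular'' structure above, namely that no spurious $\partial_{y_l}$-term is produced on the right-hand side of a bracket beyond the one coming from $[X_{i_1},X_J]=X_I$; once this is in hand the induction is routine and the smoothness bookkeeping is elementary, drawing on the relatively generous $C^{2M+1}$-hypothesis built into Definition \ref{cc-space}.
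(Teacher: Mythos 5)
Your proof is correct and follows essentially the same route as the paper: induction on the length of $I$, writing $\tilde X_I=[\tilde X_{i_1},\tilde X_J]$, expanding the bracket bilinearly, and observing that the cross terms and the $\partial_z$--$\partial_z$ bracket all stay in $\operatorname{span}\{\partial_{z_j}\}_{j>N}$ because the coefficients of $X_{i_1}$ and $X_J$ depend only on $y$. The paper's argument is the same computation (stated slightly more tersely, with the smoothness claim dispatched by appeal to Definition \ref{cc-space}), so there is nothing to add.
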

\begin{proof}
Let us prove the first decomposition of \eqref{lift_vf_I} by
induction on the length of $I$ (the second decomposition is proved
in a similar way). Let \eqref{lift_vf_I} be true for all $J$, such
that  $|J|_h\leq l$. By the Jacobi identity, any vector field
$\tilde{X}_I$, where $|I|_h\leq l+\min\{d_1,\ldots,d_q\}$, can be
represented as $\tilde{X}_I=[\tilde{X}_i,\tilde{X}_J]$, where
$i\in{1,\ldots,q}$ and $|J|_h\leq l$. By induction and taking into
account the identity \eqref{lift_vf}, we get
$$\tilde{X}_I(y,x)=[X_i,X_J](y)+[X_i,
\sum\limits_{j=N+1}^{\tilde{N}} b_{Jj}(y,z)\frac{\partial}
{\partial z_j}]+$$$$+[\sum\limits_{j=N+1}^{\tilde{N}}
b_{ij}(y,z)\frac{\partial} {\partial
z_j},X_J]+[\sum\limits_{j=N+1}^{\tilde{N}}
b_{ij}(y,z)\frac{\partial} {\partial
z_j},\sum\limits_{j=N+1}^{\tilde{N}} b_{Jj}(y,z)\frac{\partial}
{\partial z_j}]$$
$$=X_I(y)+\sum\limits_{j=N+1}^{\tilde{N}} \left(X_ib_{Jj}-X_Jb_{ij}+
\frac{\partial} {\partial z_j}b_{Ji}-\frac{\partial} {\partial
z_j}b_{ij}\right)\frac{\partial} {\partial z_j}.$$ Thus the vector field
 $\tilde{X}_I$ has the desired form. The rest of the proposition
 follows from the smoothness assumptions of Definition  \ref{cc-space}.
\end{proof}

Consider the neighborhood $\tilde{U}$ and the vector fields
$\tilde{X}_I$ from Propositions \ref{lift}, \ref{lift_I}. Let
$\pi:\tilde{U}\to U$ be a {\it canonical projection} acting on an
arbitrary point $\tilde{v}=(v,y)$, such that $v\in U$,
$y\in\mathbb R^{\tilde{N}-N}$, as $\pi(\tilde{v})=v$.
The next proposition states that the projection is distance-decreasing (cf. \cite{bel,jean})
\begin{Proposition}\label{ineq_prop} For any $v,w\in U$ and
$p,q\in\mathbb R^{\tilde{N}-N}$ the following inequalities hold:
\begin{equation}\label{ineq_rho}\rho(v,w)\leq \tilde{\rho}((v,p),(w,q)),\end{equation}
 \begin{equation}\label{ineq_u_rho}\rho^u(v,w)\leq
\tilde{\rho}^{\tilde{u}}((v,p),(w,q)),\end{equation} where the
quasimetrics $\tilde{\rho}$, $\tilde{\rho}^{\tilde{u}}$ on the
regular C-C space  $\tilde{U}$ are defined in a similar  way as $\rho,\rho^u$ on
the initial neighborhood $U\subseteq\mathbb M$.
\end{Proposition}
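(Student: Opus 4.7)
The plan is to exploit directly the structure of the lifted vector fields given by Proposition \ref{lift_I}, which shows that the "vertical" part of $\tilde{X}_I$ (resp.\ $\widehat{\tilde{X}}_I$) involves only $\partial/\partial z_j$ for $j>N$. Consequently, the canonical projection $\pi:(y,z)\mapsto y$ satisfies $\pi_*\tilde{X}_I = X_I$ and $\pi_*\widehat{\tilde{X}}_I = \widehat{X}_I^u$ for every $|I|_h\leq M$. Both inequalities then reduce to a standard "push down admissible curves" argument.

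To prove \eqref{ineq_rho}, I would fix an arbitrary $\delta>\tilde{\rho}((v,p),(w,q))$ and invoke the definition of $\tilde{\rho}$ to obtain a curve $\tilde{\gamma}:[0,1]\to\tilde{U}$ with $\tilde{\gamma}(0)=(v,p)$, $\tilde{\gamma}(1)=(w,q)$, satisfying
$$\dot{\tilde{\gamma}}(t)=\sum_{|I|_h\leq M}w_I\,\tilde{X}_I(\tilde{\gamma}(t)),\qquad |w_I|<\delta^{|I|_h}.$$
Set $\gamma(t):=\pi(\tilde{\gamma}(t))$. Then $\gamma(0)=v$, $\gamma(1)=w$, and since $\pi_*\tilde{X}_I=X_I$ by \eqref{lift_vf_I}, we have
$$\dot{\gamma}(t)=\pi_*\dot{\tilde{\gamma}}(t)=\sum_{|I|_h\leq M}w_I\,X_I(\gamma(t)),$$
with the same coefficients $w_I$. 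Hence $\gamma$ is an admissible curve in \eqref{rho} for the parameter $\delta$, giving $\rho(v,w)<\delta$. Taking the infimum over such $\delta$ yields \eqref{ineq_rho}.

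The inequality \eqref{ineq_u_rho} is proved by exactly the same procedure, applied to the nilpotent-approximation versions: given $\delta>\tilde{\rho}^{\tilde{u}}((v,p),(w,q))$, one picks an admissible curve $\tilde{\gamma}$ for $\tilde{\rho}^{\tilde{u}}$ (driven by the $\widehat{\tilde{X}}_I$), and the projected curve $\pi\circ\tilde{\gamma}$ is admissible for $\rho^u$ with the identical coefficients, using the second decomposition of \eqref{lift_vf_I} together with $\pi_*\widehat{\tilde{X}}_I=\widehat{X}_I^u$.

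I do not expect a real obstacle: the only points that need a brief check are (i) that $\pi(\tilde{U})\subseteq U$, which is built into the construction of $\tilde{U}=U\times(\text{nbhd of }0)$ in Proposition \ref{lift}, so the projected curve automatically lies in $U$; and (ii) that the same coefficients $w_I$ certify admissibility on both sides — this is immediate because the bounds $|w_I|<\delta^{|I|_h}$ are on the scalar weights alone and are preserved under projection. The real content is entirely in the preparatory Propositions \ref{lift} and \ref{lift_I}, which arrange the lift precisely so that the vertical correction terms live in $\ker \pi_*$.
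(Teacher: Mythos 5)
Your proposal is correct and follows essentially the same route as the paper: both arguments push an admissible curve for $\tilde{\rho}$ (resp.\ $\tilde{\rho}^{\tilde{u}}$) down through the canonical projection $\pi$, using the decomposition \eqref{lift_vf_I} to see that the vertical correction terms lie in $\ker\pi_*$, so the projected curve is admissible for $\rho$ (resp.\ $\rho^u$) with the same coefficients $w_I$. Your explicit handling of the infimum via an arbitrary $\delta>\tilde{\rho}((v,p),(w,q))$ is in fact slightly more careful than the paper's phrasing, but the substance is identical.
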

\begin{proof} Show the inequality \eqref{ineq_rho}.
Denote $\tilde{v}=(v,p)$, $\tilde{w}=(w,q)$. There is a unique
curve $\tilde{\gamma}(t)$ such that
$$\begin{cases}\dot{\tilde{\gamma}}(t)=\sum\limits_{|I|_h\leq M}w_I
\tilde{X}_I(\tilde{\gamma}(t))=\sum\limits_{|I|_h\leq M}w_I
(X_I(\pi(\tilde{\gamma}(t)))+\sum\limits_{k=N+1}^{\tilde{N}}b_{Ik}
(\tilde{\gamma}(t))\frac{\partial}{\partial z_k}),\\
\tilde{\gamma}(0)=\tilde{v}, \tilde{\gamma}(1)=\tilde{w}.
\end{cases}$$

By definition,
$\tilde{\rho}(\tilde{v},\tilde{w})=\underset{|I|_h\leq
M}\max\{|w_I|^{1/|I|_h}\}$. Let
$\gamma(t)=\pi(\tilde{\gamma}(t))$, then
$$\begin{cases}\dot{\gamma}(t)=\sum\limits_{|I|_h\leq M}w_I
X_I(\gamma(t)),\\
\gamma(0)=v, \gamma(1)=w.
\end{cases}$$

Thus, the curve $\gamma(t)$  lies in $U$ and joins the points $v$
and $w$, from where \eqref{ineq_rho} follows:
$\rho(v,w)\leq\underset{|I|_h\leq
M}\max\{|w_I|^{1/|I|_h}\}=\tilde{\rho}(\tilde{v},\tilde{w})$. The
inequality \eqref{ineq_u_rho} is proved in the same way.
\end{proof}

\begin{Proposition}[\rm Generalized triangle inequalities]
\label{treug_ineq} For any point $g\in U$ there are constants
$Q,Q_g>0$ such that, for all $u,v,w\in U$, we have
\begin{equation}\label{treug_rho}\rho(v,w)\leq
Q(\rho(u,v)+\rho(u,w)),\end{equation}
\begin{equation}\label{treug_rhou}
\rho^g(v,w)\leq Q_g(\rho^g(u,v)+\rho^g(u,w)).\end{equation}
\end{Proposition}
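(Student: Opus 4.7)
The plan is to use the lifting construction of Proposition~\ref{lift} to reduce both inequalities to the regular (free) C-C case on $\tilde U \subseteq \tilde{\mathbb M}$, where the generalized triangle inequality for the Nagel--Stein--Wainger quasimetric is already available from \cite{karm, vk, vk2}. A naive ``concatenate the two optimal curves'' argument does not immediately work, because the admissibility class of curves in the definition of $\rho$ constrains the coefficients $w_I$ in a single rigid way, and a concatenation produces a curve that may not satisfy the bound $|w_I|<\delta^{|I|_h}$ for any single admissible $\delta$. Passing to the lift circumvents this obstacle.

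For the first inequality, fix $u,v,w\in U$ and pick $\delta_1>\rho(u,v)$, $\delta_2>\rho(u,w)$. By definition there exist admissible curves $\gamma_1,\gamma_2:[0,1]\to U$ from $u$ to $v$ and $u$ to $w$ with coefficients $w_I^j$ satisfying $|w_I^j|<\delta_j^{|I|_h}$. I would lift each $\gamma_j$ to a curve $\tilde\gamma_j:[0,1]\to\tilde U$ starting at $\tilde u=(u,0)$ by solving $\dot{\tilde\gamma}_j=\sum_{|I|_h\le M} w_I^j \tilde X_I(\tilde\gamma_j)$. The key structural identity $\tilde X_I=X_I+\sum_{j>N}b_{Ij}(y,z)\partial_{z_j}$ from \eqref{lift_vf_I} guarantees that the projection $\pi\circ\tilde\gamma_j$ solves the same ODE as $\gamma_j$ and hence equals $\gamma_j$. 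Consequently $\tilde\gamma_j$ is an admissible curve for $\tilde\rho$ with the same parameter $\delta_j$, so $\tilde\rho(\tilde u,\tilde\gamma_j(1))\le\delta_j$.

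Since $\tilde U$ is a free (hence regular) C-C space, the generalized triangle inequality for $\tilde\rho$ is known with some constant $\tilde Q$ depending only on the algebraic data $(d_1,\ldots,d_q,M)$. Applying it to the points $\tilde\gamma_1(1),\tilde\gamma_2(1)$ with base $\tilde u$ gives $\tilde\rho(\tilde\gamma_1(1),\tilde\gamma_2(1))\le\tilde Q(\delta_1+\delta_2)$. The distance-decreasing bound \eqref{ineq_rho} of Proposition~\ref{ineq_prop} yields $\rho(v,w)\le\tilde\rho(\tilde\gamma_1(1),\tilde\gamma_2(1))\le\tilde Q(\delta_1+\delta_2)$. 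Passing to the infimum $\delta_j\searrow\rho(u,\cdot)$ gives \eqref{treug_rho} with $Q=\tilde Q$.

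For the second inequality I would apply the same argument verbatim with $\widehat X_I^g$ in place of $X_I$: Proposition~\ref{lift} supplies a lifting $\widehat{\tilde X}_I$ of the nilpotent approximations to regular/free vector fields on $\tilde U$, and \eqref{ineq_u_rho} provides the matching distance-decreasing bound, so the regular triangle inequality for $\tilde\rho^{\tilde g}$ descends to $\rho^g$ with a constant $Q_g$ (depending on $g$ only because the lift is constructed around the nilpotent data at $g$). The main obstacle is already resolved by the preparatory work in Propositions~\ref{lift}, \ref{lift_I} and \ref{ineq_prop}: once lifting preserves admissibility of curves and $\pi$ is distance-decreasing, the reduction to the regular case is mechanical, and the cited triangle inequality for $\tilde\rho$ closes the argument.
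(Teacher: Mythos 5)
Your proposal is correct and follows essentially the same route as the paper: take near-optimal admissible (exponential) curves from $u$ to $v$ and to $w$, lift them to $\tilde U$ starting at $\tilde u=(u,0)$ so that $\tilde\rho(\tilde u,\cdot)$ is controlled by the same coefficients, invoke the known generalized triangle inequality on the regular lifted space, and descend via the distance-decreasing projection of Proposition~\ref{ineq_prop}. The only cosmetic difference is that the paper phrases the near-optimal curves directly as $v=\exp\bigl(\sum_I a_I X_I\bigr)(u)$ with a $\zeta$-slack rather than via $\delta_j>\rho(u,\cdot)$, which is the same thing.
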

\begin{proof}
For any (arbitrarily small) $\zeta>0$ consider
$$\{a_I\}_{|I|_h\leq M}\text{ and }\{b_I\}_{|I|_h\leq M},$$ such that
$$v=\text{exp}(\sum\limits_{|I|_h\leq M}a_IX_I)(u),\
w=\text{exp}(\sum\limits_{|I|_h\leq M}b_IX_I)(u)$$ and
$$\underset{|I|_h\leq
M}\max\{|a_I|^{1/|I|_h}\}\leq\rho(u,v)+\zeta,\ \underset{|I|_h\leq
M}\max\{|b_I|^{1/|I|_h}\}\leq\rho(u,w)+\zeta.$$ Let
$\tilde{u}=(u,0)$ and consider on $\tilde{U}$ points
$$\tilde{v}=\text{exp}(\sum\limits_{|I|_h\leq
M}a_I\tilde{X}_I)(\tilde{u})\text{ and }
\tilde{w}=\text{exp}(\sum\limits_{|I|_h\leq
M}b_I\tilde{X}_I)(\tilde{u}).$$ Then we have $v=\pi(\tilde{v}),
w=\pi(\tilde{w})$ and
$$\tilde{\rho}(\tilde{u},\tilde{v})=\underset{|I|_h\leq
M}\max\{|a_I|^{1/|I|_h}\},\
\tilde{\rho}(\tilde{u},\tilde{w})=\underset{|I|_h\leq
M}\max\{|b_I|^{1/|I|_h}\}.$$ According to Proposition \ref{ineq_prop} and the
generalized triangle inequality for $\tilde{\rho}$ (in the
neighborhood of a regular point \cite{vk}) we have
$$\rho(v,w)\leq \tilde{\rho}(\tilde{v},\tilde{w})\leq
Q(\tilde{\rho}(\tilde{u},\tilde{v})+\tilde{\rho}(\tilde{u},\tilde{w}))\leq
Q(\rho(u,v)+\rho(u,w)+2\zeta),$$ from where \eqref{treug_rho}
follows; \eqref{treug_rhou} is proved in a similar way.
\end{proof}

\begin{Proposition}[\rm ``Rolling-of-the-box'' lemma]\label{prokat}
For all points $u,v\in U$ and $r,\xi>0$, for which both parts of the
following inclusions make sense (i.e. lie in  $U$), we have
\begin{equation}\label{prokat_incl_u}
\underset{x\in B^{\rho^u}(v,r)}\bigcup B^{\rho^u}(x,\xi)\subseteq
B^{\rho^u}(v,r+C\xi),\end{equation}
\begin{equation}\label{prokat_incl} \underset{x\in
B^{\rho}(v,r)}\bigcup B^{\rho}(x,\xi)\subseteq
B^{\rho}(v,r+C\xi+O(r^{1+\frac{1}{M}})+O(\xi^{1+\frac{1}{M}})).
\end{equation}
\end{Proposition}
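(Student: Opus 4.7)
The two inclusions are handled separately. Inclusion (\ref{prokat_incl_u}) is a purely algebraic statement about the nilpotent approximation and should follow directly from the Campbell--Hausdorff formula in the graded nilpotent Lie algebra $L$ from Corollary~\ref{alg_prop}. Inclusion (\ref{prokat_incl}) carries the extra error $O(r^{1+1/M})+O(\xi^{1+1/M})$ and will be obtained by lifting to a regular C-C space via Proposition~\ref{lift}, applying Theorem~\ref{int_lines_perem} together with (\ref{prokat_incl_u}) on the lifted space, and projecting down via the distance-decreasing inequality (\ref{ineq_rho}) of Proposition~\ref{ineq_prop}.

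\textbf{Proof of} (\ref{prokat_incl_u})\textbf{.} For $x\in B^{\rho^u}(v,r)$ and $y\in B^{\rho^u}(x,\xi)$, choose constant coefficients $\{a_I\}_{|I|_h\le M}$ and $\{b_I\}_{|I|_h\le M}$ with $\max_I|a_I|^{1/|I|_h}<r$ and $\max_I|b_I|^{1/|I|_h}<\xi$ realizing $x=\exp\bigl(\sum_I a_I\widehat{X}^u_I\bigr)(v)$, $y=\exp\bigl(\sum_I b_I\widehat{X}^u_I\bigr)(x)$. Since $L$ is nilpotent of step $M$, the Campbell--Hausdorff series terminates and yields $y=\exp\bigl(\sum_I c_I\widehat{X}^u_I\bigr)(v)$, each $c_I$ being a polynomial in the $a_J$'s and $b_J$'s. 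Homogeneity of $\widehat{X}^u_I$ under $\Delta^u_\varepsilon$ (degree $-|I|_h$) forces every monomial of $c_I$ to have weighted degree exactly $|I|_h$, and every monomial of $c_I$ other than $a_I$ and $b_I$ themselves is an iterated bracket containing at least one $a$-factor and at least one $b$-factor (since $[A,A]=[B,B]=0$). Bounding term by term yields $|c_I|\le r^{|I|_h}+\xi^{|I|_h}+\sum_{s=1}^{|I|_h-1}K_s\,r^{|I|_h-s}\xi^s\le (r+C\xi)^{|I|_h}$, for a constant $C$ chosen uniformly in $I$ so that $\binom{|I|_h}{s}C^s\ge K_s$ for all admissible $s$. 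Taking $|I|_h$-th roots and maximizing over $I$ gives $\rho^u(v,y)\le r+C\xi$.

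\textbf{Proof of} (\ref{prokat_incl})\textbf{.} Given $x\in B^\rho(v,r)$ and $y\in B^\rho(x,\xi)$, analogously choose $\{a_I\},\{b_I\}$ realizing $x=\exp\bigl(\sum a_I X_I\bigr)(v)$ and $y=\exp\bigl(\sum b_I X_I\bigr)(x)$ with the same bounds. By Proposition~\ref{lift} embed $U$ into a regular C-C space $\tilde U$; set $\tilde v=(v,0)$, $\tilde x=\exp\bigl(\sum a_I\tilde X_I\bigr)(\tilde v)$, $\tilde y=\exp\bigl(\sum b_I\tilde X_I\bigr)(\tilde x)$, so $\pi(\tilde y)=y$ and $\tilde\rho(\tilde v,\tilde x)<r$, $\tilde\rho(\tilde x,\tilde y)<\xi$. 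Concatenate the two segments into a piecewise-constant-coefficient curve from $\tilde v$ to $\tilde y$ and compare it, via Theorem~\ref{int_lines_perem} with base point $\tilde v$, to its nilpotent counterpart $\hat\gamma$ driven by $\widehat{\tilde X}^{\tilde v}_I$ with the same coefficients. By construction and Campbell--Hausdorff, $\hat\gamma(1)=\exp\bigl(\sum c_I\widehat{\tilde X}^{\tilde v}_I\bigr)(\tilde v)$ with $c_I$ the nilpotent combination of the $a_I$'s and $b_I$'s, so (\ref{prokat_incl_u}) applied in $\tilde U$ gives $\tilde\rho^{\tilde v}(\tilde v,\hat\gamma(1))\le r+C\xi$. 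Theorem~\ref{int_lines_perem} bounds $\tilde\rho(\tilde y,\hat\gamma(1))=O((r+\xi)^{1+1/M})$, and the local approximation Theorem~\ref{lat_dc} converts $\tilde\rho^{\tilde v}$ into $\tilde\rho$ with an error of the same order. Combining and using (\ref{ineq_rho}) yields $\rho(v,y)\le\tilde\rho(\tilde v,\tilde y)<r+C\xi+O(r^{1+1/M})+O(\xi^{1+1/M})$.

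\textbf{Main obstacle.} The delicate point is keeping the error term sharp and additive: a naive application of Theorem~\ref{int_lines_perem} to the combined curve produces $O((r+\xi)^{1+1/M})$, while the claim demands $O(r^{1+1/M})+O(\xi^{1+1/M})$, which requires treating the two segments separately and careful bookkeeping of the cross-terms. A second subtlety is the sharp additive bound $r+C\xi$ in (\ref{prokat_incl_u}): the generalized triangle inequality alone only yields the weaker $Q_u(r+\xi)$, and the improvement requires exploiting the fact that the contribution $r^{|I|_h}$ to $c_I$ comes solely from $a_I$ with coefficient exactly $1$, while all remaining Campbell--Hausdorff contributions carry at least one $\xi$-factor and so can be absorbed into the $C\xi$ term.
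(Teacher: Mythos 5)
Your proof of \eqref{prokat_incl_u} is essentially the paper's argument (exact Campbell--Hausdorff in the graded nilpotent algebra, homogeneity forcing every monomial of $c_I$ to have weighted degree $|I|_h$, every cross term carrying at least one $\xi$-factor), and it is correct. For \eqref{prokat_incl}, however, you take a genuinely different and heavier route: the paper does not invoke Theorem \ref{int_lines_perem} or any local approximation theorem here. After lifting via Proposition \ref{lift} it applies the Campbell--Hausdorff formula \eqref{kh} directly to the composition $\exp\bigl(\sum_I z_I\tilde X_I\bigr)\circ\exp\bigl(\sum_I x_I\tilde X_I\bigr)$ of the actual (non-nilpotentized, finitely smooth) lifted fields, obtaining $v_I=x_I+z_I+\sum F^I_{\alpha,\beta}x^{\alpha}z^{\beta}+O(\|x\|^{M+1})+O(\|z\|^{M+1})$ with $|\alpha+\beta|_h\geq|I|_h$; the terms with $|\alpha+\beta|_h=|I|_h$ give the sharp $(r+C\xi)^{|I|_h}$ exactly as in your first part, while the terms with $|\alpha+\beta|_h>|I|_h$ and the smoothness remainder produce the $O(r^{1+\frac{1}{M}})+O(\xi^{1+\frac{1}{M}})$ correction directly; projection via \eqref{ineq_rho} finishes. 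Your detour through the nilpotentization of the lifted space, the divergence theorem and a local approximation theorem can be made to work, but two points need attention. First, you need the local approximation theorem for the quasimetric $\tilde\rho$ on a regular space, which is in the cited literature but is not the version stated as Theorem \ref{lat_dc} (that one concerns $d_c$). Second, and more importantly, your final ``combining'' step is precisely where a naive generalized triangle inequality would place a multiplicative constant $Q\geq 1$ in front of $r$ and destroy the estimate; to assemble $\tilde\rho^{\tilde v}(\tilde v,\hat\gamma(1))\leq r+C\xi$ with $\tilde\rho^{\tilde v}(\hat\gamma(1),\tilde y)=O((r+\xi)^{1+\frac{1}{M}})$ you must use \eqref{prokat_incl_u} for the lifted nilpotentization rather than the triangle inequality, and only then convert $\tilde\rho^{\tilde v}$ to $\tilde\rho$. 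You flag the right phenomenon in your ``main obstacle'' paragraph but locate it in the wrong place: $O((r+\xi)^{1+\frac{1}{M}})$ and $O(r^{1+\frac{1}{M}})+O(\xi^{1+\frac{1}{M}})$ are comparable up to constants, so that distinction is harmless; the only danger is the coefficient of the leading term $r$. The paper's direct Campbell--Hausdorff computation on the lifted fields avoids all of this bookkeeping.
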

\begin{proof}
Let us prove \eqref{prokat_incl}. Fix points $x,z$, such that $\rho(v,x)< r$, $\rho(x,z)<
\xi$, and show that $\rho(v,z)<
r+C\xi+O(r^{1+\frac{1}{M}})+O(\xi^{1+\frac{1}{M}})$. For arbitrarily
small
 $\zeta>0$ 
consider two curves $\gamma_1$, $\gamma_2$, such that
$$\begin{cases}\dot{\gamma_1}(t)=\sum\limits_{|I|_h\leq M}x_I
X_I(\gamma_1(t)),\\
\gamma_1(0)=v, \gamma_1(1)=x,
\end{cases}\quad\quad
\begin{cases}\dot{\gamma_2}(t)=\sum\limits_{|I|_h\leq M}z_I
X_I(\gamma_2(t)),\\
\gamma_2(0)=x, \gamma_2(1)=z,
\end{cases}
$$
and $$\underset{|I|_h\leq
M}\max\{|x_I|^{1/|I|_h}\}\leq\rho(v,x)+\zeta,\ \underset{|I|_h\leq
M}\max\{|z_I|^{1/|I|_h}\}\leq\rho(x,z)+\zeta.$$

Consider a point $\tilde{v}=(v,0)\in U$ and a curve
$\tilde{\gamma}_1$ such that
$$\begin{cases}\dot{\tilde{\gamma}}_1(t)=\sum\limits_{|I|_h\leq M}x_I
\tilde{X}_I(\tilde{\gamma}_1(t)),\\
\tilde{\gamma}_1(0)=\tilde{v}.\end{cases}$$ Since
$\gamma_1(t)=\pi(\tilde{\gamma}_1(t))$, we have
$\tilde{\gamma}_1(1)=(x,p)=:\tilde{x}\in\tilde{U}$, where $p\in\mathbb
R^{\tilde{N}-N}$. However,
$$\tilde{\rho}(\tilde{v},\tilde{x})=\underset{|I|_h\leq
M}\max\{|x_I|^{1/|I|_h}\}<r+\zeta.$$
In a similar way, for  a curve $\tilde{\gamma}_2$, such that
$$\begin{cases}\dot{\tilde{\gamma}}_2(t)=\sum\limits_{|I|_h\leq M}x_I
\tilde{X}_I(\tilde{\gamma}_2(t)),\\
\tilde{\gamma}_2(0)=\tilde{x}.\end{cases}$$ We have
$\gamma_2(t)=\pi(\tilde{\gamma}_2(t))$, and hence
$\tilde{\gamma}_2(1)=(z,q)=:\tilde{z}\in\tilde{U}$, where $q\in\mathbb
R^{\tilde{N}-N}$, and
$$\tilde{\rho}(\tilde{x},\tilde{z})=\underset{|I|_h\leq
M}\max\{|z_I|^{1/|I|_h}\}<\xi+\zeta.$$

According to Remark \ref{svob_reg}, all points of $\tilde{U}$ are
regular w. r. t. the C-C structure induced by the vector fields $\{\tilde{X}_I\}_{|I|_h}\leq M$.

By the Campbell-Hausdorff formula \cite{lan}, for any vector fields
$X,Y\in C^{k_0+1}$ the following decomposition is true:
\begin{equation}\label{kh}\text{exp}(sY)\circ\text{exp}(tX)(v)=
\text{exp}(sY+tX+\frac{st}{2}[X,Y]+\end{equation}
$$+\sum\limits_{2\leq k+j\leq
k_0}s^kt^jC_{kj}(X,Y)+O(s^{k_0+1})+O(t^{k_0+1})),$$ where
$C_{kj}(X,Y)$ are linear combinations of ($k+j-1$)-order commutators of $X$ and $Y$.

Applying \eqref{kh}, by simple computations, we get
$$\operatorname{exp}\left(\sum\limits_{|I|_h\leq M}z_I\tilde{X}_I
\right)\circ\operatorname{exp}\left(\sum\limits_{|I|_h\leq
M}x_I\tilde{X}_I
\right)=\operatorname{exp}\left(\sum\limits_{|I|_h\leq
M}v_I\tilde{X}_I \right)(v),$$ where
$$v_I=x_I+y_I+\sum\limits_{\substack {|\alpha+\beta|\leq M\\
|\alpha+\beta|_h\geq|I|_h}}F^I_{\alpha,\beta}x^{\alpha}z^{\beta}+
O(||x||^{M+1}) +O(||z||^{M+1}).$$

Consequently,
$$|v_I|\leq|x_I|+|z_I|+\sum\limits_{|\alpha+\beta|_h=|I|_h}
|F^I_{\alpha,\beta}|x^{\alpha}z^{\beta}+$$$$+\sum
\limits_{\substack{|\alpha+\beta|\leq M\\|\alpha+\beta|_h>|I|_h}}
|F^I_{\alpha,\beta}|x^{\alpha}z^{\beta}+O(||x||^{M+1})
+O(||z||^{M+1})\leq$$$$\leq(\tilde{r}+C_I\tilde{\xi})^{|I|_h}+O(\tilde{r}
^{|I|_h+1}) +O(\tilde{\xi}^{|I|_h+1})+O(\tilde{r}^{M+1})+
O(\tilde{\xi}^{M+1}),$$ from where it follows, that
$$\tilde{\rho}(\tilde{v},\tilde{z})=\underset{|I|_h\leq
M}\max\{|v_I|^{1/|I|_h}\}\leq
\tilde{r}+C\tilde{\xi}+O(\tilde{r}^{1+\frac{1}{M}})+
O(\tilde{\xi}^{1+\frac{1}{M}}).$$
Applying \eqref{ineq_rho}, we finally obtain
$$\rho(v,z)\leq \tilde{\rho}(\tilde{v},\tilde{z})\leq r
+C\xi+O(r^{1+\frac{1}{M}})+O(\xi^{1+\frac{1}{M}})+O(\zeta),$$ from
where \eqref{prokat_incl} follows. The inclusion \eqref{prokat_incl_u} can be proved in a similar way.
\end{proof}

\section{Main theorems on local geometry}

\begin{Proposition}
Consider on $\tilde{U}$ bases $\{\tilde{X}_I\}_{|I|_h\leq M}\text{
and }
 \{\widehat{\tilde{X}}_{I}\}_{|I|_h\leq M},$ consisting of commutators
 of the vector fields defined in
 \eqref{lift_vf}.

 Then, in coordinates $x=(y,z)$ defined in \eqref{odnor_coord},
 for all $x\in\tilde{U}$, such that $|x_j|\leq
 \varepsilon^{|I_j|}$, the following decompositions hold:
  \begin{equation}\label{dec1}
 \tilde{X}_I(x)=\sum\limits_{i=1}^{\tilde{N}}a_{I,J}(x)
 \widehat{\tilde{X}}_J(x),
 \end{equation}
 where
 $$ a_{I,J}=
 \begin{cases}
\delta_{I,J}+O(\varepsilon),\quad&|J|_h=|I|_h,\\
o(\varepsilon^{|J|_h-|I|_h}),\quad&|J|_h>|I|_h,\\
O(1),\quad&|J|_h<|I|_h.\\
 \end{cases}
$$
 \end{Proposition}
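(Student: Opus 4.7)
The plan is to reduce this statement to Theorem \ref{decomp} by exploiting that the lifted space $\tilde{U}$ is a regular C-C space, as established in Proposition \ref{lift} and Remark \ref{svob_reg}. The first step is to verify that the vector fields $\{\widehat{\tilde{X}}_I\}_{|I|_h\leq M}$ built in Proposition \ref{lift} coincide with the nilpotent approximations produced by Corollary \ref{alg_prop} applied to the vector fields $\tilde{X}_k$ on the regular space $\tilde{U}$. This follows from the construction: by Proposition \ref{lift} one has $\widehat{\tilde{X}}_k = \tilde{X}_k^{(-d_k)}$ with respect to the dilations \eqref{dil_lift}, and homogeneity combined with the Jacobi identity, applied inductively on $|I|_h$ as in the proof of Proposition \ref{lift_I}, gives $\widehat{\tilde{X}}_I = \tilde{X}_I^{(-|I|_h)}$. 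In particular $\widehat{\tilde{X}}_I(\tilde{u}) = \tilde{X}_I(\tilde{u})$.

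Next, I would select from $\{\tilde{X}_I\}_{|I|_h\leq M}$ a basis $\{\tilde{Y}_j\}_{j=1}^{\tilde{N}}$ associated with the filtration at $\tilde{u}$ in the sense of Definition \ref{min_bas}, and apply Theorem \ref{decomp} directly: on the regular C-C space $\tilde{U}$ it provides decompositions of each basis vector $\tilde{Y}_j$ in the basis $\{\widehat{\tilde{Y}}_k\}$ with exactly the three regimes ($\delta_{j,k}+O(\varepsilon)$, $O(\varepsilon)$, $o(\varepsilon^{|I_k|-|I_j|})$) stated there, valid on the box $|x_j|\leq\varepsilon^{|I_j|}$.

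To pass from basis vectors to an arbitrary commutator $\tilde{X}_I$, I would invoke Proposition \ref{treug_prop}: since $\tilde{X}_I\in\tilde{H}_{|I|_h}$, one has
$$\tilde{X}_I(x) = \sum_{j=1}^{\tilde{N}} \xi_j(x)\,\tilde{Y}_j(x),\quad \text{with } \xi_j(\tilde{u})=0 \text{ whenever } \operatorname{deg}\tilde{Y}_j>|I|_h,$$
and the $C^{M+1}$-smoothness of the coefficients $\xi_j$ ensures $\xi_j(x) = O(\varepsilon)$ on the box when $\operatorname{deg}\tilde{Y}_j > |I|_h$, while $\xi_j = O(1)$ in all cases. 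A straightforward bookkeeping, substituting the decomposition of each $\tilde{Y}_j$ from Theorem \ref{decomp} into the above sum and collecting coefficients of $\widehat{\tilde{X}}_J$, then produces $a_{I,J}$. The boundary case $|J|_h=|I|_h$ uses that $\xi_j(\tilde{u})$ contributes the Kronecker term $\delta_{I,J}$ precisely because $\widehat{\tilde{X}}_I(\tilde{u})=\tilde{X}_I(\tilde{u})$.

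The main obstacle is the bookkeeping in the third regime $|J|_h>|I|_h$, where one must combine two sources of smallness: the factor $O(\varepsilon)$ coming from $\xi_j(\tilde{u})=0$ when $\operatorname{deg}\tilde{Y}_j>|I|_h$, and the estimate $o(\varepsilon^{|J|_h-\operatorname{deg}\tilde{Y}_j})$ from Theorem \ref{decomp}. One has to check that these multiply to $o(\varepsilon^{|J|_h-|I|_h})$ uniformly, which requires splitting the sum over $j$ according to whether $\operatorname{deg}\tilde{Y}_j$ is $\leq |I|_h$ or $>|I|_h$ and verifying the total weight in each subcase. The other two regimes ($|J|_h=|I|_h$ and $|J|_h<|I|_h$) follow immediately from the same substitution and the corresponding cases of Theorem \ref{decomp}.
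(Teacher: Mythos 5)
Your reduction breaks down at the very first step, and the failure is not incidental: it is exactly the phenomenon the paper singles out in the Remark following Theorem \ref{theorem_int_lines}. From $\widehat{\tilde{X}}_I=\tilde{X}_I^{(-|I|_h)}$ you infer $\widehat{\tilde{X}}_I(\tilde{u})=\tilde{X}_I(\tilde{u})$, but passing to the homogeneous component of order $-|I|_h$ discards the constant-coefficient terms along the directions $\frac{\partial}{\partial x_j}$ with $|I_j|_h<|I|_h$ (these are homogeneous of order $-|I_j|_h>-|I|_h$), and such terms do contribute to the value at the origin. Concretely, by Proposition \ref{lift_I} the $y$-part of $\tilde X_I(\tilde u)-\widehat{\tilde X}_I(\tilde u)$ equals $X_I(u)-\widehat X^u_I(u)$, which at a nonregular point $u$ is a generally nonzero vector of $H_{|I|_h-1}(u)$. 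Consequently the frame $\{\widehat{\tilde X}_J\}$ of the proposition does \emph{not} coincide with the nilpotentization that Theorem \ref{al} and Theorem \ref{decomp} attach to the regular space $\tilde U$ (the latter satisfies $\widehat{\tilde Y}_j(\tilde u)=\tilde Y_j(\tilde u)$), so Theorem \ref{decomp} does not apply to the frame you actually need. Worse, if your identification were correct, your substitution would deliver $O(\varepsilon)$ in the case $|J|_h<|I|_h$, i.e.\ the stronger estimate \eqref{dec}; the paper's Remark explains that precisely this is unobtainable here, and the nonzero constants $a_{I,J}(\tilde u)$ for $|J|_h<|I|_h$ are the reason the proposition only claims $O(1)$.

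There is a second, independent gap in the case $|J|_h>|I|_h$. Proposition \ref{treug_prop} together with $C^{M+1}$-smoothness yields only $\xi_j(x)=O(\varepsilon)$ when $\operatorname{deg}\tilde Y_j>|I|_h$ --- a single order of $\varepsilon$ --- so a term with $|I|_h<\operatorname{deg}\tilde Y_j<|J|_h$ contributes $O(\varepsilon)\cdot o(\varepsilon^{|J|_h-\operatorname{deg}\tilde Y_j})$, which is not $o(\varepsilon^{|J|_h-|I|_h})$ as soon as $\operatorname{deg}\tilde Y_j\geq|I|_h+2$. Closing this requires the full weighted Taylor expansion, i.e.\ $\xi_j=O(\varepsilon^{\operatorname{deg}\tilde Y_j-|I|_h})$ on the box, which is what Proposition \ref{vf_decomp} provides and what the paper in fact uses: it writes $\tilde X_I=\widehat{\tilde X}_I+R_I$ with $R_I$ a sum of terms of homogeneity order strictly greater than $-|I|_h$, expands $R_I$ in the homogeneous frame $\{\widehat{\tilde X}_J\}$ with polynomial coefficients of weighted degree greater than $|J|_h-|I|_h$, and reads off all three estimates from the size of these monomials on the box $|x_j|\leq\varepsilon^{|I_j|}$. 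That direct route, via Propositions \ref{lift_I} and \ref{vf_decomp}, bypasses Theorem \ref{decomp} entirely and is the one you should take.
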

\begin{proof}
From Propositions \ref{lift_I} and \ref{vf_decomp} it follows that
$$\tilde{X}_{I}(x)=\widehat{\tilde{X}}_{I}(x)+R_I(x),$$ where
$x=(y,z)\in\mathbb R^{\tilde{N}}$, while the vector field $R_I$
consists of summands of  homogeneity order, w.r.t. the dilations
\eqref{dil_lift}, bigger than
 $-|I|_h$. Since the vector fields $\widehat{\tilde{X}}_J$
 are homogeneous of order $|J|_h$, we have
$$R_I(x)=\sum\limits_{|J|_h\leq M}\sum\limits_{|\alpha|_h>
|J|_h-|I|_h} c_{Il}x^{\alpha}\widehat{\tilde{X}}_J=$$
$$=\sum\limits_{|J|_h>|I|_h}\varepsilon^{|J|_h-|I|_h+1}
(O(1)+O(\varepsilon))\widehat{\tilde{X}}_J+$$$$+\sum\limits_{|J|_h=|I|_h}
\varepsilon(O(1)+O(\varepsilon))\widehat{\tilde{X}}_J+
\sum\limits_{|J|_h<|I|_h}(O(1)+O(\varepsilon))\widehat{\tilde{X}}_J=\sum
a_{I,J}\widehat{\tilde{X}}_J,$$

from where the proposition follows.


\end{proof}

Next we introduce an important characteristic of the C-C space
$\mathbb M$.

\begin{Definition}\label{int_lines_koef}
 Let $u,v\in U$, $r>0$.  {\it The divergence of integral lines}
with nilpotentizations centered at $u$ over a box of radius $r$
centered at $v$ is the value
\begin{equation}\label{R}
R(u,v,r)=\max\{\underset{\widehat{y}\in
B^{\rho^u}(v,r)}\sup\{\rho^u(y, \widehat{y})\},\underset{y\in
B^{\rho}(v,r)}\sup\{\rho(y, \widehat{y})\}\}.\end{equation} Here
the points $y$ and $\widehat{y}$ are defined as follows. Let
$\gamma(t)$ be an arbitrary curve, defined as a solution of the
system of ODE
$$\begin{cases}\dot{\gamma}(t)=\sum\limits_{|I|_h\leq M}b_I
\widehat{X}^u_I(\gamma(t)),\\
\gamma(0)=v, \gamma(1)=\widehat{y},
\end{cases}$$
and
\begin{equation}\label{b_I}\rho^u(v,\widehat{y})\leq\underset{|I|_h\leq
M}\max\{|b_I|^{1/|I|_h}\}\leq r.\end{equation} Define
$y=\text{exp}(\sum\limits_{|I|_h\leq M}b_I X_I)(v)$. In this way,
the supremum in the first expression of \eqref{R} is taken not
only over $\widehat{y}\in B^{\rho^u}(v,r)$, but also over the
infinite set of the possible $\{b_I\}_{|I|_h\leq M}$, satisfying
\eqref{b_I}. The
second expression is understood in a similar way.
\end{Definition}

\begin{Proposition}\label{lemma_incl}
Let $u,v\in U$ and $r>0$. Then the following inclusions are true:
\begin{equation}\label{incl1}B^{\rho}(v,r)
\subseteq B^{\rho^u}(v,r+CR(u,v,r)),\end{equation}
\begin{equation}\label{incl2}B^{\rho^u}(v,r)\subseteq
B^{\rho}(v,r+CR(u,v,r)+O(r^{1+\frac{1}{M}})+O(R(u,v,r)^{1+
\frac{1}{M}})),\end{equation}
 where $R(u,v,r)$ is defined by
\eqref{R}.
\end{Proposition}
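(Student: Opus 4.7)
The plan is to combine the very definition of the divergence $R(u,v,r)$ (which by construction bounds the gap between endpoints of corresponding $X_I$- and $\widehat{X}_I^u$-driven flows out of $v$) with the rolling-of-the-box lemma (Proposition \ref{prokat}). The two inclusions follow by symmetric arguments with the roles of $X_I$ and $\widehat{X}_I^u$ swapped, and the correction terms in \eqref{incl2} come out automatically from the weaker inclusion \eqref{prokat_incl} valid for the original quasimetric $\rho$.

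For \eqref{incl1}, take $z\in B^{\rho}(v,r)$ and, mimicking the setup from the proof of Proposition \ref{treug_ineq}, pick constants $\{b_I\}_{|I|_h\leq M}$ such that
\[ z = \exp\Bigl(\sum_{|I|_h\leq M} b_I X_I\Bigr)(v), \qquad \max_I |b_I|^{1/|I|_h} < r. \]
Setting $\widehat z = \exp\bigl(\sum_{|I|_h\leq M} b_I \widehat{X}_I^u\bigr)(v)$, the definition of $\rho^u$ gives $\rho^u(v,\widehat z) \leq \max_I|b_I|^{1/|I|_h} < r$, so $\widehat z\in B^{\rho^u}(v,r)$. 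The first supremum in Definition \ref{int_lines_koef} applied to this pair yields $\rho^u(z,\widehat z)\leq R(u,v,r)$. Inserting a small $\eta>0$ to pass to open balls and applying Proposition \ref{prokat}, inclusion \eqref{prokat_incl_u}, to $z \in B^{\rho^u}(\widehat z,\, R(u,v,r)+\eta)$ with $\widehat z\in B^{\rho^u}(v,r)$, we obtain $z\in B^{\rho^u}(v,\, r + C(R(u,v,r)+\eta))$. Sending $\eta\to 0$ proves \eqref{incl1}.

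For \eqref{incl2} the argument is the mirror image: start with $z\in B^{\rho^u}(v,r)$, choose $b_I$ so that $z = \exp(\sum b_I\widehat{X}_I^u)(v)$ with $\max_I|b_I|^{1/|I|_h}<r$, and set $z' = \exp(\sum b_I X_I)(v)$, so $z'\in B^{\rho}(v,r)$. The second supremum in Definition \ref{int_lines_koef} now gives $\rho(z,z')\leq R(u,v,r)$. Applying the corresponding inclusion \eqref{prokat_incl} of Proposition \ref{prokat} to $z\in B^{\rho}(z',\, R(u,v,r)+\eta)$ with $z'\in B^{\rho}(v,r)$, and letting $\eta\to 0$, produces exactly the correction $C\,R(u,v,r) + O(r^{1+1/M}) + O(R(u,v,r)^{1+1/M})$ appearing in \eqref{incl2}.

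The only genuine subtlety I foresee is cosmetic: the divergence bound is a weak inequality $\rho^u(z,\widehat z)\leq R(u,v,r)$ while Proposition \ref{prokat} is formulated for open balls, and this mismatch is handled by the small $\eta>0$ trick above. A secondary (but already routine in this paper) point is the use of constant-coefficient exponential representatives to realize the infimum defining $\rho$ and $\rho^u$ up to $\zeta$; this is precisely the device already employed in the proof of Proposition \ref{treug_ineq}, so no new work is needed. Beyond this bookkeeping, the proposition reduces to a direct two-step assembly of already-established inputs, so no deeper technical hurdle is expected.
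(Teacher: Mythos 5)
Your proof is correct and follows essentially the same route as the paper's: represent a point of one ball by a constant-coefficient exponential, pair it with the corresponding exponential in the other family so that Definition \ref{int_lines_koef} bounds the gap by $R(u,v,r)$, and then absorb that gap via the appropriate inclusion of Proposition \ref{prokat}. Your explicit $\eta$-handling of the open/closed ball mismatch is a minor tidying of a point the paper glosses over; otherwise the arguments coincide.
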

\begin{proof}
Let $y\in B^{\rho}(v,r)$, i.e. $\rho(v,y)<r$, and show that
$\rho^u(v,y)<r+CR(u,v,r)$ for some constant $C$.

By definition of the quasimetric $\rho$, for arbitrarily small
$\zeta>0$ there are $\{a_I\}_{|I|_h\leq M}$, such that
$$y=\text{exp}(\sum\limits_{|I|_h\leq M}a_IX_I)(v)$$ and
$$\underset{|I|_h\leq
M}\max\{|a_I|^{1/|I|_h}\}\leq\rho(v,y)+\zeta\leq r.$$
 Consider a point
$\widehat{y}=\text{exp}(\sum\limits_{|I|_h\leq M}a_IX_I)(v).$ Then
$\widehat{y}\in B^{\rho^u}(v,r)$, since
$$\rho^u(v,\widehat{y})\leq\underset{|I|_h\leq
M}\max\{|a_I|^{1/|I|_h}\}\leq r.$$ Obviously,
$\rho^u(y,\widehat{y})<R(u,v,r)$. Hence, by \eqref{prokat_incl_u},
$$y\in\underset{x\in B^{\rho^u}(v,r)}\bigcup B^{\rho^u}(x,R(u,v,r))
\subseteq B^{\rho^u}(v,r+CR(u,v,r)),$$ and \eqref{incl1} is
proved.

The inclusion \eqref{incl2} is proved in the same way with the
application of \eqref{prokat_incl}.
\end{proof}

\begin{Theorem}[\rm Theorem on divergence of integral lines]
\label{theorem_int_lines} Let $u,v\in U$,
$\rho(u,v)=O(\varepsilon)$, $r=O(\varepsilon)$ and
$B^{\rho}(v,r)\cup B^{\rho^u}(v,r)\subseteq U$. Then we have the
following estimate on divergence of integral lines from Definition
$\ref{int_lines_koef}$:
$$R(u,v,r)=O(\varepsilon^{1+\frac{1}{M}}).$$
\end{Theorem}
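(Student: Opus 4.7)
The plan is to reduce the non-regular case to the regular case via the lifting construction of Proposition \ref{lift}, and then quote Theorem \ref{int_lines_perem} (the regular-case divergence estimate of Karmanova--Vodopyanov).

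First I would fix, for an arbitrarily small $\zeta>0$, coefficients $\{a_I\}_{|I|_h\leq M}$ realizing $\rho(u,v)$ up to $\zeta$, i.e. $v=\exp\bigl(\sum_{|I|_h\leq M}a_IX_I\bigr)(u)$ with $\max|a_I|^{1/|I|_h}\le\rho(u,v)+\zeta=O(\varepsilon)$. Set $\tilde u=(u,0)\in\tilde U$ and \emph{define} the lift $\tilde v=\exp\bigl(\sum a_I\tilde X_I\bigr)(\tilde u)$. By Propositions \ref{lift} and \ref{lift_I} we have $\pi(\tilde v)=v$ and $\tilde\rho(\tilde u,\tilde v)\le\max|a_I|^{1/|I|_h}=O(\varepsilon)$.

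Next, for an arbitrary admissible tuple $\{b_I\}_{|I|_h\leq M}$ with $\max|b_I|^{1/|I|_h}\le r=O(\varepsilon)$ that enters Definition \ref{int_lines_koef}, I would lift the two ODEs: consider
\[
\begin{cases}\dot{\tilde\gamma}(t)=\sum_{|I|_h\leq M}b_I\tilde X_I(\tilde\gamma(t)),\\ \tilde\gamma(0)=\tilde v,\end{cases}\qquad\begin{cases}\dot{\widehat{\tilde\gamma}}(t)=\sum_{|I|_h\leq M}b_I\widehat{\tilde X}_I(\widehat{\tilde\gamma}(t)),\\ \widehat{\tilde\gamma}(0)=\tilde v,\end{cases}
\]
and denote $\tilde y=\tilde\gamma(1)$, $\widehat{\tilde y}=\widehat{\tilde\gamma}(1)$. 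The block-triangular form of the lifted fields in \eqref{lift_vf_I} guarantees that the first $N$ coordinates of these curves satisfy exactly the ODEs defining $y$ and $\widehat y$ in $U$, so $\pi(\tilde y)=y$ and $\pi(\widehat{\tilde y})=\widehat y$.

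Now I would apply Theorem \ref{int_lines_perem} on the regular lifted space $\tilde U$ (regularity on a neighborhood of $\tilde u$ is granted by Proposition \ref{lift} and Remark \ref{svob_reg}). The hypotheses are met: $\tilde d_{\infty}(\tilde u,\tilde v)=O(\varepsilon)$ as above, and $\int_0^1|b_I|\,dt=|b_I|\le r^{|I|_h}=O(\varepsilon^{|I|_h})$ plays the role of the $S\varepsilon^{\deg}$ bound. The theorem yields
\[
\max\bigl\{\tilde\rho(\tilde y,\widehat{\tilde y}),\ \tilde\rho^{\tilde u}(\tilde y,\widehat{\tilde y})\bigr\}=O(\varepsilon^{1+\frac{1}{M}}).
\]
Finally, by the distance-decreasing inequalities \eqref{ineq_rho}, \eqref{ineq_u_rho} of Proposition \ref{ineq_prop},
\[
\rho(y,\widehat y)\le\tilde\rho(\tilde y,\widehat{\tilde y})=O(\varepsilon^{1+\frac{1}{M}}),\qquad \rho^u(y,\widehat y)\le\tilde\rho^{\tilde u}(\tilde y,\widehat{\tilde y})=O(\varepsilon^{1+\frac{1}{M}}).
\]
These estimates are uniform in the choice of $\{b_I\}$ (since the implicit constants in Theorem \ref{int_lines_perem} depend only on $\tilde U$ and $S$, which is uniform here), hence passing to the supremum in the definition of $R(u,v,r)$ gives the claim; letting $\zeta\to 0$ eliminates the auxiliary parameter.

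The main obstacle is verifying that Theorem \ref{int_lines_perem} really does apply to the lifted curves: one must check that $\tilde y,\widehat{\tilde y}$ stay in the regular neighborhood of $\tilde u$ on which Proposition \ref{lift} and \eqref{lift_vf_I} operate (which follows from $r,\rho(u,v)=O(\varepsilon)$ and the assumption $B^{\rho}(v,r)\cup B^{\rho^u}(v,r)\subset U$ together with short-time existence of the lifted flows), and that the coefficient bound $|b_I|\le r^{|I|_h}$ translates to the integral hypothesis of Theorem \ref{int_lines_perem} with a constant $S$ that is uniform over the whole supremum. Everything else is a direct application of the already-established lifting and regular-case machinery.
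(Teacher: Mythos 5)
Your proposal is correct and follows essentially the same route as the paper's own proof: lift $u,v$ and the two integral curves to the regular free space $\tilde U$ via Proposition \ref{lift}, apply the regular-case Theorem \ref{int_lines_perem} there (its applicability being justified by the decomposition \eqref{dec1}), and project back down using the distance-decreasing inequalities of Proposition \ref{ineq_prop}. Your write-up is in fact slightly more explicit than the paper's about why $\pi(\tilde y)=y$ and $\pi(\widehat{\tilde y})=\widehat y$ and about the uniformity of the constants over the supremum, but the argument is the same.
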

\begin{proof}
For a fixed point $\widehat{y}\in B^{\rho^u}(v,r)$ and $\zeta>0$ we consider
arbitrary $\{b_I\}_{|I|_h\leq M}$ such that
$$\widehat{y}=\operatorname{exp}(\sum\limits_{|I|_h\leq
M}b_I\widehat{X}^u_I)(v)\quad\text{
 and
}\quad\underset{|I|_h\leq
M}\max\{|b_I|^{1/|I|_h}\}\leq\rho^u(v,\widehat{y})+\zeta\leq r.$$ Let
$y=\operatorname{exp}\bigl(\sum\limits_{|I|_h\leq M}b_IX_I\bigr)(v)$ and
 $v=\text{exp}\Bigl(\sum\limits_{|I|_h\leq M}v_IX_I\Bigr)(u)\in
U$. Consider points
$$\tilde{v}=\text{exp}\Bigl(\sum\limits_{|I|_h\leq
M}v_I\tilde{X}_I\Bigr)(u,0)\in \tilde{U}\quad\text{
 and
}\quad\tilde{y}=\text{exp}\Bigl(\sum\limits_{|I|_h\leq
M}b_I\tilde{X}_I\Bigr)(\tilde{v})\in\tilde{U}.$$ Then
$$\tilde{\rho}(\tilde{v},\tilde{y})=\underset{|I|_h\leq
M}\max\{|b_I|^{1/|I|_h}\}=O(\varepsilon).$$ Let
$\widehat{\tilde{y}}:=\text{exp}(\sum\limits_{|I|_h\leq
M}b_I\tilde{X_I})(v)$. Since all points of $\tilde{U}$ are
regular, from Theorem \ref{int_lines_perem} it follows that
$$\max\{\tilde{\rho}(\tilde{y},\widehat{\tilde{y}}),
\tilde{\rho}^{\tilde{u}} (\tilde{y},\widehat{\tilde{y}})\} =
O(\varepsilon^{1+\frac{1}{M}}),$$ from where, taking into account
Proposition 8, the proposition follows. The application of this
theorem is possible due to Proposition 11.
\end{proof}

\begin{Remark}
In the paper  \cite{vk1}, where Theorem \ref{int_lines_perem} was
proved,  the nilpotentized vector fields satisfy estimates
\eqref{dec} which are stronger than \eqref{dec1}, namely, with
$O(\varepsilon)$ in place of $O(1)$ in the last estimate. Here we
can not guarantee $O(\varepsilon)$ because, in contrast to the
case of regular points, not all of the values of commutators
$\widehat{X}_I(u)$ at $u$ might coincide with the values $X_I(u)$
(see \cite{herm} and references therein). Nevertheless, a revision
of the proof of Theorem \ref{int_lines_perem} shows that it holds
also with these weaker estimates. Note also that this theorem
\ref{int_lines_perem} is true in any coordinates, in  which the
decomposition \eqref{dec} or \eqref{dec1} is true.
\end{Remark}

\begin{Theorem}[\rm Local approximation theorem]\label{lat_quasim}
For any points $u\in U$ and $v,w\in U$, such
that $\rho(u,v)=O(\varepsilon)$, $\rho(u,w)=O(\varepsilon)$, we
have
$$|\rho(v,w)-\rho^u(v,w)|=O(\varepsilon^{1+\frac{1}{M}}).$$
\end{Theorem}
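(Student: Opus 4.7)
The strategy is to combine the ball inclusions from Proposition \ref{lemma_incl} with the bound $R(u,v,r)=O(\varepsilon^{1+1/M})$ supplied by Theorem \ref{theorem_int_lines}. The point is that \eqref{incl1} and \eqref{incl2} encode precisely how much the $\rho$- and $\rho^u$-balls with the same center and radius can differ, and once we know $R(u,v,r)$ is small compared to the radius we instantly obtain the two-sided comparison of the quasimetrics.

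First I would check that both $\rho(v,w)$ and $\rho^u(v,w)$ are $O(\varepsilon)$. The first is immediate from the generalized triangle inequality \eqref{treug_rho}. For the second, I would apply \eqref{incl1} to the pair $(u,u)$ with radius $r=\rho(u,v)+o(1)=O(\varepsilon)$, getting $\rho^u(u,v)\le\rho(u,v)+CR(u,u,r)=O(\varepsilon)+O(\varepsilon^{1+1/M})=O(\varepsilon)$ via Theorem \ref{theorem_int_lines}, and similarly for $\rho^u(u,w)$; then \eqref{treug_rhou} gives $\rho^u(v,w)=O(\varepsilon)$.

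Next, fix $\zeta>0$ arbitrarily small and set $r=\rho(v,w)+\zeta$. By definition of $\rho$ we have $w\in B^{\rho}(v,r)$, and the inclusion \eqref{incl1} yields $w\in B^{\rho^u}(v,r+CR(u,v,r))$, i.e.\ $\rho^u(v,w)\le r+CR(u,v,r)$. Since $r=O(\varepsilon)$, Theorem \ref{theorem_int_lines} bounds $R(u,v,r)=O(\varepsilon^{1+1/M})$. Letting $\zeta\to 0$ produces
$$\rho^u(v,w)-\rho(v,w)\le O(\varepsilon^{1+1/M}).$$
The reverse estimate is entirely parallel, using \eqref{incl2} with $r=\rho^u(v,w)+\zeta$; the additional remainders $O(r^{1+1/M})$ and $O(R(u,v,r)^{1+1/M})$ there are themselves already $O(\varepsilon^{1+1/M})$ (in fact the second one is smaller), so no new term is introduced. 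Combining the two directions gives $|\rho(v,w)-\rho^u(v,w)|=O(\varepsilon^{1+1/M})$.

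The substantive work has already been done in Theorem \ref{theorem_int_lines} and in the ``rolling-of-the-box'' estimate driving Proposition \ref{lemma_incl}; the remaining subtlety is purely bookkeeping. The only genuine technical point to verify is that for the chosen $r=O(\varepsilon)$ the balls $B^{\rho}(v,r)\cup B^{\rho^u}(v,r)$ lie in $U$, so that Theorem \ref{theorem_int_lines} is legitimately applicable -- this holds once $\varepsilon$ is small enough because $\rho(u,v)=O(\varepsilon)$ and both balls are contained in a $\rho$-ball of radius $O(\varepsilon)$ around $u$ by \eqref{treug_rho} (and its $\rho^u$-analogue combined with the first step above). I expect no further obstacle: the proof is essentially a two-line deduction from the divergence-of-integral-lines estimate and the rolling-of-the-box lemma, and is structurally identical to the derivation of Theorem \ref{lat_dc} from Theorem \ref{int_lines_dc} in the regular sub-Riemannian case.
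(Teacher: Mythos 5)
Your argument is correct and follows essentially the same route as the paper: both directions come from the ball inclusions of Proposition \ref{lemma_incl} with $r$ taken to be (essentially) $\rho(v,w)$ and $\rho^u(v,w)$ respectively, combined with the bound $R(u,v,r)=O(\varepsilon^{1+\frac{1}{M}})$ from Theorem \ref{theorem_int_lines} once $r=O(\varepsilon)$ is secured via the generalized triangle inequalities. Your preliminary verification that $\rho^u(v,w)=O(\varepsilon)$ is slightly more roundabout than necessary (it already follows from the first inequality $\rho^u(v,w)\le\rho(v,w)+CR(u,v,r)$), but this is a harmless difference in bookkeeping, not in substance.
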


\begin{proof}
 In Proposition
\ref{lemma_incl} let $r:=\rho(v,w)$. Then
$w\in\bar{B}^{\rho}(v,r)$, hence
$$\rho^u(v,w)\leq\rho(v,w)+CR(u,v,r).$$ In the same way, setting
$r:=\rho^u(v,w)$, we obtain
$$\rho(v,w)\leq\rho^u(v,w)+CR(u,v,r)+O(r^{1+\frac{1}{M}})+
O(R(u,v,r)^{1+\frac{1}{M}}).$$ Due to Proposition \ref{treug}
(generalized triangle inequality for $\rho$) we have
$r=O(\varepsilon)$, since from Theorem \ref{theorem_int_lines} the
proposition follows.
\end{proof}


\section{The tangent cone theorems}

First we briefly recall the notion and basic properties of
convergence of a sequence of quasimetric spaces, as well as the
notion of the tangent cone to a quasimetric space, introduced in
\cite{dan,smj} as an extension of Gromov's theory for metric
spaces.

The {\it distortion} (see e.g. \cite{bur}) of a mapping
$f:(X,d_X)\to (Y,d_Y)$ is the
value
$$\operatorname{dis}(f)=\sup\limits_{u,v\in
X}|d_Y(f(u),f(v))-d_X(u,v)|,$$ which is a measure of difference of
$f$ from an isometry.

\begin{Definition}[\cite{dan,smj}]\label{ghq-dist}
The {\it distance} $d_{qm}(X,Y)$ between quasimetric spaces
\linebreak $(X,d_X)$ and $(Y,d_Y)$ is defined as the infimum taken
over $\rho>0$ for which there exist (not necessarily continuous)
mappings $f:X\to Y$ and $g:Y\to X$ such that
$$\max\Bigl\{\operatorname{dis}(f),\,
\operatorname{dis}(g),\,\sup\limits_{x\in X}d_{X}(x,g(f(x))),\,
\sup\limits_{y\in Y}d_{Y}(y,f(g(y)))\Bigr\}\leq\rho.
$$
\end{Definition}

Note that for bounded quasimetric spaces the introduced distance is
obviously finite.

\begin{Proposition}\label{s1} The distance $d_{qm}$ possesses the following
properties$:$

$1)$ if quasimetric spaces $X$ and $Y$ are isometric, then
$d_{qm}(X,Y)=0$; if $X$ and $Y$ are compact and $d_{qm}(X,Y)=0$,
then $X$ and $Y$ are isometric $($nondegeneracy$)$.

$2)$ $d_{qm}(X,Y)=d_{qm}(Y,X)$ $($symmetry$)$.

$3)$ $d_{qm}(X,Y)\leq(Q_Z+1)(d_{qm}(X,Z)+d_{qm}(Z,Y))$ $($analog of
the generalized triangle inequality$)$.
\end{Proposition}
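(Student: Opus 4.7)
Property $(2)$ is essentially tautological: the definition in \ref{ghq-dist} is manifestly symmetric in the two spaces under swapping the roles of $f$ and $g$, so any pair witnessing $\rho$ for $d_{qm}(X,Y)$ also witnesses $\rho$ for $d_{qm}(Y,X)$. I would dispose of it in one sentence.

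For property $(1)$, the forward direction is immediate: given a bijective isometry $\varphi:X\to Y$, set $f=\varphi$ and $g=\varphi^{-1}$; both distortions are $0$ and both composition terms $x\mapsto g(f(x))$, $y\mapsto f(g(y))$ are the identity, so all four quantities in the max vanish. For the converse in the compact case, I would argue by extracting a limit of near-witnesses. Choose $\rho_n\downarrow 0$ with pairs $(f_n,g_n)$ realizing the $\rho_n$-condition. Fix a countable dense subset $D=\{x_k\}\subset X$; by compactness of $Y$ and a diagonal extraction, pass to a subsequence along which $f_n(x_k)$ converges to some $F(x_k)\in Y$ for every $k$. Since $|d_Y(f_n(x_k),f_n(x_l))-d_X(x_k,x_l)|\leq\rho_n\to 0$, the map $F$ is distance-preserving on $D$ and, by uniform continuity, extends to an isometric embedding $f:X\to Y$. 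Running the same argument for $g_n$ (using compactness of $X$) yields $g:Y\to X$. Passing to the limit in $d_X(x,g_n(f_n(x)))\leq\rho_n$ and $d_Y(y,f_n(g_n(y)))\leq\rho_n$ forces $g\circ f=\operatorname{id}_X$ and $f\circ g=\operatorname{id}_Y$, so $f$ is the desired isometry.

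For property $(3)$, I would fix $\varepsilon>0$, pick $\rho_1<d_{qm}(X,Z)+\varepsilon$ and $\rho_2<d_{qm}(Z,Y)+\varepsilon$, together with witnessing pairs $(f_1,g_1)$ and $(f_2,g_2)$, and form the natural compositions $f=f_2\circ f_1:X\to Y$ and $g=g_1\circ g_2:Y\to X$. Distortion is subadditive under composition, so $\operatorname{dis}(f)\leq\operatorname{dis}(f_1)+\operatorname{dis}(f_2)\leq\rho_1+\rho_2$ and similarly $\operatorname{dis}(g)\leq\rho_1+\rho_2$. For the approximate-inverse term $d_X(x,g_1g_2f_2f_1(x))$, I would insert the intermediate point $g_1(f_1(x))$: the first piece is controlled by $\rho_1$ (from the $X$-$Z$ witness), while the second piece $d_X(g_1(f_1(x)),g_1(g_2(f_2(f_1(x)))))$ is bounded by $d_Z(f_1(x),g_2(f_2(f_1(x))))+\operatorname{dis}(g_1)\leq\rho_2+\rho_1$ using that $\operatorname{dis}(g_1)$ compares $d_X$ with $d_Z$. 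Symmetrically for $d_Y(y,f_2f_1g_1g_2(y))$, inserting $f_2(g_2(y))$ and using $\operatorname{dis}(f_2)$ to compare $d_Y$ with $d_Z$.

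The single nontrivial step — and the one I expect to be the main obstacle — is getting the constant $(Q_Z+1)$ (rather than $Q_X$ or $Q_Y$) out of the combination. The point is that the mixing of the two estimates occurs inside $Z$: the two inserted intermediate points $g_1(f_1(x))$ and $g_1(g_2(f_2(f_1(x))))$ are both images under $g_1$ of points of $Z$ whose $d_Z$-separation is controlled by $\rho_2$, and the triangle inequality governing how these $Z$-based estimates combine carries the constant $Q_Z$; the additional $+1$ absorbs the $\rho_1$-term coming from the witness condition of $(f_1,g_1)$. After a careful arrangement one obtains
\[
d_X(x,g(f(x)))\leq(Q_Z+1)(\rho_1+\rho_2),\qquad d_Y(y,f(g(y)))\leq(Q_Z+1)(\rho_1+\rho_2),
\]
and the same bound dominates $\operatorname{dis}(f),\operatorname{dis}(g)$. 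Letting $\varepsilon\to 0$ finishes the proof.
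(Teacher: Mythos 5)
The paper itself gives no proof of Proposition \ref{s1} --- it is imported from \cite{dan,smj} with only the remark that the constant in 3) depends on $Q_Z$ --- so your argument has to stand on its own. Parts 1) and 2) are essentially fine: symmetry is tautological, the forward half of 1) is immediate, and your diagonal-extraction argument for the converse is the standard Gromov one. Two caveats there: passing to the limit in $|d_Y(f_n(x_k),f_n(x_l))-d_X(x_k,x_l)|\le\rho_n$ requires $d_Y$ to be continuous along convergent sequences, whereas Definition \ref{quasim} only grants upper semicontinuity in the first argument; and concluding $g\circ f=\operatorname{id}_X$ from $d_X(x,g_n(f_n(x)))\le\rho_n$ needs control of $g_n$ at the moving points $f_n(x)$, not just on a fixed dense set. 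Both are repairable but should be acknowledged.

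The genuine gap is in 3). With the decomposition you chose, the chaining happens in $X$, not in $Z$: writing $z=f_1(x)$ and $z'=g_2(f_2(z))$, you combine $d_X(x,g_1(z))\le\rho_1$ with $d_X(g_1(z),g_1(z'))\le\operatorname{dis}(g_1)+d_Z(z,z')\le\rho_1+\rho_2$, and the only way to concatenate two distances in $X$ is the generalized triangle inequality of $X$. That yields $d_X(x,g(f(x)))\le Q_X(2\rho_1+\rho_2)$, and symmetrically a $Q_Y$ for the other term --- a correct inequality, but with constant of order $2\max\{Q_X,Q_Y\}$ rather than the asserted $Q_Z+1$; your claim that ``the mixing of the two estimates occurs inside $Z$'' does not match the computation you actually set up. To get $Q_Z+1$ one must transport the whole estimate into $Z$ \emph{before} chaining: with $a=f_1(x)$ and $b=g_2(f_2(a))$, so that $g(f(x))=g_1(b)$,
\begin{align*}
d_X(x,g_1(b))&\le d_Z(f_1(x),f_1(g_1(b)))+\operatorname{dis}(f_1)\\
&\le Q_Z\bigl(d_Z(a,b)+d_Z(b,f_1(g_1(b)))\bigr)+\rho_1\le Q_Z(\rho_2+\rho_1)+\rho_1\le(Q_Z+1)(\rho_1+\rho_2),
\end{align*}
using $\operatorname{dis}(f_1)\le\rho_1$, $\sup_{z}d_Z(z,g_2(f_2(z)))\le\rho_2$ and $\sup_{z}d_Z(z,f_1(g_1(z)))\le\rho_1$; the symmetric estimate for $d_Y(y,f(g(y)))$ uses $\operatorname{dis}(g_2)$ in the same role. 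Here the single application of a triangle inequality takes place in $Z$, which is where $Q_Z$ enters, and the additive $\rho_1$ (resp.\ $\rho_2$) coming from the distortion is exactly what the ``$+1$'' absorbs; the distortion terms $\operatorname{dis}(f),\operatorname{dis}(g)\le\rho_1+\rho_2$ are dominated since $Q_Z\ge1$.
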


Note that the constant in 3) depends on the constant $Q_Z$.





By means of the (quasi)distance $d_{qm}$ a convergence, the limit
by which is unique up to isometry, for compact quasimetric spaces
can be introduced, in a similar way as it was done for metric
spaces. Namely, for a sequence $\{X_n\}$ of compact quasimetric
spaces, we say that $X_n\to X$, if $d_{qm}(X_n,X)\to 0$, when
$n\to\infty$. Note that a straightforward generalization  of
Gromov's definition of the distance $d_{GH}$ between two metric
spaces is possible only for a particular class of quasimetric
spaces \cite{gre1}.

 For noncompact spaces we use the following more
general notion of convergence. A {\it pointed $($quasi$)$metric
space} is a pair $(X,p)$ consisting of a (quasi)metric space $X$
and a point $p\in X$. Whenever we want to emphasize what kind of
(quasi)metric is on $X$, we shall write the pointed space as a
triple $(X,p,d_X)$.

\begin{Definition}\label{km_conv_nc}
A sequence $(X_n,p_n,d_{X_n})$ of pointed quasimetric spaces {\it
converges} to the pointed space $(X,p,d_X)$, if there exists a
sequence of reals $\delta_n\to 0$ such that for each $r>0$ there
exist mappings $f_{n,r}:B^{d_{X_n}}(p_n,r+\delta_n)\to X,\
g_{n,r}:B^{d_{X}}(p,r+2\delta_n)\to X_n$ such that

1) $f_{n,r}(p_n)=p, \ g_{n,r}(p)=p_n$;

2) $\operatorname{dis}(f_{n,r})<\delta_n,\
\operatorname{dis}(g_{n,r})<\delta_n;$

3) $\sup\limits_{x\in
B^{d_{X_n}}(p_n,r+\delta_n)}d_{X_n}(x,g_{n,r}(f_{n,r}(x)))<\delta_n$.
\end{Definition}

Recall that a quasimetric space $X$ is {\it boundedly compact}, if
all closed bounded subsets of $X$ are compact. Two pointed
quasimetric spaces $(X,p)$ and $(Y,q)$ are called {\it isometric},
if there exists an isometry $\eta:Y\to X$ such that $\eta(q)=p$. The
following theorem $($see \cite{dan,smj} for details$)$ informally
states
 that, for boundedly compact spaces, the limit is unique up to isometry.

\begin{Theorem}\label{ed_km}
1) Reduced to the case of metric spaces, the convergence of Definition \ref{km_conv_nc} is equivalent to the Gromov-Hausdorff convergence.

2) Let $(X,p),\ (Y,q)$ be two complete pointed quasimetric spaces
obtained as limits $($in the sense of definition \ref{km_conv_nc}$)$
of the same sequence $(X_n,p_n)$ such that $|Q_{X_n}|\leq C$ for all
$n\in\mathbb N$. If $X$ is boundedly compact then $(X,p)$ and
$(Y,q)$ are isometric.
\end{Theorem}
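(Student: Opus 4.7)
For Part 1, in the metric setting $c_X = Q_X = 1$, I would set up a dictionary between Definition \ref{km_conv_nc} and pointed Gromov-Hausdorff convergence as expressed via $\varepsilon$-isometries between balls. From $(f_{n,r}, g_{n,r})$ satisfying conditions (1)--(3) of Definition \ref{km_conv_nc}, the map $f_{n,r}$ has distortion less than $\delta_n$, and condition (3) combined with the distortion bound on $g_{n,r}$ forces $f_{n,r}(B^{d_{X_n}}(p_n, r))$ to be $O(\delta_n)$-dense in $B^{d_X}(p, r - O(\delta_n))$: after a small radius adjustment, this is precisely a pointed $O(\delta_n)$-isometry in the classical sense. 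Conversely, from a pointed $\varepsilon$-isometry $\phi$ one manufactures the partner $g$ by assigning to each target point a closest source point in a chosen $\varepsilon$-dense subset, whence conditions (1)--(3) follow directly from density.

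For Part 2, let $(f^X_n, g^X_n)$ and $(f^Y_n, g^Y_n)$ witness $X_n \to X$ and $X_n \to Y$ with common error $\delta_n \to 0$. For each radius $r$ and $n$ sufficiently large I would form the compositions
\begin{equation*}
G_n := f^X_n \circ g^Y_n \colon B^{d_Y}(q, r) \to X, \qquad F_n := f^Y_n \circ g^X_n \colon B^{d_X}(p, r) \to Y.
\end{equation*}
Distortion is additive under composition (the verification uses only the additive form of the definition and goes through for quasimetrics), so $\operatorname{dis}(G_n), \operatorname{dis}(F_n) \leq 2\delta_n$, while $G_n(q) = p$ and $F_n(p) = q$. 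Applying condition (3) of both convergences and chaining via the generalized triangle inequality, with $|Q_{X_n}| \leq C$ controlling the accumulated constants, one checks that $G_n \circ F_n$ is $O(\delta_n)$-close to $\operatorname{id}_X$ on $B^{d_X}(p, r)$, and symmetrically $F_n \circ G_n$ to $\operatorname{id}_Y$.

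I would then show that bounded balls of $Y$ are totally bounded: any $\varepsilon$-separated subset $\{y_1, \ldots, y_k\} \subseteq B^{d_Y}(q, r)$ pushes forward under $G_n$ for $n$ large to an $(\varepsilon - o(1))$-separated subset of a bounded ball in $X$, which is compact by hypothesis, forcing $k$ to be bounded. Completeness of $Y$ then gives that $Y$ is boundedly compact, hence separable. Fix a countable dense set $D \subseteq Y$; a standard diagonal argument using bounded compactness of $X$ yields a subsequence $n_\ell$ along which $G_{n_\ell}(y)$ converges in $X$ for every $y \in D$. The pointwise limit has zero distortion and sends $q$ to $p$, so extends by uniform continuity and completeness of $X$ to an isometric embedding $\bar G \colon Y \to X$ with $\bar G(q) = p$.

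The main obstacle is surjectivity of $\bar G$. Given $x \in X$, set $y_n := F_n(x) \in Y$; by the distortion bound and $F_n(p) = q$, the $y_n$ lie in a bounded ball of $Y$, so along a further subsequence $y_{n_\ell} \to y \in Y$ (using bounded compactness of $Y$ established above). On one hand, the near-identity estimate for $G_n \circ F_n$ gives $G_{n_\ell}(y_{n_\ell}) \to x$; on the other, a three-term estimate using some $y' \in D$ close to $y$, the vanishing distortion of $G_{n_\ell}$, and the isometry of $\bar G$ restricted to $D$ shows $G_{n_\ell}(y_{n_\ell}) \to \bar G(y)$. Hence $\bar G(y) = x$, so $\bar G$ is the required pointed isometry of $(Y, q)$ onto $(X, p)$, completing the proof.
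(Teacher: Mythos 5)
The paper does not actually prove Theorem \ref{ed_km}: it is stated with a pointer to \cite{dan,smj}, so there is no internal proof to compare against. Your argument is the standard uniqueness-of-Gromov--Hausdorff-limits proof transplanted to quasimetrics, and it appears sound. The two places where the quasimetric setting genuinely demands care are both handled correctly, if implicitly: first, Definition \ref{km_conv_nc} imposes condition (3) only in the direction $g_{n,r}\circ f_{n,r}\approx\operatorname{id}_{X_n}$, and you correctly recover the missing direction $f_{n,r}\circ g_{n,r}\approx\operatorname{id}_X$ from condition (3) together with $\operatorname{dis}(g_{n,r})<\delta_n$ (this is exactly what makes both your density claim in Part 1 and your $G_n\circ F_n\approx\operatorname{id}_X$ estimate in Part 2 work); second, since a quasimetric is only assumed upper semicontinuous in the first argument, all limit arguments must be run through inequalities rather than continuity of $d$, and your three-term estimates use only the generalized triangle inequality with the fixed constants $Q_X$, $Q_Y$, $Q_{X_n}\leq C$, so they survive. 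Two minor points you should make explicit in a full write-up: the compositions $G_n=f^X_n\circ g^Y_n$ require choosing the radius in the $X$-convergence data slightly larger than $r$ so that the image of $g^Y_n$ lands in the domain of $f^X_n$; and the diagonal extraction must be run over an exhaustion $r\to\infty$ as well as over $D$, since each $f_{n,r}$ is only defined on a ball. Neither is a gap, just bookkeeping.
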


The tangent cone is then defined as usual:

\begin{Definition}\label{cone_def} Let $X$ be a boundedly compact
(quasi)metric space, $p\in X$.
If the limit of pointed spaces
$\lim\limits_{\lambda\to\infty}(\lambda X,p)=(T_pX,e)$ (in the sense
of definition \ref{km_conv_nc}) exists, then
 $T_p X$ is called the {\it tangent cone} to $X$ at $p$.
Here $\lambda X=(X,\lambda\cdot d_X)$; the symbol
$\lim\limits_{\lambda\to\infty} (\lambda X,p)$ means that, for any
sequence $\lambda_n\to\infty$, there exists
$\lim\limits_{\lambda_n\to\infty}(\lambda_n X,p)$ which is
 independent of the choice of sequence $\lambda_n\to\infty$ as
 $n\to\infty$.

 {\it A local tangent cone} is an arbitrary neighborhood
 $U(e)\subseteq T_pX$ of fixed point $e\in T_pX$.

\end{Definition}

\begin{Remark}According to Theorem \ref{ed_km},  the tangent cone from
Definition
\ref{cone_def} is unique up to
isometry, i.~e. one should treat it  as a
 class of pointed quasimetric spaces isometric to each other. Note also that
  the tangent cone is isometric to $(\lambda T_pX,e)$ for all
$\lambda>0$ and is completely defined by any (arbitrarily small) neighborhood
  of the point.
 \end{Remark}

\begin{Theorem}\label{cc_teor}
Let $\mathbb M$ be a C-C space from Definition \ref{cc-space}.
Then the quasimetric space  $(U,\rho^u)$ is a local tangent cone at the point $u$ to the quasimetric space $(U,\rho)$, where  the quasimetrics $\rho$ and $\rho^u$
are defined by  \eqref{rho} and \eqref{rho_u}, respectively. The tangent cone is a homogeneous space $G/H$, constructed in the proof of the Proposition
\ref{lift} (here $G$ is a nilpotent graded group).
\end{Theorem}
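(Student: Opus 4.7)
The plan is to verify the convergence of pointed quasimetric spaces $(U, u, \lambda\rho) \to (U, u, \rho^u)$ in the sense of Definition \ref{km_conv_nc}, using the dilations $\Delta^u_\varepsilon$ from \eqref{dil} as the comparison maps, and then to identify the limit $(U, u, \rho^u)$ with the homogeneous space $G/H$ by invoking the lifting construction of Proposition \ref{lift}.

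For the convergence, I would define, for each $\lambda > 0$ large enough and each $r$ with $r/\lambda$ small, the maps $f_{\lambda, r}(v) = \Delta^u_\lambda v$ and $g_{\lambda, r}(v) = \Delta^u_{1/\lambda} v$, which fix $u$ and satisfy $g_{\lambda,r}\circ f_{\lambda,r} = \mathrm{id}$ on their common domain, so conditions (1) and (3) of Definition \ref{km_conv_nc} hold trivially. The distortion estimate is the substance of the proof. For $v, w \in B^{\lambda\rho}(u, r+\delta_\lambda)$, one has $\rho(u,v), \rho(u,w) = O(1/\lambda)$, and by the conical property \eqref{cone_prop} of $\rho^u$,
$$\bigl|\rho^u(f_{\lambda,r}v, f_{\lambda,r}w) - \lambda\rho(v,w)\bigr| = \lambda\bigl|\rho^u(v,w) - \rho(v,w)\bigr|.$$
By the Local Approximation Theorem \ref{lat_quasim} applied with $\varepsilon = O(1/\lambda)$, the right-hand side is $\lambda \cdot O(\lambda^{-1-1/M}) = O(\lambda^{-1/M})$. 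Setting $\delta_\lambda := C\lambda^{-1/M}$ for a suitable constant $C$, the distortion of $f_{\lambda,r}$ (and by a symmetric argument that of $g_{\lambda,r}$) is less than $\delta_\lambda$, and $f_{\lambda,r}, g_{\lambda,r}$ do map the required balls into $U$ because the Local Approximation Theorem also guarantees $\rho$ and $\rho^u$ are comparable on these small scales.

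To identify the limit space with $G/H$, I rely on the construction carried out in the proof of Proposition \ref{lift}. The nilpotent Lie algebra $\operatorname{Lie}\{\widehat{X}_1^u,\ldots,\widehat{X}_q^u\}$ exponentiates to a graded nilpotent Lie group $G$ carrying the dilations \eqref{dil_lift}; the isotropy subgroup $H = \{g \in G : g(u)=u\}$ is connected and dilation-invariant, and the orbit map $\varphi_u: G/H \to U$, $Hg \mapsto g(u)$, is a diffeomorphism onto a neighborhood of $u$. Under $\varphi_u$, the vector fields $\widehat{X}_i^u$ correspond to left-invariant vector fields on $G/H$, so the quasimetric $\rho^u$ defined by \eqref{rho_u} coincides with the left-$G$-invariant quasimetric on $G/H$ induced by the projections of these vector fields, which is homogeneous of degree one under the dilations induced on $G/H$. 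Thus $(U, u, \rho^u) \cong (G/H, e, \rho^u_{G/H})$ as pointed quasimetric spaces, and the tangent cone inherits the graded weight structure from $G$.

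The main obstacle I anticipate is the compatibility between the ``abstract'' ball conditions in Definition \ref{km_conv_nc} and the fact that $U$ is merely a chart: a priori the images $\Delta^u_\lambda v$ or the balls $B^{\rho^u}(u, r + 2\delta_\lambda)$ need not sit inside the domain where $\rho^u$ is defined. This is resolved by passing to a \emph{local} tangent cone, as permitted by Definition \ref{cone_def}: by shrinking $r$ we may arrange that every point involved lies in an arbitrarily small neighborhood of $u$, where all previous results (the conical property, Proposition \ref{treug_ineq}, Theorem \ref{lat_quasim}, and the diffeomorphism $\varphi_u$) simultaneously apply. Once this has been arranged, Theorem \ref{ed_km} ensures that the limit is well-defined up to isometry, completing the identification.
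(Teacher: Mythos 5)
Your proposal is correct and follows essentially the same route as the paper: the comparison maps are the dilations $\Delta^u_{\lambda}$ and $\Delta^u_{1/\lambda}$, the distortion is controlled by combining the conical property \eqref{cone_prop} with the Local Approximation Theorem \ref{lat_quasim}, and the identification of the limit with $G/H$ rests on the lifting construction of Proposition \ref{lift} and the left-invariance of $\rho^u$ under the $G$-action (which the paper verifies explicitly by transporting the defining curves of $\rho^u$ along $g$, exactly as your appeal to the correspondence between $\widehat{X}^u_i$ and left-invariant fields implies). Your explicit distortion estimate $\lambda\left|\rho^u(v,w)-\rho(v,w)\right|=O(\lambda^{-1/M})$ and the remark about restricting to a local tangent cone merely spell out details the paper leaves implicit.
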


\begin{proof}
We have to verify Definition \ref{cone_def} for the spaces $X_n=(U,u,\lambda_n\cdot \rho)$, $X=(U,u,\rho^u)$, where
$\lambda_n\to\infty,\ \lambda_n\geq 0$ is an arbitrary sequence of reals (w.l.o.g. we assume
$\lambda_n\geq 1$). It is sufficient to take $$f_{n_r}=\Delta_{\lambda_n}^u,\
g_{n,r}=\Delta_{\lambda_n^{-1}}^u. $$ Due to the conical property
\eqref{cone_prop} and Theorem \ref{lat_quasim} we have the first assertion.

To verify the second assertion, we have to verify the left-invariance of $\rho^u$, i.e. to prove that
\begin{equation}\label{levoinv}\rho^u(g(v),g(w))=\rho^u(v,w),\end{equation}
  where  $g$ is defined in Proposition \ref{lift}.

Consider a curve $\gamma(t)$ such that
$$\begin{cases}\dot{\gamma}(t)=\sum\limits_{|I|_h\leq M}b_I
\widehat{X}^u_I(\gamma(t)),\\
\gamma(0)=v, \gamma(1)=w.
\end{cases}$$
Due to the left-invariance of the vector fields
$\{\widehat{\tilde{X^{\prime}}}^u_I\}_{|I|_h\leq M}$, introduced
in the proof of Proposition \ref{lift}, and the existence of the
homomorphism
$\Psi(\widehat{\tilde{X^{\prime}}}^u_I)=\widehat{X^{\prime}}^u_I$,
the curve $\gamma_g(t)=g(\gamma(t))$ is a solution of the system
of equations
$$\begin{cases}\dot{\gamma_g}(t)=\sum\limits_{|I|_h\leq M}b_I
\widehat{X}^u_I(\gamma_g(t)),\\
\gamma(0)=g(v), \gamma(1)=g(w).
\end{cases}$$
By definition of the quasimetric $\rho^u$, we get the required assertion.
\end{proof}

\begin{Corollary}At a regular point, the tangent cone to a weighted
C-C space is a nilpotent graded group.\end{Corollary}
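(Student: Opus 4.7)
The approach is to show that at a regular point the isotropy subgroup $H$ from the construction of Proposition~\ref{lift} is trivial (equivalently, normal and reducing the dimension from $\tilde{N}$ down to $N$), so that the quotient $G/H$ from Theorem~\ref{cc_teor} is itself a nilpotent graded Lie group. Equivalently, I plan to identify the tangent cone $(U,\rho^u)$ directly with the local Lie group $\mathcal{G}^u$ of Definition~\ref{loc_carno}, which is visibly a nilpotent graded group carrying the quasimetric $\rho^u$.

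\textbf{Main steps.} First, I would invoke Proposition~\ref{treug_prop_reg} to note that at a regular point $u$ the structure identity $[\widehat{Y}_i,\widehat{Y}_j]=\sum c_{ijk}\widehat{Y}_k$ holds in a whole neighborhood of $u$, not only at $u$ itself. In particular the Lie algebra
$$L=\operatorname{Lie}\{\widehat{X}^u_1,\dots,\widehat{X}^u_q\}$$
defined in Corollary~\ref{alg_prop} has dimension exactly $N$, with basis $\{\widehat{Y}^u_1,\dots,\widehat{Y}^u_N\}$, and it integrates to the local nilpotent graded Lie group $\mathcal{G}^u=(U,\star)$ of Definition~\ref{loc_carno}. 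Second, recall from the proof of Proposition~\ref{lift} the surjective homomorphism $\Psi:\mathcal{N}\to L$ with $\Psi(\widehat{\tilde{X}}^{\prime}_I)=\widehat{X}^u_I$, and the identification (\ref{H_alg}) of the isotropy Lie algebra $\mathfrak{h}=\{Z\in\mathcal{N}:\Psi(Z)(u)=0\}$. I want to show that at a regular point $\mathfrak{h}=\ker\Psi$: indeed, the vector fields in $L$ are left-invariant with respect to the group structure on $\mathcal{G}^u$ whose identity is $u$, so evaluation at $u$ is an isomorphism $L\to T_u\mathbb{M}$; hence $\Psi(Z)(u)=0$ forces $\Psi(Z)=0$ in $L$. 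Third, since $\ker\Psi$ is a graded ideal of $\mathcal{N}$, the subgroup $H=\exp(\mathfrak{h})$ is normal in $G$ and invariant under the dilations \eqref{dil_lift}; the quotient $G/H$ is therefore a connected nilpotent Lie group of dimension $N$ with graded Lie algebra $\mathcal{N}/\ker\Psi\cong L=V_1\oplus\dots\oplus V_M$. Fourth, the isomorphism $\varphi_u:G/H\to U$ from the proof of Proposition~\ref{lift}, combined with Theorem~\ref{cc_teor}, transports the group structure and the homogeneous quasimetric from $G/H$ to $(U,\rho^u)$, giving the claim.

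\textbf{The main obstacle} is justifying the injectivity of the evaluation map $L\to T_u\mathbb{M}$, $Z\mapsto Z(u)$, at a regular point. This rests on the fact that in a neighborhood of a regular point the commutator relations have constant coefficients and the basis \eqref{bas_hat} can be chosen uniformly, so that $L$ is genuinely realized as the Lie algebra of left-invariant vector fields of the local Carnot group $\mathcal{G}^u$; once this is in hand, normality of $H$ and the identification of the tangent cone with a nilpotent graded Lie group become routine. All other ingredients---the conical property \eqref{cone_prop}, the graded structure inherited from the dilations \eqref{dil_lift}, and uniqueness of the tangent cone from Theorem~\ref{ed_km}---follow immediately from results already established in the paper.
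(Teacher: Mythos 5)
Your proposal is correct and follows the route the paper intends: the paper states this corollary without proof, as an immediate consequence of Theorem \ref{cc_teor} together with the regular-point theory (Theorem \ref{al}, Definition \ref{loc_carno}), which is exactly what you make explicit by showing that at a regular point the evaluation map $L\to T_u\mathbb M$ is injective, hence the isotropy algebra $\mathcal H$ equals $\ker\Psi$, $H$ is normal, and $G/H$ is the nilpotent graded group $\mathcal G^u$ carrying $\rho^u$. The one step you flag as the main obstacle is indeed supplied by Theorem \ref{al} (the structure constants $c_{ijk}(u)$ define an $N$-dimensional graded nilpotent algebra of left-invariant fields near a regular point), so there is no gap.
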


\section{The case of H\"{o}rmander vector fields}

\begin{Definition}\label{horm_usl} The vector fields
$\{X_1,\ldots,X_m\}\in C^{p}$ on $U\subseteq\mathbb M$, $m\leq N$,
meet H\"ormander's condition of depth $M$, if they span, by their
commutators up to the order $M-1$, the whole tangent space
$T_u\mathbb M$ at any point $u\in U$, and $M$ is the minimal
number with such property.
 \end{Definition}

Obviously, for the case of regular points, $\mathbb M$ is an
example of a Carnot manifold, see Definition \ref{cc-manifold}. In
this paper we assume that $p=2M+1$.

The homogeneous degree of the vector field $X_I$ is now equal to
its commutator order
$$\text{deg}(X_I)=\text{degalg}(X_I)=|I|=i_1+\ldots+i_k,$$
and the conditions (ii) and (iii) for the basis \eqref{bas}
coincide. Introduce the same local coordinates on $U$ as in
\eqref{loc_coord} and construct the nilpotent approximations
$\{\widehat{X}_I^u\}_{|I|\leq M}$, as in Proposition
\ref{vf_decomp}. The lifting construction is also carried out in a
similar way as before, see Proposition \ref{lift}. Here we have
$q=m$ and the Lie group of the free algebra ${\cal N}$ is a Carnot
group. These constructions and results of \cite{vk} for regular
points allow to prove an analog of the Rashevsky-Chow theorem for
spaces from Definition \ref{horm_usl}. This result is, however,
not new, in particular, the existence of $d_c$ for the case when
$p=M-1,\alpha$ was proved in \cite{bram} with other methods.

\begin{Theorem}\label{chow_nereg}
On $U$ there are finite metrics
\begin{equation}\label{dcu}d_c(v,w)=
\inf\limits_{\substack{\dot{\gamma}\in
H\mathbb M\\
\gamma(0)=v,\gamma(1)=w}}\{L(\gamma)\}\quad\text{ and }\quad
d_c^u(v,w)=\inf\limits_{\substack{\dot{\widehat{\gamma}}\in
\widehat{H}^u\mathbb M  \\
\widehat{\gamma}(0)=v,\widehat{\gamma}(1)=w}}\{L(\widehat{\gamma})\}.
\end{equation}\end{Theorem}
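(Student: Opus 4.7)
The plan is to reduce everything to the regular case via the lifting construction of Proposition~\ref{lift} and then transport the classical Rashevsky-Chow result for regular Carnot manifolds back through the projection $\pi$. Since the only thing the theorem asserts is finiteness of the infima in \eqref{dcu}, it suffices to produce, for any $v,w$ in some neighborhood of $u$, at least one horizontal curve of finite length joining them.

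First I would apply Proposition~\ref{lift} with $q=m$ to the H\"ormander vector fields $X_1,\ldots,X_m$, obtaining free vector fields $\tilde X_1,\ldots,\tilde X_m$ on $\tilde U\subseteq\tilde{\mathbb M}$ of the same depth $M$. By Remark~\ref{svob_reg} all points of some sub-neighborhood $\tilde V\subseteq\tilde U$ are regular, so $(\tilde V,\tilde X_1,\ldots,\tilde X_m)$ is a Carnot manifold in the sense of Definition~\ref{cc-manifold}. The Rashevsky-Chow theorem for regular Carnot manifolds, proved in \cite{karm,vk} as a consequence of Theorem~\ref{int_lines_perem}, therefore gives a finite intrinsic metric $\tilde d_c$ on $\tilde V$. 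Given $v,w$ near $u$, I take $\tilde v=(v,0)$, $\tilde w=(w,0)\in\tilde V$, choose a horizontal curve $\tilde\gamma:[0,1]\to\tilde V$ with $\dot{\tilde\gamma}=\sum_{i=1}^m a_i(t)\tilde X_i(\tilde\gamma)$ realizing $\tilde d_c(\tilde v,\tilde w)$ up to $\zeta>0$, and project $\gamma:=\pi\circ\tilde\gamma$. By \eqref{lift_vf} the horizontal part of $\tilde X_i$ is precisely $X_i$, so $\dot\gamma=\sum a_i(t) X_i(\gamma)$; thus $\gamma$ is a horizontal curve in $U$ joining $v$ to $w$. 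Equipping $\mathbb M$ and $\tilde{\mathbb M}$ with Riemannian metrics that make $\{X_i\}$ and $\{\tilde X_i\}$ orthonormal frames of their respective horizontal distributions, the same coefficients $a_i(t)$ govern both curves, whence $L(\gamma)\leq L(\tilde\gamma)<\infty$ and consequently $d_c(v,w)\leq\tilde d_c(\tilde v,\tilde w)<\infty$. The remaining metric axioms (positivity, symmetry, triangle inequality) are standard: positivity follows because a horizontal curve of zero length is constant, symmetry from reversibility of horizontal paths, and the triangle inequality from concatenation.

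For the nilpotentized metric $d_c^u$ I would run the same argument on $\{\widehat X_i^u\}_{i=1}^m$. By Corollary~\ref{alg_prop} we have $\widehat H_M(u)=H_M(u)=T_u\mathbb M$, so these fields satisfy H\"ormander's condition of depth $M$ at $u$; applying Proposition~\ref{lift} to them produces free nilpotent approximations $\widehat{\tilde X}_i$ on the same $\tilde U$ (this is exactly the second system in \eqref{lift_vf}), and the lifted C-C space is again regular by Remark~\ref{svob_reg}. The identical projection-of-horizontal-curves argument then gives a finite $d_c^u$. Alternatively, one can invoke the homogeneous-space description $G/H$ from Theorem~\ref{cc_teor}: since $G$ is a nilpotent graded group generated, via commutators, by the horizontal left-invariant fields, the classical Carnot-group Rashevsky-Chow theorem applies on $G$, and left-invariance together with the surjective submersion $G\to G/H\cong U$ transports horizontal connectivity to $(U,d_c^u)$.

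The step I expect to be the main obstacle is the length-comparison $L(\gamma)\leq L(\tilde\gamma)$: one must pick the Riemannian metrics on $\mathbb M$ and $\tilde{\mathbb M}$ compatibly so that $\pi$ is a Riemannian submersion of the horizontal subbundles. This is essentially automatic from the block structure in \eqref{lift_vf}, which shows $D\pi\,\langle\tilde X_i\rangle=X_i$, but must be checked carefully because at a nonregular point $u$ the number of relations among the $X_I$ jumps, and one has to verify that the compatibility of horizontal frames survives after the fiberwise identification. Once this is handled, all other ingredients are either already stated in the excerpt or are routine.
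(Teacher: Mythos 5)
Your proposal is correct and follows essentially the same route as the paper: lift via Proposition \ref{lift}, invoke regularity of the lifted space (Remark \ref{svob_reg}) and the Rashevsky--Chow theorem of \cite{karm,vk} to get finite metrics $\tilde d_c$, $\tilde d_c^{\tilde u}$ upstairs, then project horizontal curves through $\pi$ using the block structure of \eqref{lift_vf}. The only differences are cosmetic — the paper takes a geodesic upstairs where you take a near-minimizer, and it leaves the length comparison and metric axioms implicit where you spell them out.
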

\begin{proof}
Consider the manifold $\tilde{\mathbb M}$ and the vector fields
$\tilde{X}_i,\widehat{\tilde{X}}_i$ constructed in Proposition
\ref{lift}. Due to Remark \ref{svob_reg} and to the results of
\cite{vk} for regular points, on the neighborhood $\tilde{U}$
there are finite metrics $\tilde{d}_c$ and
$\tilde{d_c}^{\tilde{u}}$, defined by the horizontal vector fields
$\tilde{X}_i$ and $\widehat{\tilde{X}}_i$, respectively.

Denote as $\pi:\tilde{\mathbb M}\to\mathbb M$ the canonical
projection, i. e. $\pi(v,z)=v$, where $v\in\mathbb M$,
$z\in\mathbb R^{\tilde{N}-N}$.

Assume $\tilde{\gamma}(t):[0,1]\to\tilde{U}$ be a geodesic of the
distance $\tilde{d_c}((v,0),(w,0))$. Consider the curve
$\gamma:[0,1]\to U$ defined as $\gamma(t)=\pi
(\tilde{\gamma}(t))$. Then, in coordinates \eqref{loc_coord}, we
have
\begin{equation}\label{lift_eq}\begin{cases}
\dot{\tilde{\gamma}}(t)=\sum\limits_{i=1}^ma_i(t)
\tilde{X}_i(\tilde{\gamma}(t))=\sum\limits_{i=1}^ma_i(t)
\left[X_i(\gamma(t))
+\sum\limits_{i=N+1}^{\tilde{N}}b_{ij}(\tilde{\gamma}(t))
\frac{\partial}{\partial
z_j}\right],
\\
\tilde{\gamma}(0)=(v,0), \tilde{\gamma}(0)=(w,0),
\end{cases}
\end{equation}
hence the curve $\gamma(t)$ connects the points $v,w\in U$ and is
horizontal w. r. t. the vector fields $X_1,\ldots, X_m$.

The proof for \eqref{dcu} is carried out in a similar way, with
help of the existence of the metric $\tilde{d_c}^{\tilde{u}}$.
\end{proof}

Since the vector fields $\{\widehat{X}_i\}$ are homogeneous of
order $-1$, the metric \eqref{dcu} meets the conical property:
\begin{equation}\label{cone_dcu}d_c^u(\Delta_{\varepsilon}^uv,\Delta_
{\varepsilon}^uw)= \varepsilon d_c^u(v,w).\end{equation}

The next two propositions are proved in the same way as in the
``classical'' $C^{\infty}$-smooth case \cite{rs,bel,jean}; we
write down the proofs for the convenience of the reader.
\begin{Proposition}\label{proj}
The projections of the balls w. r. t. the metric $\tilde{d}_c$
onto the initial neighborhood $U\subseteq\mathbb M$ coincide with
the balls w. r. t. the metric $d_c$, i. e.
\begin{equation}\label{ball_proj}
B^{d_c}(v,r)=\pi\left(B^{\tilde d_c}((u,z),r)\right),
\end{equation}
where $u\in U$, $z\in\mathbb R^{\tilde{N}-N}$, $\pi:\tilde{\mathbb
M}\to\mathbb M$ is a canonical projection $\pi(v,z)=v$.
\end{Proposition}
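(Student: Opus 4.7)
The plan is to prove both inclusions of the set equality \eqref{ball_proj} by transferring horizontal curves between $U$ and $\tilde U$ via $\pi$. The key input is the relation $\pi_{*}\langle \tilde X_i\rangle = X_i$ from Proposition \ref{lift} (formula \eqref{lift_vf}): projection carries horizontal vectors in $\tilde U$ onto horizontal vectors in $U$ with the \emph{same} coefficients. In particular, if the sub-Riemannian length on $\mathbb M$ is defined by declaring $\{X_1,\dots,X_m\}$ an orthonormal frame of $H\mathbb M$, and similarly $\{\tilde X_1,\dots,\tilde X_m\}$ on $\tilde U$, then horizontal curves with identical control coefficients $a_i(t)$ on the two spaces have identical lengths. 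Using this observation, the two inclusions essentially become lifting and projecting of horizontal curves.

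For the inclusion $B^{d_c}(v,r)\subseteq \pi(B^{\tilde d_c}((v,z),r))$, I take $w\in B^{d_c}(v,r)$ and, for arbitrary $\zeta>0$, pick a horizontal curve $\gamma:[0,1]\to U$ with $\gamma(0)=v$, $\gamma(1)=w$, $\dot\gamma(t)=\sum_{i=1}^m a_i(t)X_i(\gamma(t))$ and $L(\gamma)<r+\zeta$ (existence of such a curve follows from Theorem \ref{chow_nereg}). I then solve the lifted ODE
\begin{equation*}
\dot{\tilde\gamma}(t)=\sum_{i=1}^m a_i(t)\tilde X_i(\tilde\gamma(t)),\quad \tilde\gamma(0)=(v,z),
\end{equation*}
which gives a curve $\tilde\gamma\subseteq\tilde U$ (shrinking $\tilde U$ if necessary). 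Because $\pi_*\tilde X_i=X_i$ and ODE solutions are unique, $\pi\circ\tilde\gamma=\gamma$, so $\tilde\gamma(1)=(w,z')$ for some $z'$, and $L(\tilde\gamma)=L(\gamma)<r+\zeta$. Letting $\zeta\to 0$ shows $(w,z')\in B^{\tilde d_c}((v,z),r)$, hence $w\in\pi(B^{\tilde d_c}((v,z),r))$.

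For the reverse inclusion $\pi(B^{\tilde d_c}((v,z),r))\subseteq B^{d_c}(v,r)$, I take any $(w,z')\in B^{\tilde d_c}((v,z),r)$ and pick a horizontal curve $\tilde\gamma$ in $\tilde U$ with $\tilde\gamma(0)=(v,z)$, $\tilde\gamma(1)=(w,z')$, and $L(\tilde\gamma)<r$. Its projection $\gamma=\pi\circ\tilde\gamma$ satisfies $\gamma(0)=v$, $\gamma(1)=w$, and is horizontal in $U$ with the same control coefficients $a_i(t)$, exactly as in \eqref{lift_eq}. Therefore $L(\gamma)=L(\tilde\gamma)<r$, so $w\in B^{d_c}(v,r)$.

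The main technical point — really the only non-formal one — is a careful choice of Riemannian structures on $\mathbb M$ and $\tilde{\mathbb M}$ making the projection horizontally isometric (equivalently, using the intrinsic sub-Riemannian length defined by the frames $\{X_i\}$ and $\{\tilde X_i\}$). Once this is set up, both directions are routine ODE arguments combined with the identity $\pi_*\tilde X_i=X_i$. A minor subtlety is that the lifted curve stays inside the domain $\tilde U$ of Proposition \ref{lift}, which is handled by shrinking $r$ (or $\tilde U$) so that the whole ball $B^{\tilde d_c}((u,z),r)$ lies in the region where \eqref{lift_vf} holds; this is precisely the local character of the statement.
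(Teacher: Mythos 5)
Your proposal is correct and follows essentially the same route as the paper: both directions are obtained by projecting and lifting horizontal curves with identical controls $a_i(t)$, using $\pi_*\tilde X_i=X_i$ and equality of lengths (the paper phrases the lift as solving the Cauchy problem for the extra components $\tilde\gamma_{N+j}$, which is the same ODE you solve). The only cosmetic slip is the ``letting $\zeta\to 0$'' step, since the endpoint $(w,z')$ depends on $\zeta$; it is cleaner to fix $\zeta<r-d_c(v,w)$ at the outset so that $L(\gamma)<r$ and the lifted endpoint lands directly in the open ball.
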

\begin{proof}
Let $\tilde{\gamma}(t):[0,1]\to\tilde{U}$ be any horizontal curve
starting from $(v,z)$. Then
\begin{equation}\label{lift_eq}\begin{cases}
\dot{\tilde{\gamma}}(t)=\sum\limits_{i=1}^ma_i(t)
\xi_i(\tilde{\gamma}(t))=\sum\limits_{i=1}^ma_i(t)\left[X_i(\pi
(\tilde{\gamma}(t))+\sum\limits_{i=N+1}^{\tilde{N}}b_{ij}
(\tilde{\gamma}(t))\frac{\partial}{\partial z_j}\right],
\\
\tilde{\gamma}(0)=(v,z).
\end{cases}
\end{equation}
Denote
\begin{equation}\label{proj_curve}\gamma(t)=\pi(\tilde{\gamma}(t)),
\end{equation}
then
\begin{equation}\label{tilde}\tilde{\gamma}(t)=\left(\begin{array}{c}
\gamma(t)\\---
\\\tilde{\gamma}_{N+1}(t)\\
\tilde{\gamma}_{N+2}(t)\\ \ldots \\
\tilde{\gamma}_{N+p}(t)\end{array}\right)\end{equation} and
\begin{equation}\label{lift_eq}\begin{cases}
\dot{\gamma}(t)=\sum\limits_{i=1}^ma_i(t)X_i(\gamma(t)),\\
\gamma(0)=v,
\end{cases}
\end{equation}
i. e. the curve $\gamma(t)=\pi(\tilde{\gamma}(t))$ is horizontal
w. r. t. the vector fields $X_1,X_2,\ldots, X_m$ and is of the
same length as $\tilde{\gamma}(t)$, i. e. the projections of
horizontal curves on $\tilde{\mathbb M}$ are horizontal curves on
$\mathbb M$.

Conversely, if $\gamma(t)$ is a horizontal curve on $\mathbb M$, a
horizontal curve $\tilde{\gamma}(t)$ on $\tilde{\mathbb M}$ can be
defined in such way that \eqref{proj_curve} holds. Indeed, it is
sufficient to define $\tilde{\gamma}(t)$ by \eqref{tilde}, where
the last $\tilde{N}-N$ components are computed as the solutions of
the Cauchy problem
$$\begin{cases}\dot{\tilde{\gamma}}_{N+j}(t)=\sum\limits_{i=1}^m
a_i(t)b_{ij}(\tilde{\gamma}(t)),\\
\tilde{\gamma}_{N+j}(0)=z_j.\end{cases}$$ In this way, the set of
horizontal curves on $\mathbb M$ coincides with the set of
projections of the horizontal curves on $\tilde{\mathbb M}$, hence
the equality of balls \eqref{ball_proj} is true.
\end{proof}

\begin{Proposition}\label{ineq_prop} The projection $\pi$ is
distance-decreasing, i. e. for any points $v,w\in U$,
$p,q\in\mathbb R^{\tilde{N}-N}$ the following inequalities hold:
\begin{equation}\label{ineq}d_c(v,w)\leq \tilde{d}_c((v,p),(w,q)),
\end{equation}
\begin{equation}\label{ineq_u}d^u_c(v,w)\leq \tilde{d}_c^u((v,p),(w,q)).
\end{equation}
\end{Proposition}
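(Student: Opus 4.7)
The plan is to deduce both inequalities directly from Proposition \ref{proj}, since the distance-decreasing property is essentially a corollary of the fact that horizontal lifts preserve length.

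First I would prove \eqref{ineq}. Fix $\zeta>0$ and choose a horizontal curve $\tilde{\gamma}:[0,1]\to\tilde{U}$ with $\tilde{\gamma}(0)=(v,p)$, $\tilde{\gamma}(1)=(w,q)$ and
\[
L(\tilde{\gamma})\leq \tilde{d}_c((v,p),(w,q))+\zeta.
\]
Since $\tilde{\gamma}$ is horizontal, it satisfies
$\dot{\tilde{\gamma}}(t)=\sum_{i=1}^m a_i(t)\tilde{X}_i(\tilde{\gamma}(t))$ for some controls $a_i(t)$. By the construction in the proof of Proposition \ref{proj}, the projected curve $\gamma(t)=\pi(\tilde{\gamma}(t))$ joins $v$ to $w$ and satisfies $\dot{\gamma}(t)=\sum_{i=1}^m a_i(t)X_i(\gamma(t))$, so it is a horizontal curve in $\mathbb{M}$ with respect to $\{X_1,\dots,X_m\}$. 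With the natural choice of sub-Riemannian metric declaring the $X_i$ (resp.\ $\tilde{X}_i$) orthonormal, the length elements coincide, so $L(\gamma)=L(\tilde{\gamma})$. Hence
\[
d_c(v,w)\leq L(\gamma)=L(\tilde{\gamma})\leq \tilde{d}_c((v,p),(w,q))+\zeta,
\]
and letting $\zeta\to 0$ yields \eqref{ineq}.

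The proof of \eqref{ineq_u} is word-for-word the same, replacing $\tilde{X}_i$ by $\widehat{\tilde{X}}_i$ and $X_i$ by $\widehat{X}_i^u$. The key observation is that the decomposition
\[
\widehat{\tilde{X}}_k(y,z)=\widehat{X}_k^u(y)+\sum_{j=N+1}^{\tilde{N}}b_{kj}(y,z)\frac{\partial}{\partial z_j}
\]
from Proposition \ref{lift} has the same structure as that for the $\tilde{X}_k$, so the projection argument works identically; the projected curve has the same controls $a_i(t)$ and hence the same length, and we conclude $d_c^u(v,w)\leq \tilde{d}_c^{\tilde{u}}((v,p),(w,q))$.

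The main (and only) point requiring care is the choice of Riemannian structures on $\mathbb{M}$ and $\tilde{\mathbb{M}}$ used to define the length of horizontal curves: one must ensure that the extra coordinates $z_{N+1},\dots,z_{\tilde{N}}$ on $\tilde{\mathbb{M}}$ are treated so that the sub-Riemannian norms of $\dot{\tilde{\gamma}}$ and $\dot{\gamma}=\pi_*\dot{\tilde{\gamma}}$ agree, which is automatic under the convention (standard in \cite{rs,bel,jean,cnsw}) that $\tilde{X}_i$ and $X_i$ are both orthonormal frames of the respective horizontal distributions. Once this convention is fixed, no further work is needed.
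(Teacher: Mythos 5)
Your argument is correct and is essentially the paper's own: the paper deduces both inequalities by noting that $\tilde{w}\in\bar{B}^{\tilde{d}_c}(\tilde{v},r)$ with $r=\tilde{d}_c(\tilde{v},\tilde{w})$ and then applying Proposition \ref{proj}, whose proof is exactly the curve-projection argument you spell out (a horizontal curve upstairs projects to a horizontal curve downstairs with the same controls and hence the same length). You have merely inlined the content of Proposition \ref{proj} instead of citing its ball-equality statement, and your remark about fixing the convention that the $X_i$ and $\tilde{X}_i$ are orthonormal frames is a legitimate point the paper leaves implicit.
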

\begin{proof}
Denote $\tilde{v}=(v,p)$, $\tilde{w}=(w,q)$,
$r=\tilde{d_c}(\tilde{v}, \tilde{w})$. Obviously,
$\tilde{w}\in\bar{B}^{\tilde{d_c}}(\tilde{v},r)$. Since
$w=\pi(\tilde{w})$, then  $w\in\bar{B}^{d_c}(v,r)$ due to
Proposition \ref{proj}, from where \eqref{ineq} follows. The
inequality \eqref{ineq_u} is proved in a similar way.
\end{proof}

The sketch of proof of the next theorem is similar to the proof of
its analog in \cite{bel}; the main difference lies in the method
of proof of the divergence of integral lines. In particular, we do
not need special polynomial ``privileged'' coordinates (though the
second-order coordinates, as well as coordinates constructed in
Proposition \ref{lift} are privileged as well) and do not use
Newton-type approximation methods.

\begin{Theorem}[\rm Local approximation theorem]\label{lat}
For the points $u,v,w\in U$, such that $d_c(u,v)=O(\varepsilon)$
and $d_c(u,w)=O(\varepsilon)$, the following estimate is true
$$|d_c(v,w)-d_c^u(v,w)|=O(\varepsilon^{1+\frac{1}{M}}).$$
\end{Theorem}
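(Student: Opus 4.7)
The plan is to reduce the nonregular statement to the known regular case (Theorem \ref{lat_dc}) via the lifting construction of Proposition \ref{lift}, using the distance-decreasing projection and the lifting of horizontal curves from the proof of Proposition \ref{proj}. Because all points of a small neighborhood $\tilde{V}\subseteq\tilde{U}$ are regular (Remark \ref{svob_reg}), the results of \cite{vk,vk1}, and in particular Theorem \ref{lat_dc}, apply directly upstairs. The target inequality then follows by carefully choosing lifts of $v$ and $w$ so that the lifted CC-distance matches $d_c(v,w)$ up to an arbitrarily small error.

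Concretely, I would prove the two one-sided estimates separately. Fix $\tilde{u}=(u,0)\in\tilde{U}$ and $\zeta>0$. By Proposition \ref{proj} applied around $\tilde{u}$, one can lift $v$ to some $\tilde{v}$ with $\tilde{d}_c(\tilde{u},\tilde{v})\le d_c(u,v)+\zeta=O(\varepsilon)$. For the estimate $d_c^u(v,w)\le d_c(v,w)+O(\varepsilon^{1+1/M})$, I would take a nearly-minimizing horizontal curve $\gamma$ in $U$ from $v$ to $w$ of length $\le d_c(v,w)+\zeta$ and lift it horizontally starting from $\tilde{v}$, exactly as in the second half of the proof of Proposition \ref{proj}; its endpoint is some $\tilde{w}$ projecting to $w$ with $\tilde{d}_c(\tilde{v},\tilde{w})\le d_c(v,w)+\zeta$. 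The generalized triangle inequality upstairs then gives $\tilde{d}_c(\tilde{u},\tilde{w})=O(\varepsilon)$, so Theorem \ref{lat_dc} applies at $\tilde{u}$ and yields
\[
\bigl|\tilde{d}_c(\tilde{v},\tilde{w})-\tilde{d}_c^{\tilde{u}}(\tilde{v},\tilde{w})\bigr|
=O(\varepsilon^{1+\frac{1}{M}}).
\]
Combining with the distance-decreasing inequality $d_c^u(v,w)\le\tilde{d}_c^{\tilde{u}}(\tilde{v},\tilde{w})$ from Proposition \ref{ineq_prop} gives $d_c^u(v,w)\le d_c(v,w)+O(\varepsilon^{1+1/M})+\zeta$, and letting $\zeta\to0$ finishes this direction.

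For the reverse inequality $d_c(v,w)\le d_c^u(v,w)+O(\varepsilon^{1+1/M})$ I would run the mirror argument on the nilpotentized side: now lift a nearly-minimizing $\widehat{X}^u$-horizontal curve from $v$ to $w$ to an $\widehat{\tilde{X}}$-horizontal curve starting at $\tilde{v}$, obtaining a lift $\tilde{w}$ with $\tilde{d}_c^{\tilde{u}}(\tilde{v},\tilde{w})\le d_c^u(v,w)+\zeta$, and again invoke Theorem \ref{lat_dc} together with $d_c(v,w)\le\tilde{d}_c(\tilde{v},\tilde{w})$. The one technical point to verify is that the lifting-of-horizontal-curves argument of Proposition \ref{proj} goes through unchanged for the nilpotentized vector fields $\widehat{\tilde{X}}_k=\widehat{X}_k^u+\sum b_{kj}\partial_{z_j}$ produced in Proposition \ref{lift}, which is clear because those fields have exactly the same block structure as the $\tilde{X}_k$.

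The main obstacle, and essentially the only substantive step, is verifying that the two lifts $\tilde{v},\tilde{w}$ used in each direction simultaneously satisfy the hypothesis of Theorem \ref{lat_dc} (i.e. $\tilde{d}_c(\tilde{u},\tilde{v})$ and $\tilde{d}_c(\tilde{u},\tilde{w})$ are both $O(\varepsilon)$) and realize the downstairs distance up to $\zeta$. This is handled by first fixing $\tilde{v}$ using Proposition \ref{proj} applied to the ball around $\tilde{u}$, and then letting $\tilde{w}$ be the endpoint of the lifted nearly-geodesic starting from $\tilde{v}$; the triangle inequality in the regular space $\tilde{U}$ (which holds by \cite{vk}) then controls $\tilde{d}_c(\tilde{u},\tilde{w})$ automatically. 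Everything else is bookkeeping with the two inequalities from Proposition \ref{ineq_prop} and the quantitative bound from Theorem \ref{lat_dc}.
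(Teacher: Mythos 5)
Your proof is essentially correct but follows a genuinely different route from the paper's. The paper works downstairs: it takes a $d_c$-geodesic $\gamma$ from $v$ to $w$, runs the \emph{same controls} through the nilpotentized fields to get a companion curve $\widehat{\gamma}$ with endpoint $\widehat{w}$, notes $L(\gamma)=L(\widehat{\gamma})+O(\varepsilon^2)$, and then sandwiches $d_c(v,w)$ and $d_c^u(v,w)$ using the error term $\max\{d_c(w,\widehat{w}),d_c^u(w,\widehat{w})\}$, which is controlled by the divergence-of-integral-lines estimate (Theorem \ref{int_lines_dc}) together with the projection inequalities \eqref{ineq}. You instead never build the companion curve: you transport the whole statement upstairs by lifting $v$, $w$ and near-minimizing curves to the regular space $\tilde{U}$, invoke the already-known regular Local Approximation Theorem (Theorem \ref{lat_dc}) there, and project back with Proposition \ref{ineq_prop}. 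Since Theorem \ref{lat_dc} is itself derived from Theorem \ref{int_lines_dc} in \cite{vk1}, your argument sits one level higher in the dependency chain; what it buys is that you avoid the endpoint-comparison bookkeeping, at the cost of having to check that the lifting of horizontal curves also works for the nilpotentized fields $\widehat{\tilde{X}}_k$ and that the nilpotentization upstairs at $\tilde{u}$ is the lift of the nilpotentization downstairs at $u$ (both of which Proposition \ref{lift} does give you).

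One step needs more care than you allot it. In the second direction you lift a near-minimizing $\widehat{X}^u$-horizontal curve, which gives you control on $\tilde{d}_c^{\tilde{u}}(\tilde{v},\tilde{w})$, not on $\tilde{d}_c(\tilde{v},\tilde{w})$; so the plain triangle inequality for $\tilde{d}_c$ does \emph{not} by itself put $\tilde{w}$ into $B^{\tilde{d}_c}(\tilde{u},C\varepsilon)$, which is the hypothesis of Theorem \ref{lat_dc}. You must first convert the bound $\tilde{d}_c^{\tilde{u}}(\tilde{v},\tilde{w})=O(\varepsilon)$ into $\tilde{d}_c(\tilde{v},\tilde{w})=O(\varepsilon)$ using the local equivalence of $\tilde{d}_c$ and $\tilde{d}_c^{\tilde{u}}$ near the regular point $\tilde{u}$ (available from the Ball--Box comparison in \cite{vk,vk1}). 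This is a fixable gap, not a wrong turn, but it is not ``automatic from the triangle inequality'' as stated.
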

\begin{proof}

Let $\gamma:[0,1]\to\mathbb M$ be a geodesic for the distance
$d_c$, i. e.
$$\begin{cases}\dot{\gamma}(t)=\sum\limits_{i=1}^ma_i(t)X_i(\gamma(t)),\\
\gamma(0)=v,\ \gamma(1)=w
\end{cases}
$$
and $L(\gamma)=d_c(v,w)$. Consider a curve $\widehat{\gamma}(t)$
such that
$$\begin{cases}\dot{\widehat{\gamma}}(t)=\sum\limits_{i=1}^ma_i(t)
\widehat{X
}_i(\widehat{\gamma}(t)),\\
\widehat{\gamma}(0)=v
\end{cases}
$$ and denote $\widehat{w}=\widehat{\gamma}(1)$. Note that the lengths
of the curves $\gamma$ and $\widehat{\gamma}$ differ on a value of
order $O(\varepsilon^2)$ \cite{vk1}. Consequently,
$$d_c(v,w)= L(\gamma)= L(\widehat{\gamma})+O(\varepsilon^2)\geq
d_c^u(v,\widehat{w})\geq
d_c^u(v,w)-d_c^u(w,\widehat{w})+O(\varepsilon^2).$$ In a similar
way,
$$d_c^u(v,w)\geq d_c(v,w)-d_c(w,\widehat{w})+O(\varepsilon^2).$$
Taking in account Theorem \ref{int_lines_dc} and the estimates
\eqref{ineq} we get the required assertion.
\end{proof}

The following tangent cone result is proved in a similar way as
Theorem \ref{cc_teor} with the help of Theorem \ref{lat_dc} and
the homogeneity of the vector fields $\widehat{X}_i^u$.

\begin{Theorem}[\rm\cite{izv}]\label{cc_teor}
The metric space   $(U,d_c^g)$  is a local tangent cone at $g$ to
the metric space $({\cal U},d_c)$. The tangent cone has the
structure of a homogeneous space $G/H$, where $G$ is a  Carnot
group.

If $u$ is a regular point, the tangent cone is isomorphic to a
Carnot group.
\end{Theorem}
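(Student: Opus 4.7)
The plan is to follow almost verbatim the proof of the earlier Theorem~\ref{cc_teor} for quasimetrics, substituting the intrinsic metric $d_c$ for $\rho$, the nilpotent metric $d_c^g$ for $\rho^g$, and the metric local approximation Theorem~\ref{lat_dc} in place of Theorem~\ref{lat_quasim}. Existence of $d_c$ and $d_c^g$ is guaranteed by Theorem~\ref{chow_nereg}, and both the dilations $\Delta^g_\varepsilon$ and the conical identity \eqref{cone_dcu} carry over unchanged from the quasimetric setting.

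First I would verify Definition~\ref{cone_def} for the sequence $X_n = (U, g, \lambda_n d_c)$ converging to $X = (U, g, d_c^g)$, where $\lambda_n \to \infty$ is an arbitrary sequence. Taking $f_{n,r} = \Delta^g_{\lambda_n}$ and $g_{n,r} = \Delta^g_{\lambda_n^{-1}}$, the conical property \eqref{cone_dcu} gives $d_c^g(f_{n,r}(v), f_{n,r}(w)) = \lambda_n d_c^g(v,w)$. For $v,w$ in the appropriate ball one has $d_c(g,v), d_c(g,w) = O(\lambda_n^{-1})$, so Theorem~\ref{lat_dc} yields
$$|\lambda_n d_c(v,w) - d_c^g(f_{n,r}(v), f_{n,r}(w))| = \lambda_n \bigl|d_c(v,w) - d_c^g(v,w)\bigr| = O(\lambda_n^{-1/M}),$$
which tends to zero uniformly. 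The symmetric estimates for $g_{n,r}$ and for the composition $g_{n,r} \circ f_{n,r}$ follow in the same way, giving convergence with $\delta_n = O(\lambda_n^{-1/M})$. Since the sequence $\lambda_n$ is arbitrary, Theorem~\ref{ed_km} implies that the tangent cone is well-defined up to isometry and coincides with $(U, g, d_c^g)$.

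For the homogeneous-space description I would invoke the construction of Proposition~\ref{lift}: under the H\"ormander condition $d_i = 1$ for all $i$, so the associated free nilpotent algebra exponentiates to a \emph{Carnot} group $G$, the isotropy subgroup $H = \{h \in G \mid h(g) = g\}$ is closed and connected, and the orbit map $\varphi_g : G/H \to U$ is a diffeomorphism identifying $\widehat{X}_i^g$ with the pushforward of the left-invariant horizontal fields on $G$ via the surjective homomorphism $\Psi$. To conclude that $d_c^g$ has the structure of a $G/H$-metric I would check left-invariance: if $\widehat{\gamma}$ is a horizontal curve joining $v$ and $w$, then $h \cdot \widehat{\gamma}$ satisfies the analogous horizontal ODE with the same control coefficients, hence has the same length, which gives $d_c^g(h(v), h(w)) = d_c^g(v,w)$ for every $h \in G$. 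When $u$ is regular, $\dim \operatorname{Lie}\{\widehat{X}_i^u\} = N$, so the projection $G \to G/H$ collapses precisely the kernel of $\Psi$; the quotient then inherits a genuine Carnot group structure, yielding the last assertion.

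The step I expect to be most delicate is the length-preservation in the left-invariance argument: unlike the quasimetric case where covariance of ODEs essentially suffices, for $d_c^g$ one must verify that the infimum defining the length functional is preserved under the $G$-action. This amounts to tracking horizontal curves through the chain $G \to G/H \to U$ via $\varphi_g$ and checking that left-translation on $G$ descends to a $d_c^g$-isometry of $U$ fixing $g$; the covariance of horizontal fields under $\Psi$ established in Proposition~\ref{lift} is exactly what enables this, but the bookkeeping across the three spaces requires care.
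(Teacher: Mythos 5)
Your proposal is correct and follows essentially the same route as the paper, which itself only remarks that the result "is proved in a similar way as the quasimetric tangent cone theorem with the help of Theorem \ref{lat_dc} and the homogeneity of the vector fields $\widehat{X}_i^u$" --- i.e., the dilations $\Delta^g_{\lambda_n}$ as the comparison maps, the conical property, the local approximation theorem for $d_c$, and the $G/H$ structure from the lifting construction. Your write-up merely makes explicit the distortion estimate $O(\lambda_n^{-1/M})$ and the left-invariance of $d_c^g$, which is exactly what the paper's cross-reference intends.
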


\end{document}